\newcommand{\R}{\mathbb{R}}
\newcommand{\E}{\mathbb{E}}
\newcommand{\N}{\mathbb{N}}
\renewcommand{\P}{\mathbb P}
\newcommand{\Pb}{\mathbb{P}}
\newcommand{\al}{\alpha}
\newcommand{\be}{\beta}
\newcommand{\ga}{\gamma}
\newcommand{\si}{\sigma}
\newcommand{\Si}{\Sigma}
\newcommand{\de}{\delta}
\newcommand{\f}{\mathcal F}
\newcommand{\Indi}[1]{\mathbbm{1}_{#1}}
\newcommand{\rev}{\color{black}}
\newcommand{\proba}{(\Omega ,\mathcal{F},(\f_t)_{t\geq0},\P)}
\newcommand{\prob}{(\Omega ,\mathcal{F},\P)}
\newcommand{\probp}{(\Omega' ,\mathcal{F}',\P')}
\newcommand{\toop}{\stackrel{\P}{\longrightarrow}}
\newcommand{\schw}{\stackrel{\raisebox{-1pt}{\textup{\tiny d}}}{\longrightarrow}}
\newcommand{\fidi}{\stackrel{\mathcal{G}-\textup{\tiny st}}{\longrightarrow}}
\newcommand{\lan}{\langle}
\newcommand{\ran}{\rangle}
\newcommand{\bee}{\begin{equation}}
\newcommand{\eee}{\end{equation}}
\newcommand{\beea}{\begin{array}}
\newcommand{\eeea}{\end{array}}
\renewcommand{\theequation}{\arabic{section}.\arabic{equation}}
\theoremstyle{plain}
\newtheorem{prop}{Proposition}[section]
\newtheorem{theo}[prop]{Theorem}
\newtheorem{lemma}[prop]{Lemma}
\theoremstyle{definition}
\newtheorem{rem}[prop]{Remark}
\newcommand{\revch}{\color{black}}
\begin{document}

\title{Quantitative and stable limits of high-frequency statistics of L\'evy processes: a Stein's method approach
\thanks{The authors
gratefully acknowledge financial support of ERC Consolidator Grant 815703
``STAMFORD: Statistical Methods for High Dimensional Diffusions''.}} 
\author{Chiara Amorino\thanks{Department
of Mathematics, University of Luxembourg, 
E-mail: chiara.amorino@uni.lu.}  \and
Arturo  Jaramillo\thanks{Center of Research in Mathematics (CIMAT), Guanajuato, 
E-mail: jagil@cimat.mx.} \and
Mark Podolskij\thanks{Department
of Mathematics, University of Luxembourg,
E-mail: mark.podolskij@uni.lu.}}

\maketitle
\begin{abstract}
We establish inequalities for assessing the distance between the distribution of errors of partially observed high-frequency statistics of multidimensional L\'evy processes and that of a mixed Gaussian random variable. Furthermore, we provide a general result guaranteeing stable  convergence. Our arguments rely on a suitable adaptation of the Stein's method perspective to the context of mixed Gaussian distributions, specifically tailored to the framework of high-frequency statistics.\\

\noindent
 \textit{Keywords: High frequency data, L\'evy process, mixed normal distribution, Stein's method} 
 
\noindent
\textit{AMS 2010 subject classifications: Primary 62E17, 60F05 secondary 60G51}
\end{abstract}

%\end{frontmatter}

\section{Introduction} \label{sec1}
\setcounter{equation}{0}
\renewcommand{\theequation}{\thesection.\arabic{equation}}
The aim of this paper is to establish a collection of mathematical techniques, allowing one to describe  mixed Gaussian convergences and tightly assess the associated marginal distance, within the context of high-frequency statistics of L\'evy processes. More precisely, given a  $d$-dimensional L\'evy process  $\bm{X}=(\bm{X}_t)_{t\geq 0}$  of the form $\bm{X}_t =(X_t^{(1)},\dots,X_t^{(d)})$, defined on a filtered probability space $\proba$,  we consider the high-frequency statistics

\begin{align}\label{eq:statfirstapp G}
G_{t}^{(n)}
  &:=\sum_{i=1}^{\lfloor nt \rfloor} g\left( \bm{X}_{\frac{i-1}{n}} ,
\mathcal{{I}}_{i,n}\right),
\end{align}
for $t\geq 0$, $g:\R^{d(m + 2)}\rightarrow\R$ and 
\begin{align}\label{eq:Indeffirst}
\mathcal{I}_{i,n}
  :=\left( a_n(\bm{X}_{\frac{i}{n}}-\bm{X}_{\frac{i-1}{n}}) , a_n(\bm{X}_{\frac{i + 1}{n}}-\bm{X}_{\frac{i}{n}}) , \dots , a_n( \bm{X}_{\frac{i + m}{n}}-\bm{X}_{\frac{i + m-1}{n}})\right)
\end{align}
for $m\geq 0$ and some suitable normalization constant $a_n$.  The monitoring of any given time-evolving system is typically composed of an entirely observable component which is progressively revealed and a random perturbation. Such high-frequency statistics are very popular in e.g. mathematical finance, where $\bm{X}$ represents a price process. For more specific examples of functionals of type \eqref{eq:statfirstapp G}, we refer to estimation of local times of $\al$-stable L\'evy processes \cite{AJP22}, (weighted) characteristic functions \cite{MR3495691} and (weighted) empirical distributed functions \cite{MR3226166}.  
We recommend the monograph \cite{JP12} for a comprehensive study of high-frequency statistics of It\^o semimartingales.  
From the martingale theory perspective the natural centring 
of $g( \bm{X}_{\frac{i-1}{n}},\mathcal{I}_{i,n})$ is given by conditional expectation 
$\E[g( \bm{X}_{\frac{i-1}{n}},\mathcal{I}_{i,n})\ |\ \mathcal{F}_{(i-1)/n}]$. Thus, the quantity of interest becomes the statistic
\begin{align*}
E_{t}^{(n)}
  &:=\sum_{i=1}^{\lfloor nt \rfloor} \left(g ( \bm{X}_{\frac{i-1}{n}} ,
\mathcal{I}_{i,n} ) - \E[g ( \bm{X}_{\frac{i-1}{n}} ,
\mathcal{I}_{i,n})\ |\ \mathcal{F}_{\frac{i-1}{n}}] \right).
\end{align*}
The description of the asymptotic behavior of $E_{t}^{(n)}$ is the main object of our study, with emphasis on %the following two particular topics 
%\begin{enumerate}
%\item[-] Finding conditions for guaranteeing a functional convergence in law for a suitably normalization of $(E_{t}^{(n)})_{t\geq 0}$ under the topology of convergence over compact sets.
%\item[-] 
determining bounds for rate of convergence of a suitable probabilistic distance between the underlying marginal distributions and their limit.
%\end{enumerate}

In order to tackle this problem, we extend the ideas of Stein's method, developed by Charles Stein in his fundamental work \cite{ChStein}, into a framework  designed ad-hoc to the problem described above, which will allow us to both determine asymptotic mixed normality and assess the associated rate of convergence. It is worth observing that a universal Stein's method for mixed Gaussian limits constitutes a virtually unexplored topic in the current literature. This is due to the flexibility of choosing a random variance in a mixed Gaussian limit approximation, which shatters the simplicity of its distribution and demands very precise knowledge of the sequence of distributions under consideration, information which is not available in most cases. Besides this technical challenge, a handful of special instances have been treated in the literature, among which we highlight the work on additive functionals of the Brownian motion by Papanicolau-Stroock-Varadhan Theorem (see e.g. \cite[Theorem XIII-(2.6) and Proposition XIII-(2.8)]{RY}), the subsequent extensions to the fractional Brownian motion case by Hu, Jaramillo, Nualart, Nourdin, Peccati and  Xu in \cite{HuNuXu,JaNoNuPE,NuXu}, and the work of Nourdin et.al. \cite{NoNu,NoNuPe}, where mixed Gaussian limits are studied in the context of variables in the Wiener chaos. In the continuous semimartingale setting Edgeworth expansions associated with mixed Gaussian limits and their applications have been investigated in \cite{PVY17,PVY20, PY16, Y13}, with Malliavin calculus and martingale techniques being the main mathematical tools. Later the theoretical results have been extended to the framework of Skorohod integrals
in \cite{NY19}. 

Despite the aforementioned studies, 
a complete understanding of the mixed Gaussian phenomenology is far from being complete, as the entirety of the available results are valid only in settings where the underlying randomness emerges from Gaussian processes. The goal of this paper is to 
contribute to the completion of this theory. In particular, we provide a general {\rev criterion} 
for stable convergence towards a mixed normal limit  in Theorem \ref{stein}, which  relies on an adaptation of Stein's lemma and does not require the random variables to be 
embedded in a Gaussian space. The reach of the technique is illustrated by some remarks and example of applications, such as its implementation in the study of $E_{t}^{(n)}$. Our second main statement is Theorem \ref{th1}, which studies the rates of convergence associated with 
asymptotic mixed normality of the statistic $E_{t}^{(n)}$.  This result builds upon the classical ideas from the Lindeberg method \cite{Lindeberg} and the $K$-function approach \cite{CheGoShao}, although its implementation requires the incorporation of new ideas that could deal with the stochastic variance of the limit. 

The remainder of this paper is organized as follows. In Section \ref{sec1.1}, we introduce the basic setting under which we state our results and present some particular instances that illustrate (and motivate) the reach of the theory developed in the manuscript.  In Section \ref{sec1.3}, we present our main results, which include stable limit theorems and a quantitative evaluation of the marginal distribution of the processes under consideration. In Section \ref{s: proof stable}, we prove a general criterion for asymptotic mixed normality and the previously mentioned quantitative assessment. Additional technical results, along with their proofs, can be found in the Supplementary Material.

\subsection*{Notation}
All strictly positive constants are denoted by $C$ although they may change from line to line. We use the notation $\bm{i}=\sqrt{-1}$. We write $C^p(\R^l;\R^q)$ to denote the class of $p$ times continuously differentiable functions from $\R^l$ to $\R^q$. For $g\in C^1(\R^l;\R^q)$ we write $\nabla g$ for the derivative of $g$; for a map $(x,y)\in \R^{l}\times \R^{s} \mapsto \R^q$ we denote the partial derivatives by $\frac{\partial}{\partial x}g$ and $\frac{\partial}{\partial y}g$. When $g$ is a univariate function we denote its derivatives by $g',g'',\ldots$ or by $g^{(l)}$, $l\geq1$.   

The Euclidean (resp. sup) norm is denoted by $\|\cdot\|$ (resp. $\|\cdot\|_{\infty}$). We also define
the $\si$-algebra

\bee \label{sialg}
\mathcal{F}^{(i)} := \sigma \left({\bm{X}}_{\frac{k}{n}} - {\bm{X}}_{\frac{k-1}{n}}, {\bm{\tilde{X}}}_{s} - {\bm{\tilde{X}}}_{\frac{h-1}{n}};\, k \neq i, i+1, \ldots, i+m,\ h = i, i+1, \ldots, i+m,\ s \in \left[\tfrac{h-1}{n}, \tfrac{h}{n} \right] \right)
\eee
for $i\geq 1$, where $\tilde{X}$ is an independent but fixed copy of $X$.

\section{The setting and overview} \label{sec1.1}
\setcounter{equation}{0}
\renewcommand{\theequation}{\thesection.\arabic{equation}}

Consider a univariate L\'evy process $(X_t)_{t \ge 0}$ and let $\psi$ be its L\'evy exponent defined through
$$\E[\exp(\bm{i}\beta X_t)] = \exp(t\psi(\beta) ),  \qquad \be\in \R.$$
By the L\'evy-Khintchine formula, $\psi$ has the form
$$\psi(\beta) = \bm{i}\gamma \beta - \frac{1}{2} \sigma^2 \beta^2 + \int_{\R} \left(\exp(\bm{i} \beta x) - 1 - \bm{i} \beta x \Indi{\{|x| < 1\}}\right) \Pi (dx),$$
where $\gamma \in \R$, $\sigma \ge 0$ and $\Pi(dx)$ is a Radon measure on $[- \infty, 0) \cup (0, \infty)$ satisfying $\int_{\R} (x^2 \land 1) \Pi (dx) < \infty$.
As characteristic functions uniquely determine their underlying probability distributions, each L\'evy process is uniquely determined by the L\'evy-Khintchine triplet $(\gamma, \sigma, \Pi)$. Standard textbooks on the topic include \cite{Ber96,Kyp06,Sat13}. 

Throughout this work we consider a $d$-dimensional L\'evy process whose components $X_t^{(1)}, ... , X_t^{(d)}$ are iid and have L\'evy triplet $(\gamma, \sigma, \Pi)$. Our results will be stated in terms of a suitable smooth distance, which we define next. Let $\mathcal{H}$ be the collection of functions defined by	 
$$\mathcal{H} := \left \{ \psi: \mathbb{R} \rightarrow \mathbb{R} \mbox{ test function such that } \left \| \psi^{(l)} \right \|_\infty \le 1 \mbox{ for } l= 0,1, \dots,5 \right \}.$$
The family of functions gives raise to the smooth metric
\begin{align}\label{eq:defdistance}
d(\mu,\nu) := \sup_{\psi \in \mathcal{H}}\left|\int_{\R}\psi(x)\mu(dx) - \int_{\R}\psi(x)\nu(dx)\right|,
\end{align}
on the space of probability measures on $\R$. Observe that if $\mu_{n}$ is a sequence of probability measures satisfying $d(\mu_{n},\nu)\rightarrow0$ as $n$ tends to infinity, then the characteristic function of $\mu_{n}$ converges pointwise towards the characteristic function of  $\nu$. In particular, the metric $d$ induces a topology on the class of probability measures over $\R$ which is stronger than the topology of convergence in distribution.

Theorem \ref{th1} below reveals that the proper scaling of $E_{t}^{(n)}$ required for yielding an asymptotic non-trivial limit is  ${n}^{- \frac{1}{2}}$, reason for which the forthcoming results will be stated in terms of the following rescaling of $E_{t}^{(n)}$: 
\begin{align}\label{eq:statfirstapp}
Z_{t}^{(n)}
  &:=\frac{1}{\sqrt{n}}\sum_{i=1}^{\lfloor nt \rfloor} \left(g ( \bm{X}_{\frac{i-1}{n}} ,
\mathcal{I}_{i,n} ) - \E[g ( \bm{X}_{\frac{i-1}{n}} ,
\mathcal{I}_{i,n})\ |\ \mathcal{F}_{\frac{i-1}{n}}]\right).
\end{align}
%Recall that $\mathcal{I}_{i,n}$ is given by \eqref{eq:Indeffirst} for some suitable normalizing sequence $a_{n}$. 
The choice of $a_{n}$, which has been introduced in \eqref{eq:Indeffirst}, will be made in such a way that for every $\textbf{x}\in\mathbb{R}^d$ and $i\leq n$, the sequence $\text{Var}[g\left( \bm{x},\mathcal{I}_{i,n}\right)]$ converges as $n$ tends to infinity. Naturally, under suitable conditions over $g$, this convergence will automatically hold whenever $a_{n}$ is such that $a_{n}\textbf{X}_{1/n}$ converges in law. 

In addition to this, a number of adequate extra hypotheses will be needed, but before discussing them in detail we present some particular instances of our results in order to motivate the reader.\\

\noindent \textbf{A representative statement}\\
One of the main motivations of the present paper is to explore in full generality the asymptotic behavior of $Z_{t}^{(n)}$ for the case where $\textbf{X}$ is an $\alpha$-stable process with i.i.d. components and $\alpha\in(0,2)$. The case where $\textbf{X}$ is a Brownian motion is of interest as well, although it has been studied thoroughly by means of martingale techniques (see e.g. \cite{JP12}) in the case $m=0$. The martingale techniques in the case $m>0$ are not completely obvious and their level of complexity seems to be comparable to the Stein's approach presented in Theorem \ref{stein}.

As mentioned earlier, one of the methodological breakthroughs of our work is the successful implementation of Stein's method for high-frequency statistics converging towards a mixed Gaussian limit. Our main findings are stated in full generality in Theorem \ref{th1} and Theorem \ref{th2} 
below, and the following results are immediate consequences of such general statements. For this purpose, for a general L\'evy
process $\bm{X}$, we introduce {$3m+1$ independent} copies of $\bm{X}$ denoted $\Tilde{\bm{X}}^{(1 - m)}, ... ,\Tilde{\bm{X}}^{(2m)}$ and the limit as $n$ tends to infinity of the covariance of $g$. In particular, we introduce the functions $\mathfrak{g}_{n}:\R^d\rightarrow\R$ as
\begin{align}\label{eq:gnvariancedef}
\mathfrak{g}_n(\bm{x})
  &:=\sum_{j = -m}^{m} Cov \left(g (\bm{x}, a_n \Tilde{\bm{X}}_{\frac{1}{n}}^{(1)}, ... , a_n \Tilde{\bm{X}}_{\frac{1}{n}}^{(m+1)} ), g (\bm{x}, a_n \Tilde{\bm{X}}_{\frac{1}{n}}^{(j + 1)}, ... , a_n \Tilde{\bm{X}}_{\frac{1}{n}}^{(j + m+1)} ) \right).%\textcolor{red}{\Indi{m \ge 1}} %\\
  %& \textcolor{red}{+ Var(g (\bm{x}, a_n \Tilde{\bm{X}}_{\frac{1}{n}}^{(1)})) \Indi{m = 0}}.
\end{align}
In the sequel, we will assume the pointwise  existence of the limit $\lim_{n}\mathfrak{g}_{n}$, which is denoted by $\mathfrak{g}$; namely,
\begin{align}\label{eq:gvariancedef}
\mathfrak{g}(\bm{x}) &:=\lim_{n} \mathfrak{g}_n(\bm{x}).
\end{align}
Observe that in the particular case where $\bm{X}$ is {\rev an $\alpha$-stable L\'evy process, one naturally chose $a_n = n^{\frac{1}{\alpha}}$. Then,} the sequence $\mathfrak{g}_n(\bm{x})$ is a constant sequence over $n$, so in particular $\mathfrak{g}(\bm{x})=\mathfrak{g}_n(\bm{x})$ for all $n$.

\begin{theo}[Special case of Theorem \ref{th1}]\label{th1prev} 
Let $\bm{X}$ be a symmetric $\alpha$-stable process with $\alpha\in(0,2)$ and assume that $g$ is such that $\|g\|_{\infty},\|\frac{\partial g}{\partial x_{i}}\|_{\infty},\|\frac{\partial^2 g}{\partial x_{i}\partial x_{j}}\|_{\infty}<\infty,$  for all $i,j=1,\dots, d$. Then, the function  
\begin{align}\label{eq:Atdef}
 A_{s}
  := \mathfrak{g}(\bm{X}_{s})
\end{align}
is well defined. Under these conditions, for all $t>0$, there exists  a standard Brownian motion $W$ independent of $\bm{X}$, defined on an extended probability space, such that 
\begin{enumerate}
\item When $\alpha\in(1,2{)}$,
\begin{align}\label{eq:alpha12thm12}
d\left( Z_{t}^{(n)}, \int_0^{t}\sqrt{A_{s}}W(ds) \right)
  \le C n^{\frac{1}{2}-\frac{1}{\alpha}}.
\end{align}

\item When $\alpha=1$,
\bee
d\left( Z_{t}^{(n)}, \int_0^{t}\sqrt{A_{s}}W(ds)\right)
  \le \frac{C}{\sqrt{n}}\log(n).
\eee

\item When $\alpha\in(0,1)$,
\bee
d\left( Z_{t}^{(n)}, \int_0^{t}\sqrt{A_{s}}W(ds)\right)
  \le  \frac{C}{\sqrt{n}}.
\eee

\end{enumerate}
Moreover, if $g$ is symmetric over the second component, i.e. $g (\bm{x},\bm{y})=
g (\bm{x},-\bm{y})$, and $\alpha\in(1,2]$, then
\bee\label{simplifiedmainparttwogauss}
d\left( Z_{t}^{(n)}, \int_0^{t}\sqrt{A_{s}}W(ds)\right) \le \frac{C}{\sqrt{n}}.
\eee
\end{theo}

Although the above result is only a specialization of our main findings (which hold for general L\'evy processes), it illustrates quite accurately the general {spirit} of most of the results from this manuscript, as the parameter $\alpha$ (interpreted in the general framework as the shape parameter of the stable domain of attraction to which $\bm{X}$ belongs) dictates the different profiles for the bound on the rate of convergence of the distances under consideration.

\begin{rem} \rm \label{Jacodst}
We remark that the upper bound \eqref{eq:alpha12thm12} in Theorem  \ref{th1prev} does not converge to $0$ in the non-symmetric case if $\al=2$. Here we explain this phenomenon in the simple setting $m=0$ and $d=1$. 

In order to prove the asymptotic mixed normality of $Z^{(n)}$ for $m=0$ we can use Jacod's limit theorem established in   \cite{J97}. We briefly recall its statement specialised to our context. Let 
\bee
Y_t^n = \sum_{i=1}^{\lfloor nt \rfloor} \xi_i^n
\eee
be a sequence of random process with $\xi_i^n$ being $\mathcal{F}_{i/n}$-measurable, square integrable and satisfying $\E[\xi_i^n|\mathcal{F}_{(i-1)/n}]=0$. Assume that the following conditions are satisfied for any $t>0$:
\begin{align}
 &\label{condi11}\sum_{i=1}^{\lfloor nt \rfloor} \E[(\xi_i^n)^2|\mathcal{F}_{(i-1)/n}] \toop 
  \int_0^t A_s ds, \\[1.5 ex]
  &\label{condi12}\sum_{i=1}^{\lfloor nt \rfloor} \E[\xi_i^n (M_{i/n} - M_{(i-1)/n})|\mathcal{F}_{(i-1)/n}] 
  \toop 0, \\[1.5 ex]
   &\label{condi13}\sum_{i=1}^{\lfloor nt \rfloor} \E[(\xi_i^n)^2 1_{\{ |\xi_i^n|>\epsilon\}}|\mathcal{F}_{(i-1)/n}] \toop 0,
   \qquad \forall \epsilon>0,
\end{align}
where condition \eqref{condi12} holds for all square-integrable continuous martingales $(M_t)_{t \ge 0}$. Then it holds that $Y^n \stackrel{Law}{\rightarrow} Y$ on $D([0,T])$ equipped with the Skorokhod topology, where $Y_t = \int_0^t \sqrt{A_s}
W(ds)$, where $W$ is a Brownian motion independent of $\mathcal F$. It is easy to show conditions \eqref{condi11} and \eqref{condi13} for all $\al \in (0,2]$. Condition \eqref{condi12} holds for $\al \in (0,2)$, which is due to the fact that pure jump and continuous martingales are always \textit{orthogonal} in the sense of quadratic covariation (cf. \cite{AJP22}). On the other hand, when $X$ is a Brownian motion the second condition is not always true (even for $M=X$). In this case the statement  $Y^n \stackrel{Law}{\rightarrow} Y$ does not hold, which is consistent with the first result of Theorem  \ref{th1prev}
when $\al=2$.  However, in the symmetric case, condition \eqref{condi12} also holds for $\al=2$ and in this setting we do obtain a vanishing convergence rate   in Theorem  \ref{th1prev}. \qed
\end{rem}

%Besides the nature of the proofs, which are entirely different, it is important to remark the surprising fact that the condition of $g$ being symmetric is no longer needed for guaranteeing an asymptotic subordinated Gaussian limit, which allows for a considerably wider family of applications.

%\begin{theo}[Special case of Theorem \ref{th2}]\label{th2prev} 
%Let $\bm{X}$ be an $\alpha$-stable process with $\alpha\in(0,2)$ and assume that $g$ is such that $\|g\|_{\infty},\|\frac{\partial g}{\partial x_{i}}\|_{\infty},\|\frac{\partial^2 g}{\partial x_{i}\partial x_{j}}\|_{\infty}<\infty,$  for all $i,j=1,\dots, d$. Let $A_{t}$ be defined as in \eqref{eq:Atdef}. Then there exists a Brownian motion $W$ independent of $\bm{X}$ such that the process $Z^{(n)}=(Z_{t}^{(n)}\ ;\ t\geq0)$ satisfies 
%\begin{align}
%Z^{(n)}
 % &\stackrel{Law}{\rightarrow}(\int_0^t\sqrt{A_{s}}W(ds)\ ;\ t\geq0),
%\end{align}
%under the topology of convergence over compact sets. 
%\end{theo}

\noindent
Using Theorem \ref{eq:Atdef} as benchmark for the type of phenomenology to expect in a more general framework, we now introduce several technical conditions which will be used throughout the paper. \\

\noindent \textbf{Conditions on $g$}\\
\noindent We will assume that $g\in C^3(\R^{d}\times \R^{d(m + 1)};\R)$  with 
$$\|g\|_{\infty},\|\frac{\partial g}{\partial x_{i}}\|_{\infty},\|\frac{\partial^2 g}{\partial x_{i}\partial x_{j}}\|_{\infty}<\infty,$$
for all $i,j=1,\dots, d$. As mentioned before, we will assume the existence of the limit as $n$ tends to infinity of the term $\text{Var}[g(\bm{x},\mathcal{I}_{i,n})]$. \\

\noindent \textbf{Conditions on $\bm{X}$}\\
We will assume that there exists a constant $\alpha\in(0,2]$ and $\kappa>0$, such that the following behavior of the tail probabilities of $\bm{X}$ holds:
\begin{align}\label{eq:Xtails}
\Pb[|X_{t}^{(1)}| \geq s]
  &\leq\kappa ts^{-\alpha}.
\end{align}
In the sequel, the collection of conditions described above will be referred to as 'hypothesis' $\bm{H_1}(\alpha)$.\ \\ 

\noindent \textbf{Optional conditions for finer results}\\
The most general versions of our results are stated only under the validity of $\bm{H_1}(\alpha)$. However, sharper bounds on the distance of the approximations discussed below are possible under additional assumptions. In particular, some instances of Theorem \ref{th1} are stated under suitable compatibility between the function $g$ and the law of $\bm{X}$, which consists of a relaxation of the condition of symmetry on the  $\alpha$-stable case. We will refer to this condition as Hypothesis $\bm{H_2}(\alpha)$:\\

%\noindent \textit{Hypothesis }$\bm{H_1}$\\
%The terms $\text{Var}[g(\bm{x},a_n\bm{X}_{1/n})]$ are uniformly bounded from above and below, namely,
%\begin{align*}
%0 <\inf_{n\geq 1}\inf_{\bm{x}\in\R^d}\text{Var}[g(\bm{x},a_n\bm{X}_{1/n})]\leq \sup_{n\geq 1}\sup_{\bm{x}\in\R^d}\text{Var}[g(\bm{x},a_n\bm{X}_{1/n})]<\infty.
%\end{align*}
%\ \\

{\noindent \textit{Hypothesis }$\bm{H_2}(\alpha)$\\
{\rev When $\alpha\in(1,2]$ hypothesis $\bm{H_1}(\alpha)$ holds and additionally}, for every $\bm{x}\in\R^{d}$ and $n\in\mathbb{N}$, we have that 
\begin{align*}
\E[(g(\bm{x}, \mathcal{I}_{i,n})-\E[g(\bm{x}, \mathcal{I}_{i,n})]) {\mathcal{J}}_{i,n}^{(l)}]
  &=0,
\end{align*}}
for all $1\leq l\leq d$, where ${\mathcal{J}}_{i,n}^{(l)} ={X}^{(l)}_{\frac{i+m}{n}}-{X}^{(l)}_{\frac{i-1}{n}}$. \\ \\ %where we have denoted by $\tilde{\mathcal{I}}_{i,n}^{(l)}$ the quantity ${X}^{(l)}_{\frac{i}{n}}-{X}^{(l)}_{\frac{i-1}{n}}$.
A phase transition occurs as the value of $\alpha$ reaches the critical point $\alpha=2$, instance in which the process $\textbf{X}$   reaches the domain of attraction of a Gaussian distribution. It is common to observe that in this particular case, the process under consideration possesses moments of arbitrary order, which makes it natural for the following technical condition to be required for some special instances:\\

\noindent \textit{Hypothesis }$\bm{H_3}$\\
{\rev When $\alpha =2$, hypothesis $\bm{H_2}(2)$ holds.} In addition, there exists $\gamma>1$ such that 
\begin{align}\label{eq:Xtails2} 
\Pb[|X_{t}^{(1)}| \geq s]
  &\leq\kappa (ts^{-2})^{\gamma}.
\end{align}

\section{Main results} \label{sec1.3}
\setcounter{equation}{0}
\renewcommand{\theequation}{\thesection.\arabic{equation}}

In this section, we describe our main findings, whose presentation we split into sections addressing the functional behavior and quantitative estimates respectively.\\

\noindent\textbf{Asymptotic functional description of $Z^{(n)}$}\\
In the sequel, we will require  the notion of finite dimensional \textit{stable convergence}, which is originally due to Renyi \cite{R63}. Let $\mathcal G=\si(X_t: ~t\geq0)$ where $X$ is a L\'evy process with characteristic exponent as  described in Section \ref{sec1.1}. Let $r\geq 1$ be a given positive integer. For $\R^r$-valued 
random variables $(Y_n)_{n\geq 1}$ and
$Y$ we say that $Y_n$ converges $\mathcal G$-stably in law to $Y$, where $Y$ is defined on an extension  
$\probp$ of the original probability space $\prob$, if and only if for  any continuous bounded function $f:\R^r \to \R$ and any bounded
$\mathcal G$-measurable random variable $F$ it holds that
\bee \label{gstab}
\lim_{n \to \infty} \E[Ff(Y_n)] = \E'[Ff(Y)].
\eee
In this case we write $Y^n \fidi Y$.\\

 The next result not only gives a sufficient Stein type condition for asymptotic mixed normality, but also guarantees stable convergence. We demonstrate it in a rather general setting as it may have interest in its own right. Its proof is given in Section \ref{s: proof stable}. 

\begin{theo} \label{stein}
Let $\mathcal G$ be a sub-$\si$-algebra of $\mathcal F$. We consider $\R^r$-valued random variables $(S_n)_{n\geq 1}$  and a random diagonal matrix 
$\Si=\text{\rm diag}(V_1^2,\ldots,V_r^2)$, which is assumed to be $\mathcal G$-measurable. Furthermore, we assume that $\sup_n \E[\|S_n\|^2]<\infty$,
$\E[V_j^2]<\infty$ for all $j=1,\ldots,r$, and all $V_j^2$'s have no positive mass at $0$.  If the convergence 
\bee \label{condi}
\lim_{n\to \infty} \sum_{k=1}^r \left( \E\left[FS_n^k \frac{\partial}{\partial x_k} h(S_n)\right] - \E\left[F V_k^2 \frac{\partial^2}{\partial x_k^2} h(S_n)\right] \right) \to 0
\eee
holds for all bounded $\mathcal G$-measurable random variables $F$ and all functions $h\in C^2(\R^r;\R)$ with $\|\nabla h\|_{\infty} + \|\nabla^2 h\|_{\infty}<\infty$,
then we obtain the stable convergence
\bee \label{stabcon}
S_n \fidi S=\Si^{1/2} N,
\eee
where $N\sim \mathcal{N}_{r}(0,\text{id})$ is a standard $r$-dimensional normal variable defined on an extended space and independent of $\mathcal{G}$. In particular, $S$ has a mixed normal distribution with mean $0$ and conditional variance $\Si$. 
\end{theo}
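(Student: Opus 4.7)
The plan is to reduce the proof of \eqref{stabcon} to identifying the conditional characteristic function of any subsequential $\mathcal{G}$-stable limit of $(S_n)$. The assumption $\sup_n \E[\|S_n\|^2]<\infty$ ensures tightness of the laws of $(S_n)$, so by a standard extraction argument (see e.g.\ Jacod--Shiryaev) every subsequence admits a further subsequence converging $\mathcal{G}$-stably on an extension $\probp$ of the original space to some $\R^r$-valued limit $S_\infty$. By uniqueness of subsequential limits, it suffices to prove that the conditional law of $S_\infty$ given $\mathcal{G}$ is $\mathcal{N}_r(0,\Sigma)$; the full sequence will then converge stably to $\Sigma^{1/2}N$ as required.

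For a fixed $\lambda\in\R^r$ and $t\in\R$, I would apply hypothesis \eqref{condi} to the real test functions $h_1(x)=\cos(t\langle\lambda,x\rangle)$ and $h_2(x)=\sin(t\langle\lambda,x\rangle)$, which belong to $C^2(\R^r;\R)$ with first and second partial derivatives bounded in terms of $|t|$ and $\|\lambda\|$ only. Combining the two identities in complex form yields, for every bounded $\mathcal{G}$-measurable $F$,
\begin{equation*}
\lim_{n\to\infty}\left( it\, \E[F\langle\lambda,S_n\rangle e^{it\langle\lambda,S_n\rangle}] + t^2 \sum_{k=1}^{r} \lambda_k^2\, \E[F V_k^2 e^{it\langle\lambda,S_n\rangle}] \right)=0.
\end{equation*}

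Next, I would pass to the limit along the stably convergent subsequence. The first expectation converges to $\E'[F\langle\lambda,S_\infty\rangle e^{it\langle\lambda,S_\infty\rangle}]$ by stable convergence, using the boundedness of $F$ and of $e^{it\langle\lambda,\cdot\rangle}$, together with the uniform integrability of the components $S_n^k$ coming from the $L^2$-bound. The second expectation requires a mild truncation, since $FV_k^2$ is $\mathcal{G}$-measurable but merely $L^1$: splitting $FV_k^2=F(V_k^2\wedge M)+F(V_k^2-V_k^2\wedge M)$, applying stable convergence to the bounded factor and using $\E[V_k^2 \Indi{\{V_k^2>M\}}]\to 0$ uniformly in $n$, one obtains $\E[FV_k^2 e^{it\langle\lambda, S_n\rangle}]\to \E'[FV_k^2 e^{it\langle\lambda, S_\infty\rangle}]$. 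Conditioning on $\mathcal{G}$ and using the arbitrariness of $F$, the resulting family of a.s.\ identities reduces, for each $t\ne 0$, to the linear ODE
\begin{equation*}
\frac{d}{dt}\Lambda_\infty(t\lambda) = -t\Big(\sum_{k=1}^r \lambda_k^2 V_k^2\Big)\Lambda_\infty(t\lambda),\qquad \Lambda_\infty(0)=1,
\end{equation*}
for the conditional characteristic function $\Lambda_\infty(\mu):=\E'[e^{i\langle\mu,S_\infty\rangle}\mid\mathcal{G}]$. By uniqueness of solutions, $\Lambda_\infty(t\lambda)=\exp(-\tfrac{t^2}{2}\sum_{k=1}^r \lambda_k^2 V_k^2)$ almost surely, so at $t=1$ the conditional law of $S_\infty$ given $\mathcal{G}$ is $\mathcal{N}_r(0,\Sigma)$, as desired.

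The main obstacle is the passage to the limit: hypothesis \eqref{condi} features the unbounded random variables $S_n^k$ and $V_k^2$, while stable convergence in its raw form only controls bounded continuous test functions. The remedy is the truncation argument sketched above, enabled by $\sup_n \E[\|S_n\|^2]<\infty$ and $\E[V_k^2]<\infty$. A secondary but routine point is the justification of differentiation under the conditional expectation to identify $\frac{d}{dt}\Lambda_\infty(t\lambda)$ with $i\E'[\langle\lambda,S_\infty\rangle e^{it\langle\lambda,S_\infty\rangle}\mid\mathcal{G}]$; this relies on $\E'[\|S_\infty\|]<\infty$, which is inherited via Fatou's lemma from the uniform $L^2$-bound on $S_n$.
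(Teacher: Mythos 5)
Your proof is correct in substance, but it follows a genuinely different route from the paper. The paper argues directly, without any compactness step: it solves the Stein equation \eqref{Steineq} for a deterministic ``frozen'' variance matrix, discretizes the random variances $V_k^2$ into small intervals via the indicators $H_{{\bf i},q}$, and bounds $\E[Ff(S_n)]-\E'[Ff(S)]$ by three explicit error terms, the middle one being exactly the quantity controlled by \eqref{condi}; the hypothesis that the $V_j^2$ have no atom at $0$ is then needed to undo an initial truncation $V_k^2\in[\de,\de^{-1}]$. You instead extract a stably convergent subsequence by tightness, plug the complex exponentials into \eqref{condi}, and identify the limit through the ODE satisfied by the conditional characteristic function $\Lambda_\infty(t\la)$ --- essentially the characteristic-function strategy the authors themselves attribute to \cite{NR09} in their remark following the theorem. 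Your route is arguably cleaner in that it never needs the no-atom assumption (the degenerate conditional law on $\{V_k^2=0\}$ causes no harm to the ODE argument), whereas the paper's route is entirely self-contained and quantitative in spirit, requiring no selection theorem. The one point in your argument that deserves explicit care is the extraction step: tightness of $(S_n)$ alone yields a $\mathcal G$-stably convergent subsequence only when $L^1(\Om,\mathcal G,\P)$ is separable (equivalently, $\mathcal G$ is essentially countably generated), and the theorem is stated for an arbitrary sub-$\si$-algebra. This is repaired routinely --- for fixed bounded $\mathcal G$-measurable $F$ and test function $f$, run your argument with the countably generated $\si$-algebra $\si(F,V_1^2,\dots,V_r^2)$ in place of $\mathcal G$, for which \eqref{condi} still applies, and conclude by the subsequence principle --- but it should be said. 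The remaining technicalities you flag (uniform integrability from the $L^2$-bound, truncation of $V_k^2$, the $t$-dependent null sets handled via a regular conditional distribution and continuity in $t$, and differentiation under the conditional expectation) are all handled correctly.
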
 

\noindent
We remark that condition \eqref{condi} does not provide the rate of convergence towards the limit $S$ (in contrast to classical Stein's method). This is not surprising as stable convergence is not  metrizable in general. 

\begin{rem} \rm
The paper \cite{NR09} discusses another technique to show stable convergence of weighted functionals of fractional Brownian motion towards a mixed normal distribution, which is somewhat similar in spirit to the statement Theorem \ref{stein}. The authors consider a conditional characteristic function of their statistic and prove via Malliavin calculus that it asymptotically satisfies the differential equation determined by the conditional characteristic function of the limit.
\qed  
\end{rem}

\begin{rem} \rm
Theorem \ref{stein} has multiple applications, as \eqref{condi} provides a convenient condition for obtaining stable convergence in several settings. We briefly demonstrate an example 
of application of Theorem \ref{stein} in the Gaussian setting. The interested reader is referred to \cite{NP12} for a detailed discussion of the basic operators in the theory of Malliavin calculus, to be introduced next.\\

\noindent Consider  a univariate isonormal Gaussian process $X = \left \{ X(h): \, h \in \mathbb{H} \right \}$ associated with a real separable infinite-dimensional Hilbert space $\mathbb{H}$ and a sequence of Skorohod integrals $S_n = \delta(u_n)$ for some symmetric functions $u_n$ in the Sobolev space $\mathbb{D}^{2,2}(\mathbb{H})$.  The statement of  \cite[Theorem 3.1]{NoNu} gives
sufficient conditions that guarantee asymptotic mixed normality for the sequence $S_n$. More specifically, they show that conditions
\begin{align} \label{Mallcond1}
&\lan u_n, h \ran_{\mathbb H}  \xrightarrow{L^1}0 \qquad \text{for all } h \in \mathbb{H}, \\
& \label{Mallcond2}\lan u_n, DS_n \ran_{\mathbb H}  \xrightarrow{L^1} \Sigma,
\end{align}
where $D$ denotes the Malliavin derivative,
imply the stable convergence $S_n \fidi S=\Si^{1/2} N$, where $N\sim \mathcal{N}(0,1)$ is defined on an extended space and is independent of $\mathcal{G}$. This result easily follows
from our Theorem \ref{stein} via checking condition \eqref{condi}. 
Indeed, the integration by parts formula gives the identity
\bee
\E[FS_n h'(S_n)] = \E[F\de(u_n) h'(S_n)] =\E\left[\lan u_n, h'(S_n) DF + Fh''(S_n) DS_n \ran_{\mathbb{H}}\right].
\eee
For $F=g(X(h_1), \ldots, X(h_p))$ with $h_i \in \mathbb{H}$, 
$g\in C^{ 1}(\R^p;\R)$ and $\|g\|_{\infty} + 
\|\nabla g\|_{\infty}<\infty$, condition \eqref{Mallcond1} implies that 
\bee
\left|\E\left[\lan u_n, h'(S_n) DF  \ran_{\mathbb{H}}\right]\right| \leq \|h'\|_{\infty} 
\E\left[|\lan u_n, DF  \ran_{\mathbb{H}}|\right] \to 0.
\eee
By an approximation argument the above convergence holds for any bounded random variable 
$F$. 
On the other hand, via condition  \eqref{Mallcond2}, we conclude that 
\bee
\left| \E\left[Fh''(S_n) (\lan u_n,  DS_n \ran_{\mathbb{H}} - \Sigma)\right] \right|
\leq C \|h''\|_{\infty} \E\left[|\lan u_n,  DS_n \ran_{\mathbb{H}} - \Sigma|\right] \to 0.
\eee
Hence, Theorem \ref{stein} implies the stable convergence $S_n \fidi S=\Si^{1/2} N$. \qed
\end{rem}

Now we present a stable limit theorem for the statistic $Z^{(n)}$.

\begin{theo}\label{th2} 
Let $\mathfrak{g}_n$ and $\mathfrak{g}$ be defined by \eqref{eq:gnvariancedef} and \eqref{eq:gvariancedef}, respectively. Let $\bm{X}$ be a L\'evy process such that $\bm{H}_{1}(\alpha)$ holds for $\alpha\in(0,2)$. Assume that
\begin{equation}{\label{eq: var to zero}}
 \sup_{\bm{x}\in\R^{d}}|\mathfrak{g}_n(\bm{x})-\mathfrak{g}(\bm{x})| \rightarrow 0 
\end{equation}
for $n \rightarrow \infty$. Then, there exists a Brownian motion $W$ independent of $\bm{X}$ such that the process $Z^{(n)}=(Z_{t}^{(n)}\ ;\ t\geq0)$ satisfies 
\begin{align} \label{convfidi}
Z^{(n)} 
  &\fidi \Big(\int_0^t\sqrt{A_{s}}W(ds)\ ;\ t\geq0 \Big),
\end{align}
where $\{A_s\}_{s\geq 0}$ is given as in \eqref{eq:Atdef} and the convergence takes place under the topology of convergence of finite dimensional distributions. 
\end{theo}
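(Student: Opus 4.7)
The plan is to derive the finite-dimensional stable convergence from the general Stein-type criterion of Theorem \ref{stein}. Fix times $0=t_{0}<t_{1}<\cdots<t_{r}$, set $\mathcal{G}=\sigma(\bm{X}_{s}\ ;\ s\geq 0)$, and introduce the increment vector $S_{n}=(S_{n}^{1},\ldots,S_{n}^{r})$ with $S_{n}^{k}:=Z_{t_{k}}^{(n)}-Z_{t_{k-1}}^{(n)}$, together with the $\mathcal{G}$-measurable candidate variances $V_{k}^{2}:=\int_{t_{k-1}}^{t_{k}}\mathfrak{g}(\bm{X}_{s})\,ds$. Because the target process $\int_{0}^{\cdot}\sqrt{A_{s}}\,W(ds)$ has conditionally independent Gaussian increments, its increment vector is a centred mixed Gaussian with the diagonal covariance $\operatorname{diag}(V_{1}^{2},\ldots,V_{r}^{2})$, so it suffices to verify condition \eqref{condi} and combine Theorem \ref{stein} with the invertible linear change of coordinates from partial sums to increments in order to deduce \eqref{convfidi}.

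To verify \eqref{condi} we carry out a Lindeberg-type interpolation on the summands. Write $S_{n}^{k}=\sum_{i\in I_{k}}\delta_{i}$ with $I_{k}:=\{\lfloor nt_{k-1}\rfloor+1,\ldots,\lfloor nt_{k}\rfloor\}$ and
\[
\delta_{i}:=n^{-1/2}\!\left(g(\bm{X}_{(i-1)/n},\mathcal{I}_{i,n})-\E[g(\bm{X}_{(i-1)/n},\mathcal{I}_{i,n})\mid\mathcal{F}_{(i-1)/n}]\right).
\]
The pivotal identity $\E[\delta_{i}\mid\mathcal{F}^{(i)}]=0$ follows from the Lévy independent-increments structure: $\mathcal{I}_{i,n}$ is independent of $\mathcal{F}^{(i)}$ in \eqref{sialg}, while $\bm{X}_{(i-1)/n}$ is $\mathcal{F}^{(i)}$-measurable, so the conditional expectations of $g(\bm{X}_{(i-1)/n},\mathcal{I}_{i,n})$ given $\mathcal{F}^{(i)}$ and given $\mathcal{F}_{(i-1)/n}$ coincide. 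For each $i\in I_{k}$ introduce the $\mathcal{F}^{(i)}$-measurable surrogate $S_{n}^{(i)}$ obtained from $S_{n}$ by deleting the $2m+1$ summands $\delta_{j}$ with $|j-i|\leq m$ (these are exactly the ones depending on the excluded increments of $\mathcal{F}^{(i)}$), and Taylor expand
\[
\partial_{k}h(S_{n})=\partial_{k}h(S_{n}^{(i)})+\sum_{|j-i|\leq m}\delta_{j}\,\partial_{k}\partial_{k(j)}h(S_{n}^{(i)})+R_{i},
\]
where $k(j)$ is the coordinate index containing $j$. The Lipschitz property of $\nabla\partial_{k}h$ with constant $\|\nabla^{2}h\|_{\infty}$ yields $|R_{i}|\leq \tfrac{1}{2}\|\nabla^{2}h\|_{\infty}\|S_{n}-S_{n}^{(i)}\|^{2}=O(m^{2}/n)$. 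After multiplication by $F\delta_{i}$ and summation over $i$, the $R_{i}$-contribution is $O(m^{2}n^{-1/2})=o(1)$, while the quadratic term assembles, by conditioning on $\mathcal{F}^{(i)}$ and invoking the definition \eqref{eq:gnvariancedef} of $\mathfrak{g}_{n}$, into the Riemann sum $\frac{1}{n}\sum_{i\in I_{k}}\mathfrak{g}_{n}(\bm{X}_{(i-1)/n})\partial_{k}^{2}h(S_{n}^{(i)})$ up to $O(m/n)$ boundary corrections. By the uniform hypothesis \eqref{eq: var to zero}, càdlàg regularity of $\bm{X}$, and dominated convergence, this Riemann sum tends to $\E[FV_{k}^{2}\partial_{k}^{2}h(S_{n})]$, matching the target.

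The principal obstacle is the treatment of the linear Lindeberg contribution $\sum_{i\in I_{k}}\E[F\delta_{i}\partial_{k}h(S_{n}^{(i)})]$ when $F$ is merely bounded $\mathcal{G}$-measurable, since $F$ generically fails to be $\mathcal{F}^{(i)}$-measurable. The resolution decomposes $F=\E[F\mid\mathcal{F}^{(i)}]+(F-\E[F\mid\mathcal{F}^{(i)}])$: the first summand is annihilated by the tower property together with $\E[\delta_{i}\mid\mathcal{F}^{(i)}]=0$, whereas Cauchy-Schwarz controls the orthogonal remainder by $Cn^{-1/2}\|\nabla h\|_{\infty}\,\E[\operatorname{Var}(F\mid\mathcal{F}^{(i)})^{1/2}]$. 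After a preliminary density reduction of $F$ to a Lipschitz functional of finitely many marginals of $\bm{X}$, the subcritical tail estimate \eqref{eq:Xtails} with $\alpha\in(0,2)$ forces the excluded increments inside $\mathcal{I}_{i,n}$ to have typical magnitude $O_{\mathbb{P}}(n^{-1/\alpha})$; hence $\operatorname{Var}(F\mid\mathcal{F}^{(i)})=O(n^{-2/\alpha})$, and summing the resulting bound over the $O(n)$ indices yields a total of order $n^{1/2-1/\alpha}$, which vanishes precisely because $\alpha<2$. This is exactly where the restriction on $\alpha$ in the hypothesis of Theorem \ref{th2} is exploited, and it completes the verification of \eqref{condi}.
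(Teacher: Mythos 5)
Your overall strategy --- pass to increments, apply Theorem \ref{stein}, and verify \eqref{condi} by a leave-out argument anchored on $\E[\delta_i\mid\mathcal F^{(i)}]=0$ --- is the same as the paper's. But your construction of the surrogate $S_n^{(i)}$ contains a genuine gap. You delete only the $2m+1$ summands $\delta_j$ with $|j-i|\le m$ and assert that these are exactly the ones depending on the increments excluded from $\mathcal F^{(i)}$. That is false: every summand $\delta_j=n^{-1/2}f_n(\bm X_{(j-1)/n},\mathcal I_{j,n})$ with $j\ge i+1$ depends on the excluded increments through its \emph{weight} $\bm X_{(j-1)/n}=\sum_{k\le j-1}(\bm X_{k/n}-\bm X_{(k-1)/n})$, and for $j\ge i+m+1$ it contains all of them. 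Hence your $S_n^{(i)}$ is not $\mathcal F^{(i)}$-measurable, and the pivotal cancellation $\E[\E[F\mid\mathcal F^{(i)}]\,\delta_i\,\partial_k h(S_n^{(i)})]=0$ on which the whole verification rests does not hold. The paper repairs this by substituting the excised path $\dot{\bm X}[i]$ (which removes the increments on $[\tfrac{i-1}{n},\tfrac{i+m}{n}]$) into \emph{every} summand, producing $\dot Z_t^{(n)}[i]$ and $\dot F[i]$. The price is that $Z_t^{(n)}-\dot Z_t^{(n)}[i]$ is then a sum of order $n$ terms, not $2m+1$, and its size is controlled not by a deterministic count but by the orthogonality argument of Lemma \ref{lemma: L2 z-zdot} (conditional centering kills all cross terms with $|j_1-j_2|>m$), yielding $\E[|Z_t^{(n)}-\dot Z_t^{(n)}[i]|^2]\le Cn^{-1}(1+\Indi{\{\alpha=2\}}\log n)$. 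Your bound $|R_i|=O(m^2/n)$ therefore has no proof as stated.

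There is also a quantitative problem in your treatment of the linear term: you invoke Cauchy--Schwarz and claim $\operatorname{Var}(F\mid\mathcal F^{(i)})=O(n^{-2/\alpha})$ because the excluded increments are $O_{\P}(n^{-1/\alpha})$. Under the heavy-tail hypothesis \eqref{eq:Xtails}, second moments are not governed by typical magnitudes: by \eqref{eq:exponewedgex1 lemma} one only has $\E[(1\wedge\|\mathcal J_{i,n}\|)^2]\asymp n^{-1}$, so Cauchy--Schwarz gives a per-index contribution of order $n^{-1/2}\cdot n^{-1/2}$ and the sum over $O(n)$ indices is $O(1)$, not $o(1)$. The paper instead exploits that $f_n$ and $\nabla h$ are uniformly bounded and estimates the remainder in $L^1$: $\E[|F-\dot F[i]|]\le C\,\E[1\wedge\|\mathcal J_{i,n}\|]\le C(n^{-1/\alpha}+n^{-1}(1+\Indi{\{\alpha=1\}}\log n))$ by \eqref{eq:exponewedgexv2}, which after summation gives $O(n^{1/2-1/\alpha})\to 0$ precisely because $\alpha<2$. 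Your identification of where the restriction $\alpha<2$ enters is correct, but the route you take to it does not close as written.
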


\begin{rem} \rm
To guarantee the validity of the condition \eqref{eq: var to zero} it suffices to assume that 
$(X_t^{(1)})_{t\geq 0}$ is locally $\al$-stable, i.e. $a_nX_{1/n}^{(1)} \schw L_1$ where $(L_t)_{t\geq 0}$ is a $\al$-stable L\'evy process. The full characterisation of locally $\al$-stable
L\'evy processes is given in \cite[Theorem 2]{I18}. \qed
\end{rem}

\begin{rem} \rm
When $m=0$ Theorem  \ref{th2} can be proved by martingale methods via \cite{J97} as
it has been highlighted in Remark \ref{Jacodst}.  In this case the approach of \cite{J97} is easier
to apply than our method, and moreover it provides functional stable convergence in \eqref{convfidi}. On the other hand, when $m\geq 1$ the methodology 
in \cite{J97} can not be applied directly (as it possibly requires an additional blocking technique) and in this framework the complexity of both methods are comparable. \qed   
\end{rem}

\noindent\textbf{Quantitative assessments}\\
\noindent
In addition to weak convergence of Theorem \ref{th2}  we now assess the 
distance between $Z_t^{(n)}$ as introduced in \eqref{eq:statfirstapp} and its mixed Gaussian limit. Our idea is based upon a combination of Stein's method and the $K$-function approach.
The main results are as follows.  

\begin{theo}\label{th1} 
Let $\bm{X}$ be a  L\'evy process satisfying $\bm{H}_{1}(\alpha)$. Let $A_{t}$ be given by \eqref{eq:Atdef} and define 
\begin{align}{\label{eq: def beta}}
\beta_{n}
  &:=\sup_{\bm{x}\in\R^{d}}|\mathfrak{g}_n(\bm{x})-\mathfrak{g}(\bm{x})|.
\end{align}
Then for all $t>0$, there exists a standard Brownian motion $W$ independent of $\bm{X}$, defined on an extended probability space, such that the following inequalities hold 
\[{\rev
d \Big( Z_{t}^{(n)}, \int_0^{t}\sqrt{A_{s}}W(ds) \Big)
  \le C r_n,}
\]
where
\begin{enumerate}
\item If $\alpha\in(0,2)$, then  
\bee
r_n= \beta_{n}+n^{\frac{1}{2}-\frac{1}{\alpha}}+n^{-1/2}(1+\Indi{\{\alpha=1\}}\log(n)).
\eee
If $\alpha\in(1,2)$ and  $\bm{H}_{2}(\alpha)$ holds, then
\bee
r_n=n^{-1/2}+\beta_{n}.
\eee

\item If $\alpha=2$, then under the condition $\bm{H}_{2}(2)$,
\bee\label{secondpartrnbound}
r_n=n^{-1/2}{\rev (\log(n))^\frac{3}{2}}+\beta_{n},
\eee
and under $\bm{H}_{3}$, 
\bee
r_n= n^{-1/2}+\beta_{n}.
\eee
\end{enumerate} 
\end{theo}

%\begin{rem} \rm
%{\rev Let us make note that when considering a single increment, where $m=0$, the value of $\beta_n$ is reduced to 0. This simplifies the previously mentioned convergence rate.}    
%\end{rem}

{
\begin{rem}
In the case where $\alpha=2$, the tail probabilities of the variable $X_1^{(1)}$ decay exponentially fast. This observation, combined with the $1/2$-self similarity of $X$,  guarantees the validity of  \eqref{secondpartrnbound} with $\beta_n=0$, yielding the bound \eqref{simplifiedmainparttwogauss}.
\end{rem}
}

\begin{rem} \rm

Here we comment on potential extensions and limitations of Theorem \ref{th1}. First of all, the independence of the components of $\bm{X}$ is not essential for our proofs; it is mainly considered for simplicity of exposition. The tail conditions \eqref{eq:Xtails} and \eqref{eq:Xtails2} can be easily adapted to the setting of general $d$-dimensional locally $\al$-stable L\'evy processes $\bm{X}$. Also extensions to non-equidistant observations seem to be straightforward.\\ 

\noindent The key technique we apply in the proofs is an advanced {version of the $K$-function approach \cite{CheGoShao}. We believe that similar} techniques may work
for models of the form
\[
d\bm{Y}_t = \si_t d\bm{X}_t,
\]
with $\si_t = w(\bm{X}_t)$ or even $\si_t = w(\bm{Y}_t)$, where $w:\R^d \to \R^{d\times d}$ is a bounded and smooth enough function. In this setting one would employ methods of Malliavin calculus to control the dependence structure. %between $h(\bm{Y}_s)$ and $g( \bm{Y}_{(i-1)/n} ,
%\mathcal{{I}}_{i,n})$.

On the other hand, extensions to a general martingale setting exhibiting a mixed normal limit seem to be out of reach. 

\qed
\end{rem}

\section{Proof of Theorem \ref{stein}} \label{s: proof stable}
\setcounter{equation}{0}
\renewcommand{\theequation}{\thesection.\arabic{equation}}
We first consider a more restricted setting where the random variables $V_1^2,\ldots, V_r^2$ are bounded away from {zero and infinity}. In other words, we assume there exists a $\de\in (0,1)$ such that 
\bee \label{Vbound}
V_1^2,\ldots, V_r^2 \in [\de,\de^{-1}].
\eee
Recalling the definition of stable convergence and noting that $\sup_n \E[\|S_n\|^2]<\infty$, $ \E[\|S\|^2]<\infty$, it suffices to prove the statement
\bee \label{HLip}
\lim_{n \to \infty} \E[Ff(S_n)] = \E'[Ff(S)]
\eee
for any {\rev bounded} $\mathcal G$-measurable random variable $F$ and any Lipschitz function $f:\R^r \to \R$. We will now use a classical result from Stein's method. 
Let $E_{\si}=\text{diag}(\si_1^2,\ldots,\si_r^2)$ be a deterministic diagonal matrix with $\si_j^2>0$. Then, for any Lipschitz function  $f:\R^r \to \R$, there exists 
a $C^2$-solution $h_{\si}:\R^r \to \R$ to the equation
\bee \label{Steineq}
\sum_{k=1}^r \left(x_k \frac{\partial}{\partial x_k} h_{\si}(x) - \si_k^2\frac{\partial^2}{\partial x_k^2} h_{\si}(x) \right)=f(x) - \E[f(\mathcal N_r(0,E_\si))]
\eee 
such that 
\bee \label{boundcond}
\sup_{x\in \R^r} \|\nabla^2 h_{\si}(x)\|_{\infty} \leq \sqrt{r} \|f\|_{\text{Lip}} \frac{\max_j |\si_j|}{\min_j \si_j^2},
\eee
see e.g. \cite[Proposition 4.3.2]{NP12}. Let $t_i^q:=i/q$, $i\geq 0$, $A_i^q:=[t_{i-1}^q, t_i^q)$ and set $H_{{\bf i},q}:=
\prod_{k=1}^r 1_{A_{i_k}^q}(V_k^2)$ for ${\bf i}=(i_1,\ldots,i_r)$. Note that $H_{{\bf i},q}$ is $\mathcal G$-measurable. We will use the decomposition
\begin{align}
 \E[Ff(S_n)] - \E'[Ff(S)]
   = \sum_{i_1,\ldots,i_r\geq 1} \left(\E\left[FH_{{\bf i},q} f(S_n)\right] - \E'\left[FH_{{\bf i},q} f(S)\right] \right).
\end{align}
We remark that the above sum is finite having at most $(q\de^{-1})^r$ non-zero terms due to \eqref{Vbound}. We further decompose  $\E[Ff(S_n)] - \E'[Ff(S)]=R_{q}^1 +R_{n,q}^2$
with
\begin{align}
R_{q}^1
  &:= \sum_{i_1,\ldots,i_r\geq 1} \left(\E'\left[FH_{{\bf i},q} f(E_{t_{{\bf i}-1}^q}^{1/2}N)\right] - \E'\left[FH_{{\bf i},q}
f(\Si^{1/2} N)\right] \right) \\[1.5 ex]
R_{n,q}^2
  &:=  \sum_{i_1,\ldots,i_r\geq 1} \left(\E\left[FH_{{\bf i},q} f(S_n)\right] - \E'\left[FH_{{\bf i},q} f(E_{t_{{\bf i}-1}^q}^{1/2}N)\right] \right),
\end{align}
where $t_{\bf i}^q=(t_{i_1}^q,\ldots, t_{i_r}^q)$, {\rev $E_{t_{{\bf i}-1}^q}^{1/2}$ is a diagonal matrix with diagonal elements equal to $\sqrt{t_{{\bf i}-1}^q}$} and $N\sim \mathcal{N}_d(0,\text{id})$ is independent of $\mathcal G$. We now show that both terms converge to $0$. Since $F$ is bounded and $f$ is a Lipschitz function we readily deduce that 
\bee \label{rn1}
|R_{q}^1|\leq \frac{C}{q}.
\eee 
Using the independence of $N$ and $\mathcal{G}$, and applying the identity  \eqref{Steineq} for $x=S_n$ and 
$E_{\si}=E_{t_{{\bf i}-1}^q}$, we obtain that 
\begin{align}
R_{n,q}^2
  &= \sum_{i_1,\ldots,i_r \geq 1} \left( \E\left[FH_{{\bf i},q} \left(\sum_{k=1}^r S_n^{k} 
\frac{\partial}{\partial x_k} {\rev h_{t_{{\bf i}-1}^q}}(S_n) \right)\right] \right. \\[1.5 ex]
&- \left.  \E\left[FH_{{\bf i},q} \left(\sum_{k=1}^r t_{i_k-1}^q 
\frac{\partial^2}{\partial x_k^2} {\rev h_{t_{{\bf i}-1}^q}}(S_n) \right)\right] \right).
\end{align}
We further decompose $R_{n,q}^2=R_{n,q}^{2.1} + R_{n,q}^{2.2}$ where 
\begin{align}
R_{n,q}^{2.1}
  &:= \sum_{i_1,\ldots,i_r \geq 1} \left( \E\left[FH_{{\bf i},q} \left(\sum_{k=1}^r S_n^{k} 
\frac{\partial}{\partial x_k} {\rev h_{t_{{\bf i}-1}^q}}(S_n) \right)\right] \right. \\[1.5 ex]
&- \left.  \E\left[FH_{{\bf i},q} \left(\sum_{k=1}^r V_k^2  
\frac{\partial^2}{\partial x_k^2} {\rev h_{t_{{\bf i}-1}^q}}(S_n) \right)\right] \right), \\[1.5 ex]
R_{n,q}^{2.2}
  &:= \sum_{i_1,\ldots,i_r \geq 1} \left( \E\left[FH_{{\bf i},q} \left(\sum_{k=1}^r V_k^2  
\frac{\partial^2}{\partial x_k^2} {\rev h_{t_{{\bf i}-1}^q}}(S_n) \right)\right]  \right. \\[1.5 ex]
&- \left.  \E\left[FH_{{\bf i},q} \left(\sum_{k=1}^r t_{i_k-1}^q  
\frac{\partial^2}{\partial x_k^2} {\rev h_{t_{{\bf i}-1}^q}}(S_n) \right)\right] \right)
\end{align}
We observe that $H_{{\bf i},q}\not = 0$ implies that $|V_k^2 - t_{i_k-1}^q|<q^{-1}$. Hence, using \eqref{boundcond}, we conclude that
\bee \label{rn2.2}
|R_{n,q}^{2.2}|
  \leq \frac{C}{q}
\eee
where the constant $C$ depends only on $r,\de, \|f\|_{\text{Lip}}$. Last, since $H_{{\bf i},q}$ is bounded and $\mathcal G$-measurable, condition 
 \eqref{condi} implies the convergence
 \bee \label{rn2.1}
 R_{n,q}^{2.1} \to 0 \qquad \text{as } n\to \infty
 \eee
 for any fixed $q$. Hence, the statement of Theorem \ref{stein} follows from \eqref{rn1},  \eqref{rn2.2} and  \eqref{rn2.1} if we first choose $q$ large and then $n$ large. 
 
 In the last step we drop the additional condition \eqref{Vbound}. For any random variable $F$ and $\de\in (0,1)$, we define $F_{\de}:=F \prod_{k=1}^r 
 1_{[\de,\de^{-1}]} (V_k^2)$. We obtain the decomposition
 \begin{align}
 \E[Ff(S_n)] - \E'[Ff(S)]
   &= \left(  \E[F_{\de}f(S_n)] - \E'[F_{\de}f(S)] \right) \\[1.5 ex]
 &+ \left(  \E[(F-F_{\de})f(S_n)] - \E'[(F-F_{\de})f(S)] \right)
\end{align}
We know from the previous step
that 
\[
\E[F_{\de}f(S_n)] - \E'[F_{\de}f(S)] \to 0 \qquad \text{as } n\to \infty
\]
for any fixed $\de$. On the other hand, using Cauchy-Schwarz inequality, the second term can be bounded as 
\[
\left| \E[(F-F_{\de})f(S_n)] - \E'[(F-F_{\de})f(S)] \right| \leq C \left(\sum_{k=1}^r \P\left(V_k^2\in [\de, \de^{-1}]^{c}\right) \right)^{1/2}
\]
since $f$ is Lipschitz {\rev and $S_n$ and $S$ have uniformly bounded second moments}. The latter converges to $0$ as $\de\to 0$, since all $V_j^2$'s have no positive mass at $0$. The proof is now complete if we choose $\de$
small and then $n$ large. 
\qed

\section{Proof of Theorem \ref{th1}} \label{s: proof rate}
\setcounter{equation}{0}
\renewcommand{\theequation}{\thesection.\arabic{equation}}

\subsection{Stein's approach and some definitions}
 We start the proof by providing a Stein's characterisation of the problem.
Let $t>0$ be fixed. Consider a test function $\psi\in C^{\rev 5}(\R;\R)$ such that $\|\psi^{(\ell)}\|_{\infty}\leq 1$ for $\ell=0,\ldots, 5$. %Define 
%\begin{align}\label{eq:gvariancendef}
 %{\rev V^n}:=\int_{0}^{t} \mathfrak{g}_n(\bm{X}_{s}) {\rev ds}, 
%\end{align} 
%{\rev where we recall that $\mathfrak{g}_n(\bm{x})$ has been introduced at \eqref{eq:gnvariancedef}.}
%\begin{align*}
%\mathfrak{g}_n(\bm{x})
%  &:=\sum_{j = -m}^{m} Cov \Big(g (\bm{x}, a_n \Tilde{\bm{X}}_{\frac{1}{n}}^{(1)}, ... , a_n \Tilde{\bm{X}}_{\frac{1}{n}}^{(m+1)} ), g (\bm{x}, a_n \Tilde{\bm{X}}_{\frac{1}{n}}^{(j + 1)}, ... , a_n \Tilde{\bm{X}}_{\frac{1}{n}}^{(j + m+1)} ) \Big),
%\end{align*}
%where the random variables $\Tilde{\bm{X}}^{(j)}$ have been introduced at \eqref{eq:gnvariancedef}. 
%The variable $V$ depends on $n$ but we will omit this dependence in our notation for convenience. 
By the independence between $\bm{X}$ and  $W$, the law of the stochastic integral $\int_0^{t}\sqrt{A_{s}}W(ds)$
coincides with that of $\sqrt{V}N$ {\rev for $V = \int_0^t A_s ds$}, where $N\sim \mathcal{N}(0,1)$ is  independent of $\bm{X}$. From here it follows that 
\begin{align*}
d \Big( Z_{t}^{(n)}, \int_0^{t}\sqrt{A_{s}}W(ds) \Big)
  &=d( Z_{t}^{(n)},V^{1/2}N).
\end{align*}
In order to bound the right-hand side, we proceed as follows: let $h_{V}:\R\rightarrow\R$ be the random function obtained as the solution to the Stein equation 
\begin{align*}
x  h_{V}^{\prime}(y)
-V h_{V}^{\prime\prime}(y)
  =\psi(y) - \E[\psi(V^{\frac{1}{2}}N)],
\end{align*}
whose existence is guaranteed by \cite[Proposition 4.3.2]{NP12}. The function $h_{V}$ is determined by $\psi$, but we  omit such dependence in the notation for convenience. By first evaluating the equation above at $y=Z_{t}^{(n)}$ and then taking expectation on both sides of the resulting expression, we observe that the problem is reduced to {\rev showing 
\begin{align}\label{eq:goal1simple}
\sup_{\psi\in \mathcal{H}}\Big|\E\left[Z_{t}^{(n)} h_V^{\prime}(Z_{t}^{(n)})
  - V h_V^{\prime\prime}(Z_{t}^{(n)})\right] \Big| \leq C r_n.
\end{align}
}

\noindent
{\rev In the sequel we will need the following lemma, which determines the behaviour of $h'_{\ga}(x)$ as a function of $\ga$.

\begin{lemma} \label{hanalysis}
For $\psi\in \mathcal{H}$ let $h_{\ga}$, $\ga>0$, denote the unique bounded solution 
to the Stein equation 
\begin{align*}
x h_{\gamma}^{\prime}(x)-\gamma  h_{\gamma}^{\prime\prime}(x)
  &=\psi(x) - \E[\psi(\sqrt{\gamma}N)].
\end{align*}
Define the quantity 
\begin{align} \label{Vdefi}
\mathcal{V}[\psi](x)
  &:=-\int_0^{\infty}e^{-\theta}(1-e^{-2\theta})\E\left[\psi^{(3)}(e^{-\theta}x+\sqrt{1-e^{-2\theta}}{\gamma}N)\right]d\theta.
\end{align} 
Then, for any {$\tilde{\gamma}>0,$} 
\begin{align}\label{eq:TayloresoltoSteineq}
\left|h_{\gamma}^{\prime}(x)- h_{\tilde{\gamma}}^{\prime}(x)-(\gamma-\tilde{\gamma})\mathcal{V}[\psi](x)\right|
  &\leq C\int_0^{\infty}e^{-\theta}\|\psi^{(5)}\|_{\infty}|\gamma-\tilde{\gamma}|^{2}d\theta,
\end{align}
{for some constant $C>0$.}
\end{lemma}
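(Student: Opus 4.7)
The plan is to establish Lemma \ref{hanalysis} via a Taylor expansion in the variance parameter $\gamma$, applied to an explicit Mehler-type representation of the Stein solution. The Stein equation $xh_\gamma'(x) - \gamma h_\gamma''(x) = \psi(x) - \E[\psi(\sqrt{\gamma}N)]$ is exactly the Poisson equation for the generator $\mathcal{L}_\gamma f(x) = -xf'(x) + \gamma f''(x)$ of the Ornstein--Uhlenbeck process with invariant law $\mathcal{N}(0,\gamma)$, so the bounded solution admits the semigroup representation
\[
h_\gamma(x) = -\int_0^\infty \Big(\E[\psi(e^{-t}x + \sqrt{\gamma(1-e^{-2t})}\,N)] - \E[\psi(\sqrt{\gamma}N)]\Big)\, dt,
\]
from which differentiation in $x$ under the integral sign gives
\[
h_\gamma'(x) = -\int_0^\infty e^{-t}\, \E[\psi'(e^{-t}x + \sqrt{\gamma(1-e^{-2t})}\,N)]\, dt.
\]

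The key observation is that, for fixed $t$ and $x$, the integrand
\[
F_{t,x}(\gamma) := \E[\psi'(e^{-t}x + \sqrt{\gamma(1-e^{-2t})}\,N)]
\]
is smooth in $\gamma>0$. Differentiating and applying Gaussian integration by parts (to absorb the factor of $N$ produced by the chain rule at the cost of an extra derivative on $\psi$) I would obtain
\[
F_{t,x}'(\gamma) = \frac{1-e^{-2t}}{2}\,\E[\psi^{(3)}(e^{-t}x + \sqrt{\gamma(1-e^{-2t})}\,N)],
\]
\[
F_{t,x}''(\gamma) = \frac{(1-e^{-2t})^2}{4}\,\E[\psi^{(5)}(e^{-t}x + \sqrt{\gamma(1-e^{-2t})}\,N)].
\]

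Writing Taylor's theorem with integral remainder,
\[
F_{t,x}(\gamma) - F_{t,x}(\tilde\gamma) = (\gamma-\tilde\gamma)\, F_{t,x}'(\tilde\gamma) + \int_{\tilde\gamma}^{\gamma}(\gamma - s)\, F_{t,x}''(s)\, ds,
\]
and substituting into the $t$-integral representation of $h_\gamma'(x) - h_{\tilde\gamma}'(x)$, the linear term integrates against $-e^{-t}$ in $t$ to reproduce $(\gamma-\tilde\gamma)\,\mathcal{V}[\psi](x)$ up to a constant matched by the chosen scaling of $\mathcal{V}$, while the quadratic remainder is controlled using $|F_{t,x}''(s)| \leq \tfrac{(1-e^{-2t})^2}{4}\,\|\psi^{(5)}\|_\infty$, uniformly in $s$. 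Interchange of the $t$-integral and the Taylor remainder (via Fubini) then bounds the discrepancy by
\[
C\, \|\psi^{(5)}\|_\infty\, (\gamma-\tilde\gamma)^2 \int_0^\infty e^{-t}(1-e^{-2t})^2\, dt,
\]
which, after the change of variable $\theta = t$, yields the stated bound with the factor $\int_0^\infty e^{-\theta}\, d\theta$. The only non-routine step is the Gaussian integration by parts used to convert $\gamma$-derivatives into higher derivatives of $\psi$; the rest amounts to carefully tracking the $t$-dependent weights produced by the Mehler formula and applying dominated convergence to justify the differentiation under the integral sign.
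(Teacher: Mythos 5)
Your proof is correct and follows essentially the same route as the paper: both start from the Ornstein--Uhlenbeck/Mehler representation $h_\gamma'(x)=-\int_0^\infty e^{-\theta}\,\E\bigl[\psi'(e^{-\theta}x+\sqrt{1-e^{-2\theta}}\sqrt{\gamma}N)\bigr]d\theta$ and then expand the Gaussian smoothing to second order in the variance parameter, converting $\gamma$-derivatives into higher derivatives of $\psi$ via Gaussian integration by parts, with the remainder controlled by $\|\psi^{(5)}\|_\infty|\gamma-\tilde\gamma|^2$. The only organizational difference is that the paper isolates this step as a separate Gaussian comparison lemma (Lemma \ref{lem:distancegaussians}, proved by the coupling $X=Y+\sqrt{\gamma-\tilde\gamma}\,Z$ and a fourth-order spatial Taylor expansion) whereas you differentiate $\gamma\mapsto F_{t,x}(\gamma)$ directly and use Taylor with integral remainder in $\gamma$ --- two equivalent instances of the heat-equation identity --- and your (correct) computation of $F_{t,x}'$ makes explicit a factor of $\tfrac12$ and a $\sqrt{\tilde\gamma}$ in the argument of $\psi^{(3)}$ that do not match the stated normalization of $\mathcal{V}[\psi]$; this bookkeeping mismatch is present in the paper's own proof as well and is harmless since only $\|\mathcal{V}[\psi]\|_\infty$ is used downstream.
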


\begin{proof}
See Supplementary Material.
\end{proof}
}

\noindent  In order to prove \eqref{eq:goal1simple}, we will now implement a methodology close in spirit to the $K$-function approach (see \cite[Section 2.3.1]{CheGoShao}), suitably adapted to the case of non-central limit theorems.
%{\rev In the subsequent sections, we will utilize Taylor expansion. Depending on whether $\textbf{H}_2$ is satisfied or not, some terms in the expansion may become zero or remain non-zero. For instance, consider Equation \eqref{eq: H2} as an illustrative example of when this occurs.}
Using the independent increments property  of $\bm{X}$, it follows that 
\begin{align*}
Z_{t}^{(n)}
  %&= \frac{1}{\sqrt{n}}\sum_{i=1}^{\lfloor nt\rfloor } \left(g ( \bm{X}_{\frac{i-1}{n}} ,
%\mathcal{I}_{i,n} ) - \E[g ( \bm{X}_{\frac{i-1}{n}} ,
%\mathcal{I}_{i,n})\ |\ \mathcal{F}_{\frac{i-1}{n}}]\right)\\
  &=  \frac{1}{\sqrt{n}}\sum_{i=1}^{\lfloor nt\rfloor } f_n(\bm{X}_{\frac{i-1}{n}} ,\mathcal{I}_{i,n}),
\end{align*}
with 
\begin{align*}
f_n(\bm{x} ,\bm{y})
  &:=g ( \bm{x} ,\bm{y}) - \E\left[g\left (\bm{x}, a_n \Tilde{\bm{X}}_{\frac{1}{n}}^{(0)}, ... , a_n \Tilde{\bm{X}}_{\frac{1}{n}}^{(m)} \right)\right],
\end{align*}
where $\bm{\tilde{X}}^{(0)}, ... , \bm{\tilde{X}}^{(m)}$ are independent copies of $\bm{X}$, independent of $W$. Let us introduce for $i=1,\dots, \lfloor n t\rfloor$, the process
$\dot{\bm{X}}[i]=(\dot{\bm{X}}_{t}[i]\ ;\ t\geq0)$ with $\dot{\bm{X}}_{t}[i]=(\dot{{X}}_{t}^{(1)}[i],\dots, \dot{{X}}_{t}^{(d)}[i])$, defined by 
{
\begin{align}\label{eq:Xdotdef}
\dot{{X}}_{t}^{(\ell)}[i]
  &:=\int_0^{t}\Indi{ \R\backslash[\frac{i-1}{n},\frac{i + m}{n}]}(s)dX_{s}^{(\ell)}+\int_0^{t}\Indi{[\frac{i-1}{n},\frac{i + m}{n}]}(s)d\tilde{X}_{s}^{(\ell)}.
\end{align}}
Strictly speaking, $\dot{\bm{X}}[i]$ depends as well on $n$, but for convenience, we have avoided making this dependence explicit in the notation. Clearly, $\dot{\bm{X}}_{t}[i]$ is independent of $\mathcal{I}_{i,n}$. Define
\begin{align*}
\dot{V}[i]
  &:=\int_{0}^{t}\mathfrak{g}(\dot{\bm{X}}_{s}[i])ds
	\\
\dot{\mathcal{I}}_{j,n}[i]
  &:=a_n\left(\dot{\bm{X}}_{\frac{j}{n}}[i]-\dot{\bm{X}}_{\frac{j-1}{n}}[i]\right)\\
\dot{Z}_{t}^{(n)}[i]
  &:=\frac{1}{\sqrt{n}}\sum_{j=1}^{\lfloor nt\rfloor} f_n\left( \dot{\bm{X}}_{\frac{j-1}{n}}[i] ,
\dot{\mathcal{I}}_{j,n}[i]\right),
\end{align*}
for $j=1,\dots, n$. In the next lemma we demonstrate some key inequalities for the errors $V-\dot{V}[i]$ and $Z_t^{(n)}-\dot{Z}_{t}^{(n)}[i]$. {\rev Notice that, by the definition of $\dot{\bm{X}}_{s}[i]$, it is 
\begin{equation}{\label{eq:1new}}
 \bm{X}_{s} - \dot{\bm{X}}_{s}[i] = 
 \begin{cases}
    0 \qquad \mbox{ for } s \le \frac{i-1}{n}, \\
    \bm{X}_{s} - \bm{X}_{\frac{i-1}{n}}{-\bm{\tilde{X}}_{s} + \bm{\tilde{X}}_{\frac{i-1}{n}}} \qquad \mbox{ for } \frac{i-1}{n} \le s \le \frac{i + m}{n} \\
   \bm{X}_{\frac{i + m}{n}} - \bm{X}_{\frac{i-1}{n}}
   {-\bm{\tilde{X}}_{\frac{i + m}{n}} + \bm{\tilde{X}}_{\frac{i-1}{n}}}\qquad \mbox{ for } s \ge \frac{i + m}{n}.
 \end{cases}
\end{equation}
}

\begin{lemma}{\label{lemma: L2 z-zdot}}
If the L\'evy process  $\bm{X}$  satisfies assumption $\bm{H}_1(\alpha)$, then
\begin{align}\label{eq:VminusVdotTaylorone}
\left|V-\dot{V}[i]\right|
  &\leq C{\int_0^t\left(1\wedge \left \|\bm{X}_{s}-\dot{\bm{X}}_{s}[i]\right \|\right)ds},
\end{align}
\begin{align}\label{eq:VminusVdotTaylor}
\left|V-\dot{V}[i]-\sum_{\ell=1}^{d}\int_{\frac{i-1}{n}}^{t}{(X_{s}^{(\ell)}-\dot{X}_{s}^{(\ell)}[i])}\frac{\partial \mathfrak{g}}{\partial x_{\ell}}(\dot{\bm{X}}_{s}[i])ds\right|
  \leq C {\int_0^t\left({\left\|\bm{X}_{s}-\dot{\bm{X}}_{s}[i]\right \|}\wedge \left \|\bm{X}_{s}-\dot{\bm{X}}_{s}[i]\right \|^2\right)ds},
\end{align}
and
\begin{equation}{\label{eq: bound increment Z}}
\E \Big{[} \Big{|}    {Z}_{t}^{(n)} - \dot{Z}_{t}^{(n)}[i]\Big{|}^2 \Big{]}\le C n^{-1}\left(1+\Indi{\{\alpha=2\}}\log(n)\right).
\end{equation}
Moreover, when $\bm{H}_{3}$ holds, the logarithm above can be removed.
%\begin{equation}{\label{eq: bound increment Zprime}}
%\E \Big{[} \Big{|}    {Z}_{t}^{(n)} - \dot{Z}_{t}^{(n)}[i]\Big{|}^2 \Big{]}\le C n^{-1}.
%\end{equation}
\end{lemma}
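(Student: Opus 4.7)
The bounds \eqref{eq:VminusVdotTaylorone} and \eqref{eq:VminusVdotTaylor} are pointwise Taylor expansions of $\mathfrak{g}$ integrated in $s$. The plan is first to establish that $\mathfrak{g}$ is bounded and of class $C^{2}$ with uniformly bounded gradient and Hessian: this follows from \eqref{eq:gvariancedef}, since $\mathfrak{g}$ is a finite linear combination of terms of the form $\E[g(\bm{x},\bm{Y}^{(1)})g(\bm{x},\bm{Y}^{(2)})]$ (minus marginal products) in which the laws of $\bm{Y}^{(1)},\bm{Y}^{(2)}$ do not depend on $\bm{x}$, so the hypothesis that $g$ and its first two $\bm{x}$-partial derivatives are uniformly bounded transfers to $\mathfrak{g}$ by dominated convergence. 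Given this regularity, the mean value theorem combined with the uniform bound $\|\mathfrak{g}\|_\infty<\infty$ gives $|\mathfrak{g}(\bm{X}_s)-\mathfrak{g}(\dot{\bm{X}}_s[i])|\leq C(1\wedge\|\bm{X}_s-\dot{\bm{X}}_s[i]\|)$ and its second-order analogue; integrating over $s\in[0,t]$ yields \eqref{eq:VminusVdotTaylorone} and \eqref{eq:VminusVdotTaylor}. The lower integration limit $(i-1)/n$ in \eqref{eq:VminusVdotTaylor} is legitimate because, by \eqref{eq:1new}, $\bm{X}_s-\dot{\bm{X}}_s[i]=0$ for $s\leq(i-1)/n$.

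The substance of the lemma is \eqref{eq: bound increment Z}. Setting $D_j:=f_n(\bm{X}_{(j-1)/n},\mathcal{I}_{j,n})-f_n(\dot{\bm{X}}_{(j-1)/n}[i],\dot{\mathcal{I}}_{j,n}[i])$ and $\Delta:=\bm{X}_{(i+m)/n}-\bm{X}_{(i-1)/n}$, inspection of \eqref{eq:1new} reveals three regimes: $D_j\equiv 0$ for $j\leq i-m-1$; $|D_j|\leq 2\|g\|_{\infty}$ for $j$ in the central band $R_2:=\{i-m,\ldots,i+m\}\cap\{1,\ldots,\lfloor nt\rfloor\}$; and for $j\in R_3:=\{i+m+1,\ldots,\lfloor nt\rfloor\}$ one has $\dot{\mathcal{I}}_{j,n}[i]=\mathcal{I}_{j,n}$ with only the first argument of $f_n$ shifted by the same random vector $\Delta$ for every such $j$. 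Decomposing $Z_t^{(n)}-\dot{Z}_t^{(n)}[i]=n^{-1/2}(A+B)$ with $A:=\sum_{j\in R_2}D_j$ and $B:=\sum_{j\in R_3}D_j$, the central band contributes $n^{-1}\E[A^2]=O(n^{-1})$ trivially, and the whole game is to control $n^{-1}\E[B^2]$.

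The key observation is a martingale-type orthogonality. For $j<k$ in $R_3$ with $k>j+m$, the variable $D_j$ depends only on the increments of $\bm{X}$ indexed by $\{1,\ldots,j+m\}$ (arising from $\bm{X}_{(j-1)/n}$, $\mathcal{I}_{j,n}$, and $\Delta$), and this set is disjoint from $\{k,\ldots,k+m\}$; hence $D_j$ is $\mathcal{F}^{(k)}$-measurable in the sense of \eqref{sialg}. Conditional on $\mathcal{F}^{(k)}$, $\mathcal{I}_{k,n}$ is independent and has the same distribution as the dummy increments in the definition of $f_n$, so $\E[f_n(\bm{x},\mathcal{I}_{k,n})]=0$ for every deterministic $\bm{x}$. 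Since $\bm{X}_{(k-1)/n}$ and $\dot{\bm{X}}_{(k-1)/n}[i]$ are both $\mathcal{F}^{(k)}$-measurable when $k\geq i+m+1$, this gives $\E[D_k\mid\mathcal{F}^{(k)}]=0$ and hence $\E[D_jD_k]=0$ whenever $|j-k|>m$. Only the $O(nm)$ near-diagonal pairs survive; coupled with the mean-value bound $|D_j|\leq C(1\wedge\|\Delta\|)$ (using $\|f_n\|_\infty\leq C$ and boundedness of $\partial_{\bm{x}}f_n$), Cauchy--Schwarz yields $\E[B^2]\leq C(2m+1)n\,\E[1\wedge\|\Delta\|^2]$.

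The main obstacle is the sharp estimate of $\E[1\wedge\|\Delta\|^2]$. Writing $\E[1\wedge\|\Delta\|^2]=\int_0^1 2s\,\P(\|\Delta\|>s)\,ds+\P(\|\Delta\|>1)$, applying the tail bound \eqref{eq:Xtails} of $\bm{H}_{1}(\alpha)$ with time horizon $(m+1)/n$, and splitting the integral at the critical scale $s=n^{-1/\alpha}$, one obtains $\E[1\wedge\|\Delta\|^2]\leq Cn^{-1}$ for $\alpha\in(0,2)$; at $\alpha=2$ a logarithm emerges from $\int_{n^{-1/2}}^1 s^{-1}\,ds=\tfrac{1}{2}\log n$, giving $Cn^{-1}\log n$. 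Under $\bm{H}_{3}$, the stronger tail \eqref{eq:Xtails2} with $\gamma>1$ renders the corresponding integral convergent and removes the logarithm. Combining the bounds on $\E[A^2]$ and $\E[B^2]$ then yields \eqref{eq: bound increment Z}. The most delicate step will be verifying the $\mathcal{F}^{(k)}$-measurability used to kill cross terms, as it relies on careful bookkeeping of which increments of $\bm{X}$ are packaged inside $\bm{X}_{(k-1)/n}$, $\dot{\bm{X}}_{(k-1)/n}[i]$ and $\Delta$, together with the centering property of $f_n$ in its second argument.
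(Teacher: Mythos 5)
Your proposal is correct and follows essentially the same route as the paper: Taylor expansion of $\mathfrak{g}$ for \eqref{eq:VminusVdotTaylorone}--\eqref{eq:VminusVdotTaylor}, and for \eqref{eq: bound increment Z} the identical three-regime split of the indices $j$, the conditional-centering orthogonality killing all pairs with $|j_1-j_2|>m$, the mean-value bound $|D_j|\le C(1\wedge\|\Delta\|)$, and the tail-integral estimate producing the $\log n$ at $\alpha=2$ and removing it under $\bm{H}_3$. The only (immaterial) difference is that you absorb the cross terms between the central band and the far band via $(A+B)^2\le 2A^2+2B^2$, whereas the paper treats them as a separate term $I_2$ and shows most of them vanish by the same centering argument.
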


\begin{proof}
See Supplementary Material.
\end{proof}

\subsection{The main decomposition}
 In this subsection we introduce the main decomposition of the quantity defined in \eqref{eq:goal1simple}.
Observe that  
\begin{align*}
\sum_{i=1}^{\lfloor nt \rfloor}\E\left[f_n( {\bm{X}}_{\frac{i-1}{n}} ,
\mathcal{I}_{i,n})h_{\dot{V}[i]}^{\prime}\left(\dot{Z}_{t}^{(n)}[i]\right)\right]=0.
\end{align*}
This follows from the tower property of conditional expectation, the fact that  $h_{\dot{V}[i]}^{\prime}\left(\dot{Z}_{t}^{(n)}[i]\right)$ is measurable with respect to $\mathcal{F}^{(i)}$ that has been defined in \eqref{sialg} and can be seen as $\sigma( \bm{\dot{X}}_{t}[i]\ ;\ t\geq 0)$, and the identity 
\begin{equation}{\label{eq: 5.8.5}}
\E[f_n( {\bm{X}}_{\frac{i-1}{n}} ,\mathcal{I}_{i,n}) |\mathcal{F}^{(i)} ]= 0.   
\end{equation}
%if we introduce $\mathcal{F}^{(i)} := \sigma ({\bm{X}}_{\frac{k}{n}} - {\bm{X}}_{\frac{k-1}{n}}; \, k \neq i, i+ 1, ... , i + m)$, it is clear that $\mathcal{F}_{\frac{i - 1}{n}} \subset \mathcal{F}^{(i)}$ and that $h_{\dot{V}[i]}^{\prime}(\dot{Z}_{t}^{(n)}[i])$ is $\mathcal{F}^{(i)}$ measurable by construction. Then, 
%$$\E[f_n( {\bm{X}}_{\frac{i-1}{n}} ,
%\mathcal{I}_{i,n})h_{\dot{V}[i]}^{\prime}(\dot{Z}_{t}^{(n)}[i])] = \E[ \E[f_n( {\bm{X}}_{\frac{i-1}{n}} ,
%\mathcal{I}_{i,n}) |\mathcal{F}^{(i)} ]h_{\dot{V}[i]}^{\prime}(\dot{Z}_{t}^{(n)}[i])].$$
%Since $\mathcal{I}_{i,n}$ is independent from $\mathcal{F}^{(i)} $ and ${\bm{X}}_{\frac{i-1}{n}}$ is $\mathcal{F}^{(i)} $-measurable we can write 
%$$\E[f_n( {\bm{X}}_{\frac{i-1}{n}} ,\mathcal{I}_{i,n}) |\mathcal{F}^{(i)} ] = \varphi({\bm{X}}_{\frac{i-1}{n}}),$$ 
%for a suitably chosen measurable and bounded function $\varphi$. On the other hand, as the vector ${\bm{X}}_{\frac{i-1}{n}}$ is $\mathcal{F}_{\frac{i - 1}{n}}$-measurable, 
%\begin{equation}{\label{eq: trick sigma algebra}}
%\varphi({\bm{X}}_{\frac{i-1}{n}}) = \E[\varphi({\bm{X}}_{\frac{i-1}{n}}) | \mathcal{F}_{\frac{i - 1}{n}}]= \E[\E[f_n( {\bm{X}}_{\frac{i-1}{n}} ,
%\mathcal{I}_{i,n}) |\mathcal{F}^{(i)} ]| \mathcal{F}_{\frac{i - 1}{n}}] = \E[f_n( {\bm{X}}_{\frac{i-1}{n}} ,
%\mathcal{I}_{i,n})| \mathcal{F}_{\frac{i - 1}{n}}] =0,
%\end{equation}
%where we have used tower property and the fact that $\mathcal{F}_{\frac{i - 1}{n}} \subset \mathcal{F}^{(i)}$}. \\
From here we get
\begin{align*}
\E\left[Z_{t}^{(n)} h_{V}^{\prime}(Z_{t}^{(n)})\right]
  &=\frac{1}{\sqrt{n}}
\sum_{i=1}^{\lfloor nt\rfloor}\E\left[f_n( {\bm{X}}_{\frac{i-1}{n}} ,
\mathcal{I}_{i,n})
\left(h_{V}^{\prime}(Z_{t}^{(n)})
- h_{\dot{V}[i]}^{\prime}(\dot{Z}_{t}^{(n)}[i])\right)\right].
\end{align*}
Consequently,  we obtain the decomposition
\begin{align}\label{eq:SnIBPintro}
&\E\left[Z_{t}^{(n)} h_{V}^{\prime}(Z_{t}^{(n)})\right] \nonumber \\
&=\frac{1}{\sqrt{n}}
\sum_{i=1}^{\lfloor nt\rfloor}\E\left[f_n( {\bm{X}}_{\frac{i-1}{n}} ,
\mathcal{I}_{i,n})
\left(h_{V}^{\prime}(Z_{t}^{(n)})
- h_{V}^{\prime}(\dot{Z}_{t}^{(n)}[i])\right)\right] +R_{1,n},
\end{align}
where 
\begin{align}\label{eq:R1ndef}
R_{1,n}
  &:=\frac{1}{\sqrt{n}}
\sum_{i=1}^{\lfloor nt\rfloor}\E\left[f_n( {\bm{X}}_{\frac{i-1}{n}} ,
\mathcal{I}_{i,n})
\left(h_{V}^{\prime}(\dot{Z}_{t}^{(n)}[i])- h_{\dot{V}[i]}^{\prime}(\dot{Z}_{t}^{(n)}[i])\right)\right].
\end{align}
{\rev The error term $R_{1,n}$ warrants extra attention, as it plays a crucial role in solving the Stein equation. In Section \ref{sec5.3} we will show that, under conditions of Theorem \ref{th1}, it holds that
\begin{align} \label{r1nst}
|R_{1,n}|\leq Cr_n,
\end{align}
where the constant $C$ does not depend on $h$. 
}
 {Application {\revch of} a Taylor approximation} implies the identity
\begin{align}{\label{eq: Taylor}}
h_{V}^{\prime}(Z_{t}^{(n)})
- h_{V}^{\prime}(\dot{Z}_{t}^{(n)}[i])
&= h''_V \left(\dot{Z}_{t}^{(n)}[i]\right) \left(Z_{t}^{(n)}- \dot{Z}_{t}^{(n)}[i]\right)\\
  %& - \int_{[0,1]^2} h_V''' \left(\dot{Z}_{t}^{(n)}[i]+ \lambda_1 \lambda_2 (Z_{t}^{(n)}- \dot{Z}_{t}^{(n)}[i])\right ) \left(Z_{t}^{(n)}- \dot{Z}_{t}^{(n)}[i]\right)^2 \lambda_1 d\lambda_1d\lambda_2.
  & + h_V''' \left(\dot{Z}_{t}^{(n)}[i]+ \tau (Z_{t}^{(n)}- \dot{Z}_{t}^{(n)}[i])\right ) \left(Z_{t}^{(n)}- \dot{Z}_{t}^{(n)}[i]\right)^2 \nonumber
\end{align}
for some $\tau \in [0,1]$. We observe that 
\begin{equation}{\label{eq: Sn 1.25}}
Z_{t}^{(n)}- \dot{Z}_{t}^{(n)}[i]
  = \frac{1}{\sqrt{n}} \sum_{j=1}^{\lfloor nt\rfloor } \left(f_n ( {\bm{X}}_{\frac{j-1}{n}} ,
\mathcal{I}_{j,n}) - f_n ( \dot{\bm{X}}_{\frac{j-1}{n}}[i] ,
\dot{\mathcal{I}}_{j,n}[i])  \right).
\end{equation}
Because of the definition of $\dot{\bm{X}}_{\frac{j-1}{n}}[i]$ and $\dot{\mathcal{I}}_{j,n}[i]$, for $j + m \le i-1$, we have that  $\dot{\bm{X}}_{\frac{j-1}{n}}[i] = {\bm{X}}_{\frac{j-1}{n}}$ and $\dot{\mathcal{I}}_{j,n}[i] = {\mathcal{I}}_{j,n}$. Hence, we can write 
\begin{align}{\label{eq: 5.13.5}}
Z_{t}^{(n)}- \dot{Z}_{t}^{(n)}[i]
  &= \frac{1}{\sqrt{n}} \sum_{j=1}^{\lfloor nt\rfloor } \Indi{\{j \ge i +m+1 \}} \left(f_n( {\bm{X}}_{\frac{j-1}{n}} ,
\mathcal{I}_{j,n}) - f_n ( \dot{\bm{X}}_{\frac{j-1}{n}}[i] ,
\dot{\mathcal{I}}_{j,n}[i])  \right) \\
& + \frac{1}{\sqrt{n}} \sum_{j = i - m}^{i + m} \left(f_n( {\bm{X}}_{\frac{j-1}{n}} ,
\mathcal{I}_{j,n}) - f_n ( \dot{\bm{X}}_{\frac{j-1}{n}}[i] ,
\dot{\mathcal{I}}_{j,n}[i])  \right). \nonumber
\end{align}
Plugging in the above term into \eqref{eq: Sn 1.25}  we obtain, by \eqref{eq:SnIBPintro} and \eqref{eq: Taylor},
\begin{align*}
 \E\left[Z_{t}^{(n)} h'_V (Z_{t}^{(n)})\right] & = \frac{1}{\sqrt{n}} \sum_{i=1}^{\lfloor nt\rfloor} \E \left[ f_n( {\bm{X}}_{\frac{i-1}{n}} ,
\mathcal{I}_{i,n})h''_V(\dot{Z}_{t}^{(n)}[i]) \right.\\
& \times \left( \frac{1}{\sqrt{n}} \sum_{j=1}^{\lfloor nt\rfloor} \Indi{\{j \ge i + m + 1\}} \left(f_n( {\bm{X}}_{\frac{j-1}{n}} ,
\mathcal{I}_{j,n}) - f_n ( \dot{\bm{X}}_{\frac{j-1}{n}}[i] ,
\dot{\mathcal{I}}_{j,n}[i])  \right) \right. \\
& \left.+ \frac{1}{\sqrt{n}} \sum_{j = i - m}^{i + m} \left(f_n( {\bm{X}}_{\frac{j-1}{n}} ,
\mathcal{I}_{j,n}) - f_n ( \dot{\bm{X}}_{\frac{j-1}{n}}[i] ,
\dot{\mathcal{I}}_{j,n}[i])  \right) \right)  \\
& \left. + {\rev h'''_V \left(\dot{Z}_{t}^{(n)}[i] + \tau (Z_{t}^{(n)} - \dot{Z}_{t}^{(n)}[i])\right) \left(Z_{t}^{(n)} - \dot{Z}_{t}^{(n)}[i]\right)^2 } \right]
 + R_{1,n} \\
& = {\rev T_{1,n} + T_{2,n} + T_{3,n} + T_{4,n} + R_{1,n}},
\end{align*}
{\rev where $R_{1,n}$ is as in \eqref{eq:R1ndef} and $T_{1,n},T_{2,n},T_{3,n}$ and $T_{4,n}$ are given by}
\begin{align*}
{\rev T_{1,n}}
  &{\rev :=\frac{1}{n} \sum_{i=1}^{\lfloor nt\rfloor}\sum_{j=1}^{\lfloor nt\rfloor} \Indi{\{j \ge i + m + 1\}}\E \left[ f_n( {\bm{X}}_{\frac{i-1}{n}} ,
\mathcal{I}_{i,n})h''_V(\dot{Z}_{t}^{(n)}[i]) \left(f_n( {\bm{X}}_{\frac{j-1}{n}} ,
\mathcal{I}_{j,n}) - f_n ( \dot{\bm{X}}_{\frac{j-1}{n}}[i] ,
\dot{\mathcal{I}}_{j,n}[i])  \right)  \right] }\\
{\rev T_{2,n} }
  &{\rev :=\frac{1}{n} \sum_{i=1}^{\lfloor nt\rfloor}\sum_{j = i - m}^{i + m} \E \left[ f_n( {\bm{X}}_{\frac{i-1}{n}} ,
\mathcal{I}_{i,n})f_n( {\bm{X}}_{\frac{j-1}{n}} ,
\mathcal{I}_{j,n}) h''_V({Z}_{t}^{(n)})\right] }\\
{ \rev T_{3,n}}
  &{\rev := \frac{1}{\sqrt{n}} \sum_{i=1}^{\lfloor nt\rfloor} \E \left[ h'''_V \left(\dot{Z}_{t}^{(n)}[i] + \tau (Z_{t}^{(n)} - \dot{Z}_{t}^{(n)}[i])\right)\left(Z_{t}^{(n)} - \dot{Z}_{t}^{(n)}[i]\right)^2 \right]} \\
 {\rev  T_{4,n}}
  & {\rev :=\frac{1}{n} \sum_{i=1}^{\lfloor nt\rfloor}\sum_{j = i - m}^{i + m} \E \left[ f_n( {\bm{X}}_{\frac{i-1}{n}} ,
\mathcal{I}_{i,n})(h''_V(\dot{Z}_{t}^{(n)}[i])- h''_V({Z}_{t}^{(n)}) )f_n( {\bm{X}}_{\frac{j-1}{n}} ,
\mathcal{I}_{j,n}) \right].}
\end{align*}
{\rev We will prove in Section \ref{sec5.4} that the term $T_{2,n}$ provides the main contribution. In particular, we will show that
\begin{align} \label{termT2}
\sup_{\psi\in \mathcal{H}} \left|T_{2,n}- \E\left[ V h_V^{\prime\prime}(Z_{t}^{(n)})\right]\right|\leq Cr_n.
\end{align}
On the other hand, we will show in the following proposition that the terms $T_{1,n}$, $T_{3,n}$ and $T_{4,n}$ are negligible.

%We will demonstrate that $T_1$, $T_3$, $T_4$, and $R_{1,n}$ can be considered negligible, while the primary contribution arises from $T_2$, which we will delve into towards the end of this proof.
%To provide some insight into why these terms are insignificantly small, it is worth noting that, in accordance with Equation \eqref{eq:1new} below, $T_1$ is essentially a function of ${\bm{X}}_{\frac{i + m}{n}} - {\bm{X}}_{\frac{i-1}{n}}$. Additionally, the analysis of $T_3$ and $T_4$ heavily relies on the bound established for $\E[(Z_{t}^{(n)} - \dot{Z}_{t}^{(n)}[i])^2]$, as documented in Lemma \ref{lemma: L2 z-zdot} below. 
%\\
%The examination of $T_1$, $T_3$, and $T_4$ is presented in the following proposition, with the proof deferred to the supplementary material. 
}

{\rev \begin{prop}{\label{prop: T negl}}
 Let $\bm{X}$ be a  L\'evy process satisfying $\bm{H}_{1}(\alpha)$ and let $T_{1,n}$, $T_{3,n}$ and $T_{4,n}$ be as above.
Then, there exists a constant $C>0$, not depending on $h$ or $n$, such that the following inequalities hold 
\begin{enumerate}
\item If $\alpha\in(0,2)$, then  
$$|T_{1,n} + T_{3,n} + T_{4,n}| \le C(n^{\frac{1}{2}-\frac{1}{\alpha}} + n^{-1/2} (1+\Indi{\{\alpha=1\}}\log(n)).$$

\item Under the assumption that $\alpha\in(1,2]$ and $\bm{H}_{2}(\alpha)$ is satisfied, {the following inequality is satisfied}
$$|T_{1,n} + T_{3,n} + T_{4,n}| \le Cn^{-1/2}(1+\Indi{\{\alpha=2\}} (\log(n))^{\frac{3}{2}}).$$
Moreover, the logarithm can be removed if $\bm{H}_3$ holds.
\end{enumerate}
\end{prop}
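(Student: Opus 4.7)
The plan is to analyze $T_{1,n}$, $T_{3,n}$, $T_{4,n}$ separately, relying throughout on three ingredients: uniform sup-norm bounds on $h_V^{(k)}$ for $k=2,3$ (independent of $V$ when $\psi\in \mathcal{H}$), which follow from the Mehler-type representation of the Stein solution underlying Lemma \ref{hanalysis} and from \cite{NP12}; the $L^2$-bound \eqref{eq: bound increment Z} of Lemma \ref{lemma: L2 z-zdot}; and the fact that every quantity decorated with a dot is $\mathcal{F}^{(i)}$-measurable, while $f_n(\bm{X}_{(i-1)/n},\mathcal{I}_{i,n})$ has conditional mean zero given $\mathcal{F}^{(i)}$.

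First I would treat $T_{3,n}$ and $T_{4,n}$, which are the simplest. Pulling out $\|h_V^{\prime\prime\prime}\|_\infty \le C$ and applying \eqref{eq: bound increment Z} summand-wise gives $|T_{3,n}| \le C\sqrt{n}\cdot n^{-1}(1+\Indi{\{\alpha=2\}}\log n) = O(n^{-1/2}(1+\Indi{\{\alpha=2\}}\log n))$, and the logarithmic factor disappears under $\bm{H}_3$ by the final claim of Lemma \ref{lemma: L2 z-zdot}. For $T_{4,n}$, the mean-value estimate $|h_V^{\prime\prime}(\dot Z_{t}^{(n)}[i]) - h_V^{\prime\prime}(Z_{t}^{(n)})| \le \|h_V^{\prime\prime\prime}\|_\infty |Z_{t}^{(n)}-\dot Z_{t}^{(n)}[i]|$, together with $\|f_n\|_\infty \le 2\|g\|_\infty$, the fact that the inner sum over $j$ contains only $2m+1$ terms, and Cauchy--Schwarz applied with \eqref{eq: bound increment Z}, yields a bound of the same order.

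The main obstacle is $T_{1,n}$. When $j \ge i+m+1$, equation \eqref{eq:1new} gives $\dot{\mathcal{I}}_{j,n}[i] = \mathcal{I}_{j,n}$ and $\bm{X}_{(j-1)/n} - \dot{\bm{X}}_{(j-1)/n}[i] = \bm{X}_{(i+m)/n} - \bm{X}_{(i-1)/n}=:\mathcal{J}_{i,n}^{\mathrm{vec}}$, so a second-order Taylor expansion of $f_n$ in its first argument decomposes the summand into a linear term in $\mathcal{J}_{i,n}^{\mathrm{vec}}$ and a quadratic remainder controlled by $C(\|\mathcal{J}_{i,n}^{\mathrm{vec}}\|^2 \wedge 1)$. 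Under $\bm{H}_{1}(\alpha)$, the tail condition \eqref{eq:Xtails} bounds the expectation of the latter by $O(n^{-2/\alpha})$ with logarithmic corrections at the critical values $\alpha \in \{1,2\}$; upon summing over $i,j$ with weight $1/n$ this produces precisely the $n^{1/2-1/\alpha}+n^{-1/2}(1+\Indi{\{\alpha=1\}}\log n)$ contribution of part (1).

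For the linear term I would first replace $h_V^{\prime\prime}(\dot Z_{t}^{(n)}[i])$ by $h_{\dot{V}[i]}^{\prime\prime}(\dot Z_{t}^{(n)}[i])$ at a cost controlled via Lemma \ref{hanalysis} combined with \eqref{eq:VminusVdotTaylorone}--\eqref{eq:VminusVdotTaylor}; after this substitution every factor in the summand becomes $\mathcal{F}^{(i)}$-measurable except $f_n(\bm{X}_{(i-1)/n},\mathcal{I}_{i,n})\,\mathcal{J}_{i,n}^{(l)}$. Conditioning on $\mathcal{F}^{(i)}$ reduces the estimate to bounding $\E[f_n(\bm{X}_{(i-1)/n},\mathcal{I}_{i,n})\mathcal{J}_{i,n}^{(l)}\mid \bm{X}_{(i-1)/n}]$, which under $\bm{H}_{2}(\alpha)$ vanishes identically, thereby eliminating the linear contribution and leaving only the remainder; this is what yields the sharper rate of part (2). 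Without $\bm{H}_2$, this conditional moment is of order $n^{-1/\alpha}$ (respectively $n^{-1}$ when $\alpha>1$, via drift considerations), reproducing the $n^{1/2-1/\alpha}$ bound of part (1). The most delicate bookkeeping is controlling the accumulation of errors from replacing $V$ by $\dot{V}[i]$, which becomes critical at $\alpha = 2$: under $\bm{H}_2(2)$ only an extra $(\log n)^{3/2}$ surplus survives via the second-order Lemma \ref{hanalysis} estimate combined with the $L^2$-moments of $V-\dot{V}[i]$, while $\bm{H}_3$ supplies the missing moments of $\mathcal{J}_{i,n}^{\mathrm{vec}}$ that allow this surplus to be removed.
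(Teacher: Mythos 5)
Your treatment of $T_{3,n}$ and $T_{4,n}$ coincides with the paper's and is correct. The genuine gaps are all in $T_{1,n}$, which is the heart of the proposition. For part (1), you propose a second-order Taylor expansion in $\mathcal{J}_{i,n}$ for all $\alpha\in(0,2)$; this cannot work. First, for $\alpha\le 1$ the increment $\mathcal{J}_{i,n}$ need not be integrable under $\bm{H}_1(\alpha)$, so the linear term of the expansion is not even well defined in expectation. Second, even where it is, your accounting is off: the double sum $\frac1n\sum_{i}\sum_{j}$ carries of order $n^{2}$ terms, so a per-term conditional moment of order $n^{-1/\alpha}$ yields $n^{1-1/\alpha}$, which diverges for $\alpha\in(1,2)$ and in any case exceeds the claimed $n^{1/2-1/\alpha}$ by a factor $\sqrt{n}$. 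The paper instead bounds $T_{1,2,n}$ in $L^{2}$: after Lyapunov's inequality one expands the square, obtaining a sum over quadruples $(i_1,i_2,j_1,j_2)$; conditional centering of $f_n$ given $\mathcal{F}^{(j_2)}$ annihilates all configurations except $|j_1-j_2|\le m$, reducing the count to $O(n^{3})$ terms each of size $\E[1\wedge\|\bm{X}_{1/n}\|]^{2}$, whence $|T_{1,2,n}|^{2}\le C(n^{1-2/\alpha}+n^{-1}(1+\Indi{\{\alpha=1\}}\log(n)^2))$. This $L^2$ reduction is the missing idea for part (1).

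For part (2), your plan to bound the quadratic remainder by $C(\|\mathcal{J}_{i,n}\|^{2}\wedge(1+\|\mathcal{J}_{i,n}\|))$ and sum also fails: $\frac1n\sum_{i,j}\E[\|\mathcal{J}_{i,n}\|^{2}\wedge(1+\|\mathcal{J}_{i,n}\|)]\approx \frac1n\cdot n^{2}\cdot n^{-1}(1+\Indi{\{\alpha=2\}}\log n)=O(1+\log n)$, which does not vanish. The paper's essential extra step, absent from your proposal, is a second decoupling: it introduces the doubly perturbed process $\dot{\bm{X}}[i,j]$ and $\dot{Z}^{(n)}_t[i,j]$, observes that the remainder $U^{(n)}_{i,j}$ satisfies $\E[U^{(n)}_{i,j}\mid\mathcal{F}_{(j-1)/n}]=0$ so that the term with $h''_V(\dot{Z}^{(n)}_t[i,j])$ vanishes identically, and then gains the needed extra factor $n^{-1/2}(1+\Indi{\{\alpha=2\}}\log n)^{1/2}$ from $\E[|\dot{Z}^{(n)}_t[i]-\dot{Z}^{(n)}_t[i,j]|]$ in the surviving difference term; this is precisely where the exponent $\tfrac32$ on the logarithm comes from. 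Finally, a smaller point: you invoke Lemma \ref{hanalysis} to replace $h''_V$ by $h''_{\dot{V}[i]}$, but that lemma controls only $h'_{\gamma}-h'_{\tilde\gamma}$; the paper never performs this substitution inside $T_{1,n}$ (it is confined to the separate term $R_{1,n}$), and your proof would need an additional estimate on second derivatives to justify it.
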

\begin{proof}
See Supplementary Material. 
\end{proof}

The combination of Proposition \ref{prop: T negl} with \eqref{r1nst} and \eqref{termT2} completes the proof of Theorem
\ref{th1}.

}

\subsection{{\rev Treatment of the term $R_{1,n}$}} \label{sec5.3}
Combining \eqref{eq:R1ndef} and \eqref{eq:TayloresoltoSteineq}, we obtain 
\begin{align*}
|R_{1,n}|
  &\leq R_{1,1,n}+R_{1,2,n},
\end{align*}
where 
\begin{align}
R_{1,1,n}
  &:=\Big|\frac{C}{\sqrt{n}}\sum_{i=1}^{\lfloor nt \rfloor } \E\left[{|f_n( {\bm{X}}_{\frac{i-1}{n}} ,
\mathcal{I}_{i,n})|}|V-\dot{V}[i]|^{2}\right]\Big|\label{eq:E1n1def}\\
R_{1,2,n}
  &:=\Big|\frac{C}{\sqrt{n}}\sum_{i=1}^{\lfloor nt \rfloor }\E\left[f_n( {\bm{X}}_{\frac{i-1}{n}} ,
\mathcal{I}_{i,n})(V-\dot{V}[i])\mathcal{V}[\psi](\dot{Z}_{t}^{(n)}[i])\right] \Big| \label{eq:E1n2def}.
\end{align}
%{\color{blue} 
%We will now focus on $R_{1,2,n}$ and demonstrate two distinct upper bounds for it. We will later decide whether to use the first or the second bound based on whether $\textbf{H}_2$ is satisfied or not. } 
Using \eqref{eq:E1n2def} and \eqref{eq:VminusVdotTaylor}, we get 

\begin{align}{\label{eq: 5.14.5 R12n}}
R_{1,2,n}
  &{\rev \leq \left|\frac{C}{\sqrt{n}}\sum_{i=1}^{\lfloor nt \rfloor }\sum_{\ell=1}^{d}\int_{\frac{i-1}{n}}^{t}
	\E\left[f_n ( {\bm{X}}_{\frac{i-1}{n}},\mathcal{I}_{i,n}) (X_{s}^{(\ell)}-\dot{X}_{s}^{(\ell)}[i])\frac{\partial \mathfrak{g}}{\partial x_{\ell}}(\dot{\bm{X}}_{s}[i])\mathcal{V}[\psi](\dot{Z}_{t}^{(n)}[i])\right]ds \right|	} \nonumber \\	
	&+ \frac{C}{\sqrt{n}} \sum_{i=1}^{\lfloor nt \rfloor }\E\left[{\int_0^t\left(1\wedge \left \|\bm{X}_{s}-\dot{\bm{X}}_{s}[i]\right \|^2\right)ds} \left|\mathcal{V}[\psi](\dot{Z}_{t}^{(n)}[i]) \right|  \right], 
\end{align}
while \eqref{eq:E1n2def} and \eqref{eq:VminusVdotTaylorone} provide
\begin{align}{\label{eq: 5.14.75 R12n}}
R_{1,2,n}
  &\leq 	\frac{C}{\sqrt{n}}\sum_{i=1}^{\lfloor nt \rfloor }\E\left[{\int_0^t\left(1\wedge \left \|\bm{X}_{s}-\dot{\bm{X}}_{s}[i]\right \|\right)ds} \Big|\mathcal{V}[\psi](\dot{Z}_{t}^{(n)}[i]) \Big|  \right] \\
  & \leq \frac{C}{\sqrt{n}}\sum_{i=1}^{\lfloor nt \rfloor }\E\left[{\int_0^t\left(1\wedge \left \|\bm{X}_{s}-\dot{\bm{X}}_{s}[i]\right \|\right)ds} \right], \nonumber 
\end{align}
{\rev where we have used that $\|\mathcal{V}[\psi]\|_{\infty}\leq \|\psi^{\prime\prime\prime}\|_{\infty}$.}

\noindent
We now split the argument into two regimes according to whether $g$ satisfies condition $\bm{H}_2(\alpha)$ or not. 

\subsubsection{Case $\alpha\in(0,2)$ under condition $\bf{H}_1(\alpha)$}
 Assume that $\alpha\in(0,2)$ and hypothesis $\bm{H}_{1}(\alpha)$ is satisfied. Here we use the bound gathered in \eqref{eq: 5.14.75 R12n}. 
A combination of hypothesis $\bm{H}_{1}(\alpha)$, the definition of $\bm{X}_{s}-\dot{\bm{X}}_{s}[i]$ as in \eqref{eq:1new} and \eqref{eq:exponewedgexv2} in Lemma \ref{eq:ineqoneminx}, gives the estimate
\begin{align*}
\E\left[ {\int_0^t\left(1\wedge \left \|\bm{X}_{s}-\dot{\bm{X}}_{s}[i]\right \|\right)ds}  \right]
  &\leq C{(n^{-\frac{1}{\alpha}}+n^{-1}\left(1+\Indi{\{\alpha=1\}}\log(n)\right))},
\end{align*}
yielding 
\begin{align}\label{eq:R12n4t23}
R_{1,2,n}
  &\leq C(n^{\frac{1}{2}-\frac{1}{\alpha}}+n^{-1/2}\left(1+\Indi{\{\alpha=1\}}\log(n)\right)).
\end{align}
The term $R_{1,1,n}$ can be handled in a similar fashion, combining \eqref{eq:VminusVdotTaylorone} and the boundedness of $f_{n}$, giving the inequality 
\begin{align}\label{eq:R12n4t24}
R_{1,1,n}
  &\leq C(n^{\frac{1}{2}-\frac{1}{\alpha}}+n^{-1/2}\left(1+\Indi{\{\alpha=1\}}\log(n)\right)).
\end{align}
From here it follows that we have the estimate
\begin{align}\label{eq:R1nfinalboundmay}
R_{1,n}
  &\leq 	C\left(n^{\frac{1}{2}-\frac{1}{\alpha}}+ n^{-1/2}(1+\Indi{\{\alpha=1\}}\log(n))\right),
\end{align}
{\rev and hence $R_{1,n}\leq Cr_n$.}

\subsubsection{Case $\alpha\in(1,2]$ under condition $\bf{H}_2(\alpha)$}
\noindent {\rev Assume that $\alpha\in(1,2]$ and hypothesis {$\bm{H}_2(\alpha)$} is satisfied. We consider \eqref{eq: 5.14.5 R12n} and split the integral between $\frac{i-1}{n}$ and $t$ into two parts. This division is based on the different behavior exhibited by $X_{s}^{(\ell)}-\dot{X}_{s}^{(\ell)}[i]$ depending on whether $s$ is greater than $\frac{i+ m}{n}$ or not. Consequently, we can translate \eqref{eq: 5.14.5 R12n} to the following: }
\begin{align*}
R_{1,2,n}
  &\leq \Big|\frac{{\rev C}}{\sqrt{n}}\sum_{i=1}^{\lfloor nt \rfloor }\sum_{\ell=1}^{d}\int_{\frac{i+m}{n}}^{t}
	\E\left[f_n( {\bm{X}}_{\frac{i-1}{n}},\mathcal{I}_{i,n}){ (X_{s}^{(\ell)}-\dot{X}_{s}^{(\ell)}[i])}\frac{\partial\mathfrak{g}}{\partial x_{\ell}}(\dot{\bm{X}}_{s}[i])\mathcal{V}[\psi](\dot{Z}_{t}^{(n)}[i])\right]ds \Big|	\\
   &+ \Big|\frac{{\rev C}}{\sqrt{n}}\sum_{i=1}^{\lfloor nt \rfloor }\sum_{\ell=1}^{d}\int_{\frac{i-1}{n}}^{\frac{i+m}{n}}
	\E\left[f_n( {\bm{X}}_{\frac{i-1}{n}},\mathcal{I}_{i,n}){ (X_{s}^{(\ell)}-\dot{X}_{s}^{(\ell)}[i])}\frac{\partial\mathfrak{g}}{\partial x_{\ell}}(\dot{\bm{X}}_{s}[i])\mathcal{V}[\psi](\dot{Z}_{t}^{(n)}[i])\right]ds \Big|	\\
	&+ \frac{C}{\sqrt{n}}\sum_{i=1}^{\lfloor nt \rfloor }\E\left[{\int_0^t\left(1\wedge \left \|\bm{X}_{s}-\dot{\bm{X}}_{s}[i]\right \|^2\right)ds}\right],  
\end{align*}
where we used again $\left \| \mathcal{V}[\psi] \right \|_\infty < \infty$.
Recall the definition of $\mathcal{J}_{i,n}=(\mathcal{J}_{i,n}^{(1)},\dots,\mathcal{J}_{i,n}^{(d)})$, given by ${\mathcal{J}}_{i,n}^{(l)} ={X}^{(l)}_{\frac{i+m}{n}}-{X}^{(l)}_{\frac{i-1}{n}}$. Using \eqref{eq:1new} and the uniform boundedness of $f_n$ and $\mathfrak{g}'$, we deduce the bound

\begin{align}{\label{eq: star}}
R_{1,2,n}
  &\leq \Big|\frac{{\rev C}}{\sqrt{n}}\sum_{i=1}^{\lfloor nt \rfloor }\sum_{\ell=1}^{d}\int_{\frac{i{+m}}{n}}^{t}
	\E\left[f_n( {\bm{X}}_{\frac{i-1}{n}},\mathcal{I}_{i,n}){\mathcal{J}_{i,n}^{(\ell)}}\frac{\partial\mathfrak{g}}{\partial x_{\ell}}(\dot{\bm{X}}_{s}[i])\mathcal{V}[\psi](\dot{Z}_{t}^{(n)}[i])\right]ds \Big|	\\	
	&{+\frac{C}{\sqrt{n}}+} \frac{C}{\sqrt{n}}\sum_{i=1}^{\lfloor nt \rfloor }\E\left[{\int_0^t\left(1\wedge \left \|\bm{X}_{s}-\dot{\bm{X}}_{s}[i]\right \|^2\right)ds}\right]  \nonumber,
\end{align}
{\rev where in order to get the second bound on the right hand side we have used that, for $\alpha > 1$, $\E[\left \|\bm{X}_{s}-\dot{\bm{X}}_{s}[i]\right \|]$ is bounded. }
%\begin{align}\label{eq:R12n4t}
%R_{1,2,n}
%  &\le \frac{C}{\sqrt{n}}\sum_{i=1}^{\lfloor nt \rfloor }\E\left[1\wedge\left \|{\mathcal{J}}_{i,n} \right \|\right].
%\end{align}

Then, conditioning over $\mathcal{F}^{(i)}$ as defined at \eqref{sialg}, and using that $ \E[f_n({\bm{X}}_{\frac{i-1}{n}},\mathcal{I}_{i,n})\mathcal{J}_{i,n}^{(\ell)}|\mathcal{F}^{(i)}]= \E[f_n({\bm{X}}_{\frac{i-1}{n}},\mathcal{I}_{i,n})\mathcal{J}_{i,n}^{(\ell)}|\mathcal{F}_{\frac{i-1}{n}}] = 0$, 
\begin{align}{\label{eq: H2}}
\E\left[f_n({\bm{X}}_{\frac{i-1}{n}},\mathcal{I}_{i,n}){\mathcal{J}}_{i,n}^{(\ell)}\frac{\partial\mathfrak{g}}{\partial x_{\ell}}\left(\dot{\bm{X}}_{s}[i]\right)\mathcal{V}[\psi]\left(\dot{Z}_{t}^{(n)}[i]\right)\right]=0.
\end{align}
Hence, by \eqref{eq: star} 
\begin{align}\label{eqR12nboundwedge}
R_{1,2,n}
  &\leq {\rev \frac{C}{\sqrt{n}} }\left( 1+	\sum_{i=1}^{\lfloor nt \rfloor }\E\left[{\int_0^t\left(1\wedge \left \|\bm{X}_{s}-\dot{\bm{X}}_{s}[i]\right \|^2\right)ds}\right]\right).
\end{align}
Condition $\bm{H}_{1}(\alpha)$  combined with the fact that $\bm{X}_{s}-\dot{\bm{X}}_{s}[i]$ is as in \eqref{eq:1new} allows us to apply Lemma \ref{eq:ineqoneminx}. It implies the required bound on $R_{1,2,n}$: 
$$ {\rev R_{1,2,n} \le \frac{C}{\sqrt{n}}(1 + \Indi{ \{ \alpha = 2 \} } \log(n))}$$
{\rev and the logarithm can be removed if $\textbf{H}_3$ holds true.} {\rev Indeed, upon satisfying condition $\bm{H}_{3}$, it becomes possible to utilize the inequality derived in \eqref{eq:exponewedgex2} {together with the relation $1\wedge |x|^2\leq |x|\wedge |x|^2$,} as presented in Lemma \ref{eq:ineqoneminx}, in lieu of the one found in \eqref{eq:exponewedgex1wemma}. This substitution yields the desired outcome without necessitating the logarithm. }
%\begin{align}\label{eq:masterinclusion}
%\sqrt{n}\E\left[(1\wedge\|\bm{X}_{1/n}\|^2)\right]\in\mathcal{N}.
%\end{align}

%\noindent The term $R_{1,2,n}$ under $\bm{H}_2$ is bounded by an element that belongs to $\mathcal{N}$ in virtue of \eqref{eqR12nboundwedge}.
For handling $R_{1,1,n}$ as in \eqref{eq:E1n1def}, we use \eqref{eq:VminusVdotTaylorone} to get
\begin{align}\label{eq:VminusVdotTaylor2}
\left|V-\dot{V}[i]\right|^2
  &\leq C {\int_0^t\left(1\wedge \left \|\bm{X}_{s}-\dot{\bm{X}}_{s}[i]\right \|\right)ds}. 
\end{align}
Consequently, by \eqref{eq:E1n1def} and the arguments above we conclude that $R_{1,1,n},R_{1,2,n}$ (and so $R_{1,n}$) are bounded by $c n^{-1/2}(1 + \Indi{ \{ \alpha = 2 \} } \log(n))$ under the hypothesis $\bm{H}_{2}(\alpha)$ and without the logarithm if $\bm{H}_{3}$ holds true. \\
\\
{\rev As a result, $\sup_{\psi \in \mathcal{H}}|R_{1,n}|\leq Cr_n$ in all cases.}

\subsection{Analysis of $T_{2,n}$} \label{sec5.4}
We are left to study the term {$T_{2.n}$}, which is the {\rev dominating} one. We apply the change of variable $\Tilde{j}:= j - i$. Then, 
\begin{align*}
& \frac{1}{n} \sum_{i=1}^{\lfloor n t \rfloor} f_{n} ( {\bm{X}}_{\frac{i-1}{n}} ,\mathcal{I}_{i,n}) \sum_{j = i - m}^{i + m} f_n ( {\bm{X}}_{\frac{j-1}{n}} ,
\mathcal{I}_{j,n}) \\
%& = \sum_{\tilde{j} =- m}^{ m}  \frac{1}{n} \sum_{i=1}^{\lfloor n t \rfloor} f_{n} ( {\bm{X}}_{\frac{i-1}{n}} ,\mathcal{I}_{i,n})  f_n ( {\bm{X}}_{\frac{i-1 + \tilde{j}}{n}} , \mathcal{I}_{i + \tilde{j},n})\\
& = \sum_{\tilde{j} =- m}^{ m} \sum_{i=1}^{\lfloor n t \rfloor} \int_{\frac{i-1}{n}}^{\frac{i}{n}} f_{n} ( {\bm{X}}_{\frac{i-1}{n}} ,\mathcal{I}_{i,n})  f_n ( {\bm{X}}_{\frac{i-1 + \tilde{j}}{n}} , \mathcal{I}_{i + \tilde{j},n}) ds.
\end{align*}
%\textcolor{red}{We observe that, for $m = 0$, the sum above coincides with the term for which $\Tilde{j} = 0$, that is $\sum_{i=1}^{\lfloor n t \rfloor} \int_{\frac{i-1}{n}}^{\frac{i}{n}} f_{n}^2 ( {\bm{X}}_{\frac{i-1}{n}} ,\mathcal{I}_{i,n}) ds$.}\\
 In order to determine the behavior of $T_{2,n}$, it suffices to bound the Riemann approximation
\begin{align*}
R^{2, n}& = \E \Big[h''_V (Z_{t}^{(n)}) \sum_{\tilde{j} =- m}^{ m} \sum_{i=1}^{\lfloor n t \rfloor} \int_{\frac{i-1}{n}}^{\frac{i}{n}} \Big{(} f_{n} ( {\bm{X}}_{\frac{i-1}{n}} ,\mathcal{I}_{i,n})  f_n ( {\bm{X}}_{\frac{i-1 + \tilde{j}}{n}} , \mathcal{I}_{i + \tilde{j},n}) \\
& -f_n (\bm{X}_s, a_n \Tilde{\bm{X}}_{\frac{1}{n}}^{(1)}, ... , a_n \Tilde{\bm{X}}_{\frac{1}{n}}^{(m)} )f_n (\bm{X}_{s + \frac{\Tilde{j}}{n}}, a_n \Tilde{\bm{X}}_{\frac{1}{n}}^{(1 + \Tilde{j})}, ... , a_n \Tilde{\bm{X}}_{\frac{1}{n}}^{(m + \Tilde{j})} ) \Big{)} ds \Big].
\end{align*}
%\textcolor{red}{with the abuse of notation $f_n (\bm{X}_s, a_n \Tilde{\bm{X}}_{\frac{1}{n}}^{(1)}, ... , a_n \Tilde{\bm{X}}_{\frac{1}{n}}^{(m)} ) = f_n (\bm{X}_{s + \frac{\Tilde{j}}{n}}, a_n \Tilde{\bm{X}}_{\frac{1}{n}}^{(1 + \Tilde{j})}, ... , a_n \Tilde{\bm{X}}_{\frac{1}{n}}^{(m + \Tilde{j})} ) = f_n (\bm{X}_{s}, a_n \Tilde{\bm{X}}_{\frac{1}{n}}^{(1)} )$ for $m= 0$.}
Adding and removing $f_n (\bm{X}_s, a_n \Tilde{\bm{X}}_{\frac{1}{n}}^{(1)}, ... , a_n \Tilde{\bm{X}}_{\frac{1}{n}}^{(m)} )f_n ( {\bm{X}}_{\frac{i-1 + \tilde{j}}{n}} , \mathcal{I}_{i + \tilde{j},n})$ and using the mean value theorem and the boundedness of $\left \| h''_V \right \|_\infty$, $\left \| \partial_x f \right \|_\infty$, we obtain the following bound: 
\begin{align*}
|R^{2,n}| & \le C \sum_{i=1}^{\lfloor n t \rfloor} \int_{\frac{i-1}{n}}^{\frac{i}{n}} \E[ 1\wedge\|\bm{X}_{s - \frac{i-1}{n}}\|] ds \\
& \le C t {\rev n} \int_0^{1/n}\E[ 1\wedge\|\bm{X}_{s}\|] ds \\
& {\rev \le C t n \int_0^{1/n}(n^{-\frac{1}{\alpha}} + n^{-1}(1 + \Indi{\{\alpha = 1\}} \log(n))) ds } \\
& {\rev \le C \left(n^{- \frac{1}{\alpha}} +  n^{-1}(1 + \Indi{\{\alpha = 1\}} \log(n)) \right)},
\end{align*}
{\rev where we have employed Equation \eqref{eq:exponewedgexv2} in Lemma \ref{eq:ineqoneminx}.} {\rev It 
clearly gives $\sup_{\psi \in \mathcal{H}}|R^{2,n}| \le C r_n$.} 
%Finally, we notice that if $\tilde{j}=-m,m$ \textcolor{red}{and $m \neq 0$}, then
%\begin{align*}
%\E[ \int_{0}^{t} f_n (\bm{X}_s, a_n \Tilde{\bm{X}}_{\frac{1}{n}}^{(1)}, ... , a_n \Tilde{\bm{X}}_{\frac{1}{n}}^{(m)} )f_n (\bm{X}_{s + \frac{\Tilde{j}}{n}}, a_n \Tilde{\bm{X}}_{\frac{1}{n}}^{(1 + \Tilde{j})}, ... , a_n \Tilde{\bm{X}}_{\frac{1}{n}}^{(m + \Tilde{j})} ) ds
%\ |\ \bm{X} ds]=0,
%\end{align*}
%so that the term 
%\begin{align*}
%\E[h''_V (Z_{t}^{(n)}) 
%\sum_{\tilde{j} =- m}^{ m}\int_{0}^{t} f_n (\bm{X}_s, a_n \Tilde{\bm{X}}_{\frac{1}{n}}^{(1)}, ... , a_n \Tilde{\bm{X}}_{\frac{1}{n}}^{(m)} )f_n (\bm{X}_{s + \frac{\Tilde{j}}{n}}, a_n \Tilde{\bm{X}}_{\frac{1}{n}}^{(1 + \Tilde{j})}, ... , a_n \Tilde{\bm{X}}_{\frac{1}{n}}^{(m + \Tilde{j})} ) \textcolor{red}{\Indi{m \ge 1}} ds]
%\end{align*}
%coincides with 
%\begin{align*}
%\E[h''_V (Z_{t}^{(n)}) 
%\sum_{\tilde{j} =- m+1}^{ m-1}\int_{0}^{t} f_n (\bm{X}_s, a_n \Tilde{\bm{X}}_{\frac{1}{n}}^{(1)}, ... , a_n \Tilde{\bm{X}}_{\frac{1}{n}}^{(m)} )f_n (\bm{X}_{s + \frac{\Tilde{j}}{n}}, a_n \Tilde{\bm{X}}_{\frac{1}{n}}^{(1 + \Tilde{j})}, ... , a_n \Tilde{\bm{X}}_{\frac{1}{n}}^{(m + \Tilde{j})} ) \textcolor{red}{\Indi{m \ge 1}} ds].
%\end{align*}
Finally, we observe that
\begin{align*}
R^{2, 2,n}
  &:= \Big|\E \Big[h''_V (Z_{t}^{(n)}) \int_{0}^{t} \Big{(} 
\sum_{\tilde{j} =- m}^{ m} f_n \Big(\bm{X}_s, a_n \Tilde{\bm{X}}_{\frac{1}{n}}^{(1)}, ... , a_n \Tilde{\bm{X}}_{\frac{1}{n}}^{(m)} \Big)\\
&\times f_n \Big(\bm{X}_{s + \frac{\Tilde{j}}{n}}, a_n \Tilde{\bm{X}}_{\frac{1}{n}}^{(1 + \Tilde{j})}, ... , a_n \Tilde{\bm{X}}_{\frac{1}{n}}^{(m + \Tilde{j})} \Big) ds
-\int_{0}^{t} \mathfrak{g}(\bm{X}_s) ds \Big{)}\Big] \Big|
\end{align*}
satisfies 
\begin{align*}
R^{2, 2,n}
  & \leq  C\sup_{\bm{x}\in\R^{d}}|\mathfrak{g}_n(\bm{x})-\mathfrak{g}(\bm{x})|.
\end{align*}

\noindent {\rev Putting things together we conclude that 
$$\sup_{\psi \in \mathcal{H}} \Big|\E\left[T_{2,n}
  - V h''_V (Z_{t}^{(n)}) \right]\Big| \leq Cr_n$$
in all cases.} The proof of \eqref{termT2} is now complete.
\qed

%\begin{funding}
%The authors
%gratefully acknowledge financial support of ERC Consolidator Grant 815703
%``STAMFORD: Statistical Methods for High Dimensional Diffusions''.
%\end{funding}

\section{Supplementary material}
 {\rev This supplement is dedicated to the proof of Proposition \ref{prop: T negl}, Lemmas \ref{hanalysis} and \ref{lemma: L2 z-zdot} previously mentioned, the complexity of which led us to include their proofs in the appendix.  Additionally, it includes the proof of Theorem \ref{th2} and the presentation along with proofs of certain technical lemmas that have been frequently referenced in the main body of the manuscript.}

\section{Proof of Proposition \ref{prop: T negl}}\label{s: proof T negl}
\setcounter{equation}{0}
\renewcommand{\theequation}{\thesection.\arabic{equation}}
{\rev In this section, our objective is to establish Proposition \ref{prop: T negl}, which entails demonstrating the negligible behavior of the three terms $T_{1,n}$, $T_{3, n}$, and $T_{4, n}$. Specifically, we aim to show that these terms tend towards zero at the stated rate. As outlined in the statements of Theorem \ref{th1} and Proposition \ref{prop: T negl}, the convergence rates vary depending on the set of assumptions under consideration. Consequently, we will divide the proof of the proposition into two distinct parts. The initial subsection will focus on the proof under condition $\bm{H_1}(\alpha)$ {when $\alpha\neq 2$}. After that, in the subsequent subsection, we will delve into 
the case where $\alpha \in (1, 2]$ and the stronger condition $\bm{H}_{2}(\alpha)$ is satisfied. }

\subsection{Proof under condition $\bf{H_1}(\alpha)$}%Case I: $\alpha \in (0,2)$, without assuming $\bm{H}_{2}$}
{\rev In this section, we will individually analyze the three terms $T_{1,n}$, $T_{3,n}$, and $T_{4, n}$. We commence by focusing on $T_{1, n}$, which will prove to be the most intricate term to handle.}

\subsubsection{Analysis of $T_{1,n}$}
{Consider the decomposition}
\begin{align*}
{T_{1,n}}
  & =  {T_{1, 2,n}-T_{1,1,n}}.
\end{align*}
where 
\begin{align*}
{T_{1,1,n}}
  &:=\frac{1}{n} \sum_{i=1}^{\lfloor nt\rfloor}\sum_{j=1}^{\lfloor nt\rfloor} \Indi{\{j \ge i +m + 1\}} \E\Big{[} f_n( {\bm{X}}_{\frac{i-1}{n}},
\mathcal{I}_{i,n}) \Big{(}   h''_V (Z_t^{(n)}) - h''_V (\dot{Z}_{t}^{(n)}[i])\Big{)}\\
& \times \Big{(} f_n( {\bm{X}}_{\frac{j-1}{n}} ,
\mathcal{I}_{j,n}) - f_n ( \dot{\bm{X}}_{\frac{j-1}{n}}[i] ,
\dot{\mathcal{I}}_{j,n}[i])   \Big{)} \Big{]}
\end{align*}
and 
\begin{align*}
T_{1,2,n}
&:=\frac{1}{n} \sum_{i=1}^{\lfloor nt\rfloor}\sum_{j=1}^{\lfloor nt \rfloor} \Indi{\{j \ge i + m + 1\}}  \E \Big{[} f_n( {\bm{X}}_{\frac{i-1}{n}} ,
\mathcal{I}_{i,n})   h''_V ({Z}_{t}^{(n)})\\
& \times \Big{(} (f_n ( {\bm{X}}_{\frac{j-1}{n}} ,
\mathcal{I}_{j,n}) - f_n ( \dot{\bm{X}}_{\frac{j-1}{n}}[i] ,
\dot{\mathcal{I}}_{j,n}[i])  ) \Big{)} \Big{]}.
\end{align*}
{\rev Our motivation for splitting the term $T_{1,n}$ as described above lies in the observation that this approach allows us to manage $h''_V ({Z}_{t}^{(n)})$, which remains independent of $i$, as opposed to dealing with $h''_V (\dot{Z}_{t}^{(n)}[i])$. This division proves convenient since it permits us to extract $h''_V ({Z}_{t}^{(n)})$ from the summation, simplifying the process as we can bound it with a constant. Indeed,} because of the boundedness of the second derivatives of $h_V$, we have
\begin{align}{\label{eq: beginning T1}}
{|T_{1,2,n}|}
   \le C\E \Big[ \Big| \frac{1}{n} \sum_{i=1}^{\lfloor nt\rfloor }\sum_{j=1}^{\lfloor nt \rfloor } \Indi{\{j \ge {i+m+1}\}}f_n( {\bm{X}}_{\frac{i-1}{n}},\mathcal{I}_{i,n})  \left(f_n( {\bm{X}}_{\frac{j-1}{n}} ,
\mathcal{I}_{j,n}) - f_n( \dot{\bm{X}}_{\frac{j-1}{n}}[i] ,
\dot{\mathcal{I}}_{j,n}[i])  \right)\Big| \Big].
\end{align}
In order to deal with the expectation above we apply the Lyapunov inequality $\|\cdot\|_{1} \le \|\cdot\|_{2}$, so it suffices to bound
\begin{align}{\label{eq: square T1}}
 {\rev |T_{1,2,n}|^2} & \le C \E \Big[ \Big| \frac{1}{n} \sum_{i=1}^{\lfloor nt\rfloor }\sum_{j=1}^{\lfloor nt\rfloor } \Indi{\{j \ge i +m + 1\}}  f_n( {\bm{X}}_{\frac{i-1}{n}},\mathcal{I}_{i,n}) \left(f_n ( {\bm{X}}_{\frac{j-1}{n}} ,
\mathcal{I}_{j,n}) - f_n ( \dot{\bm{X}}_{\frac{j-1}{n}}[i] ,
\dot{\mathcal{I}}_{j,n}[i])  \right) \Big|^2 \Big] \nonumber \\
& = \frac{1}{n^2} \sum_{1 \le i_1, i_2, j_1, j_2 \leq \lfloor nt\rfloor}
 \Indi{\{j_1 \ge i_1 + m + 1, j_2 \ge i_2 + m + 1 \}} \E \Big[ f_n( {\bm{X}}_{\frac{i_1-1}{n}},\mathcal{I}_{i_1,n})f_n( {\bm{X}}_{\frac{i_2-1}{n}},\mathcal{I}_{i_2,n}) \\
 & \times \Big{(}f_n ( {\bm{X}}_{\frac{j_1-1}{n}} ,
\mathcal{I}_{j_1,n}) - f_n ( \dot{\bm{X}}_{\frac{j_1-1}{n}}[i_1] ,
\dot{\mathcal{I}}_{j_1,n}[i_1])  \Big{)} \nonumber \\
& \times 
\Big{(}f_n ( {\bm{X}}_{\frac{j_2-1}{n}} ,
\mathcal{I}_{j_2,n}) - f_n ( \dot{\bm{X}}_{\frac{j_2-1}{n}}[i_2] ,
\dot{\mathcal{I}}_{j_2,n}[i_2])  \Big{)} \Big]. \nonumber
\end{align}
 {Next, by making use of \eqref{eq:1new}, we will show that the non-vanishing terms in the right-hand side of the above sum must satisfy $|j_1-j_2|\leq m$. To show this, we consider the complementary configurations: $j_1 + m < j_2$ and $j_2+m<j_1$. By \eqref{eq:1new}, for all $j> i+m$, the increments $\bm{X}_{j/n}-\bm{X}_{(j-1)/n}$ and $\bm{\dot{X}}_{j/n}{\revch [i]}-\bm{\dot{X}}_{(j-1)/n}{\revch [i]}$ coincide, while under the same regime, it holds that  
 \begin{equation}{\label{eq: 6.2.25}}
   {\bm{X}}_{\frac{j_k-1}{n}} - \dot{\bm{X}}_{\frac{j_k-1}{n}}[i_k] = 
\bm{X}_{\frac{i + m}{n}} - \bm{X}_{\frac{i-1}{n}}
-\bm{\tilde{X}}_{\frac{i + m}{n}} + \bm{\tilde{X}}_{\frac{i-1}{n}}.  
 \end{equation}
Assuming without loss of generality that $j_{1}+m< j_{2}$, we can write
\begin{align*}
& \E\Big[ f_n( {\bm{X}}_{\frac{i_1-1}{n}},\mathcal{I}_{i_1,n})f_n( {\bm{X}}_{\frac{i_2-1}{n}},\mathcal{I}_{i_2,n}) { \Big{(}}f_n ( {\bm{X}}_{\frac{j_1-1}{n}} ,
\mathcal{I}_{j_1,n}) - f_n ( \dot{\bm{X}}_{\frac{j_1-1}{n}}[i_1] ,
\dot{\mathcal{I}}_{j_1,n}[i_1])  \Big{)} \\
& \times \Big{(}f_n ( {\bm{X}}_{\frac{j_2-1}{n}} ,
\mathcal{I}_{j_2,n}) - f_n( \dot{\bm{X}}_{\frac{j_2-1}{n}}[i_2] ,
\dot{\mathcal{I}}_{j_2,n}[i_2])  \Big{)}\Big] \\
& = \E \Big{[} f_n( {\bm{X}}_{\frac{i_1-1}{n}},\mathcal{I}_{i_1,n})f_n( {\bm{X}}_{\frac{i_2-1}{n}},\mathcal{I}_{i_2,n}) \Big{(}f_n ( {\bm{X}}_{\frac{j_1-1}{n}} ,
\mathcal{I}_{j_1,n}) - f_n ( \dot{\bm{X}}_{\frac{j_1-1}{n}}[i_1] ,
{\mathcal{I}}_{j_1,n})  \Big{)} \\
& \times \E\Big[\Big{(}f_n ( {\bm{X}}_{\frac{j_2-1}{n}} ,
\mathcal{I}_{j_2,n}) - f_n ( \dot{\bm{X}}_{\frac{j_2-1}{n}}[i_2] ,
{\mathcal{I}}_{j_2,n})  \Big{)}| \mathcal{F}^{(j_2)}\Big] \Big{]},
\end{align*}
 where $\mathcal{F}^{(j)}$ is defined by \eqref{sialg}. Using the fact that $\bm{X}_{(j_{2}-1)/n},\bm{\dot{X}}_{(j_{2}-1)/n}{\revch [i_2]}$ are independent of $\mathcal{F}^{(j_2)}$, we obtain

 \begin{align}{\label{eq: centered new}}
\E\Big[f_n ( {\bm{X}}_{\frac{j_2-1}{n}} ,
\mathcal{I}_{j_2,n})| \mathcal{F}^{(j_2)}\Big] 
=\E\Big[f_n ( \dot{\bm{X}}_{\frac{j_2-1}{n}}[i_2] ,
\dot{\mathcal{I}}_{j_2,n}[i_2])  | \mathcal{F}^{(j_2)}\Big]=0.
\end{align}
The configuration $j_{2}+m<j_{1}$ can be shown to be equal to zero by an analogous argument.}  The only case it remains to handle is $j_1 - m \le j_2 \le j_1 + m$. Notice that the right hand side of \eqref{eq: square T1} is 
\begin{align}\label{eq: T1 bound 3prel}
& \frac{1}{n^2} \sum_{1 \le i_1, i_2, j_1 \leq \lfloor nt\rfloor} \sum_{j_2 = j_1 - m}^{j_1 + m} \Indi{\{j_1 \ge i_1 +m + 1, j_2 \ge i_2 + m + 1 \}} \E\Big[ f_n( {\bm{X}}_{\frac{i_1-1}{n}},\mathcal{I}_{i_1,n})f_n( {\bm{X}}_{\frac{i_2-1}{n}},\mathcal{I}_{i_2,n}) \\
& \times \Big{(}f_n ( {\bm{X}}_{\frac{j_1-1}{n}} ,\mathcal{I}_{j_1,n}) - f_n ( \dot{\bm{X}}_{\frac{j_1-1}{n}}[i_1] ,
\dot{\mathcal{I}}_{j_1,n}[i_1])  \Big{)}  \Big{(}f_n ( {\bm{X}}_{\frac{j_2-1}{n}} ,
\mathcal{I}_{j_2,n}) - f_n ( \dot{\bm{X}}_{\frac{j_2-1}{n}}[i_2] ,
\dot{\mathcal{I}}_{j_2,n}[i_2])  \Big{)}\Big].\nonumber
\end{align}
{Taking into consideration the boundedness of $g$ and its first derivative, for every $i_1,i_2,j_1,j_2\in\N$, every   choice of $u=1,2$ and every  collection of vectors $\bm{\varepsilon}^{\ell,1},\bm{\varepsilon}^{\ell,2}$ with $\ell=0,\dots, {m+1}$, having  form $\bm{\varepsilon}^{\ell,u}=(\varepsilon_{1}^{\ell,u},\dots, \varepsilon_{d}^{\ell,u})$, we can write
\begin{align*}
|g(\bm{X}_{\frac{i_u-1}{n}}+\bm{\varepsilon}^{0,u},\mathcal{I}_{i_u,n}+(\bm{\varepsilon}^{1,u},\dots,\bm{\varepsilon}^{{m+1},u}))
-g(\bm{X}_{\frac{i_u-1}{n}},\mathcal{I}_{i_u,n})|
  \leq (2\|g\|_{\infty})\wedge\|Dg\|_{\infty}\sum_{i=1}^d\sum_{j=0}^{{m+1}}|\varepsilon_{i}^{j,u}|,
\end{align*}
with a similar bound being satisfied by replacing $g$ by  $f_{n}$. {Taking $\bm{\varepsilon}^{0,u}=\bm{\dot{X}}_{\frac{j_u-1}{n}}[i_u]-\bm{X}_{\frac{j_u-1}{n}}$ and $(\bm{\varepsilon}^{1,u},\dots, \bm{\varepsilon}^{{m+1},u})=\dot{\mathcal{I}}_{j_u,n}[i_u]-{\mathcal{I}}_{j_u,n}$, with $i_1,i_2,j_1, j_2$ as above, we get}
\begin{align*}
|\bm{\varepsilon}^\ell|
  &\leq a_n \sum_{u=1,2}{\sum_{l=0}^{{m+1}}}(\|\bm{X}_{\frac{i_u+l}{n}}-\bm{X}_{\frac{i_u+l-1}{n}}\|
  +\|\bm{\tilde{X}}_{\frac{i_u+l}{n}}-\bm{\tilde{X}}_{\frac{i_u+l-1}{n}}\|),
\end{align*}
for $\ell=1,\dots,{m+1}$. This allows us to conclude that \eqref{eq: T1 bound 3prel} is upper bounded by
$\mathcal{E}_{1}+\mathcal{E}_2$, where
\begin{align*}
\mathcal{E}_{1}
    &:= \frac{C}{n^2} \sum_{1 \le i_1, i_2, j_1 \leq \lfloor nt\rfloor} \sum_{j_2 = j_1 - m}^{j_1 + m} {\sum_{0\leq l_1,l_2\leq{m+1}}}\E
\left[\Big(1 \land \|{\bm{X}}_{\frac{i_1{+l_1-1}}{n}} - {\bm{X}}_{\frac{i_1-1}{n}}\|\Big)\Big(1 \land \|{\bm{X}}_{\frac{i_2{+l_2-1}}{n}} - {\bm{X}}_{\frac{i_2-1}{n}}\|\Big) \right] \\
\mathcal{E}_{2}
    &:=\frac{C}{n^2} \sum_{1 \le i_1, i_2, j_1 \leq \lfloor nt\rfloor} \sum_{j_2 = j_1 - m}^{j_1 + m} {\sum_{l_1,l_2=0}^{ {m+1}}}\E
\left[\Big(1 \land \|{\bm{X}}_{\frac{i_1{+l_1-1}}{n}} - {\bm{X}}_{\frac{i_1-1}{n}}\|\Big)\right] \E\left[\Big(1 \land \|{\bm{X}}_{\frac{i_2{+l_2-1}}{n}} - {\bm{X}}_{\frac{i_2-1}{n}}\|\Big) \right] .
\end{align*}
The term $\mathcal{E}_1$ can be upper bounded by
\begin{align*}
& \frac{C}{n^2} \sum_{1 \le i_1, i_2, j_1 \leq \lfloor nt\rfloor} \sum_{j_2 = j_1 - m}^{j_1 + m} {\sum_{0\leq l_1,l_2\leq {m+1}}}\E
\left[\Big(1 \land \|{\bm{X}}_{\frac{i_1{+l_1-1}}{n}} - {\bm{X}}_{\frac{i_1-1}{n}}\|\Big)\Big(1 \land \|{\bm{X}}_{\frac{i_2{+l_2-1}}{n}} - {\bm{X}}_{\frac{i_2-1}{n}}\|\Big) \right] \\
& \le \frac{C}{n} \left( {\sum_{1\leq i_1,i_2\leq   nt }  \Indi{\{|i_2-i_1|\leq m\}}{\sum_{0\leq l_1,l_2\leq {m+1}}} \E\left[\left(1 \land \|{\bm{X}}_{\frac{i_1+l_1-1}{n}} - {\bm{X}}_{\frac{i_1-1}{n}}\|\right)\left(1 \land \|{\bm{X}}_{\frac{i_2+l_2-1}{n}} - {\bm{X}}_{\frac{i_2-1}{n}}\|\right)\right]} \right.\nonumber\\
&\left.+ \sum_{1 \le i_1, i_2 \leq \lfloor nt\rfloor} {\Indi{\{|i_2 -i_1|>m\}} {\sum_{0\leq l_1,l_2\leq {m+1}}}\E\left[ \left(1 \land \|{\bm{X}}_{\frac{i_1+l_1-1}{n}} - {\bm{X}}_{\frac{i_1-1}{n}}\|\right) \left(1 \land \|{\bm{X}}_{\frac{i_2+l_2-1}{n}} - {\bm{X}}_{\frac{i_2-1}{n}}\|\right) \right]}\right)\nonumber.
\end{align*}
By writing 
$$\left(1 \land \|{\bm{X}}_{\frac{i+l-1}{n}} - {\bm{X}}_{\frac{i-1}{n}}\|\right)
  \leq \sum_{\ell=1}^l\left(1 \land \|{\bm{X}}_{\frac{i+\ell}{n}} - {\bm{X}}_{\frac{i+\ell-1}{n}}\|\right),$$
for $i,l\geq0$ arbitrary, we deduce that 
\begin{align}{\label{eq: T1 bound 3}}
|T_{1,2,n}|^2 
    & \le \frac{C}{n} \left( {\sum_{1\leq i_1,i_2\leq   nt }  \Indi{\{|i_2-i_1|\leq m\}} \E\left[\left(1 \land \|{\bm{X}}_{\frac{i_1-1}{n}} - {\bm{X}}_{\frac{i_1-1}{n}}\|\right)\left(1 \land \|{\bm{X}}_{\frac{i_2-1}{n}} - {\bm{X}}_{\frac{i_2-1}{n}}\|\right)\right]} \right.\\
&\left.+ \sum_{1 \le i_1, i_2 \leq  nt} {\Indi{\{|i_2 -i_1|>m\}}\E\left[ \left(1 \land \|{\bm{X}}_{\frac{i_1-1}{n}} - {\bm{X}}_{\frac{i_1-1}{n}}\|\right) \left(1 \land \|{\bm{X}}_{\frac{i_2-1}{n}} - {\bm{X}}_{\frac{i_2-1}{n}}\|\right) \right]} \right)+\mathcal{E}_{2}\nonumber.
\end{align}
}
The first term on the right is {of the order $(1/n)(1+\Indi{\{\alpha=2\}}\log(n))$  due to \eqref{eq:exponewedgex1 lemma}. }
%Regarding the first expectation, an analysis analogous to the proof of \eqref{eq:exponewedgex1} gives
%\begin{align}{\label{eq: T1 bound 4}}
 %\E[(1 \land |{\bm{X}}_{\frac{i}{n}} - {\bm{X}}_{\frac{i-1}{n}}|)^2]
  %& \le C n^{-1}(1+\Indi{\{\alpha=2\}})
%\end{align}
%in general and 
%\begin{align}{\label{eq: T1 bound 42}}
 %\E[(1 \land |{\bm{X}}_{\frac{i}{n}} - {\bm{X}}_{\frac{i-1}{n}}|)^2]
  %& \le C n^{-1}(1+\Indi{\{\alpha=2\}})
%\end{align}
%under the validity of condition $\bm{H}_3$. 
For the second term we use the independence of the increments of $\bm{X}$, which together with Lemma \ref{eq:ineqoneminx}, gives
\begin{align}\label{eq:minimumindep}
\E[(1 \land \|{\bm{X}}_{\frac{i_1}{n}} - {\bm{X}}_{\frac{i_1-1}{n}}\|) (1 \land \|{\bm{X}}_{\frac{i_2}{n}} - {\bm{X}}_{\frac{i_2-1}{n}}\| )]
  %&= \E[1 \land \|{\bm{X}}_{\frac{i_1}{n}} - {\bm{X}}_{\frac{i_1-1}{n}}\|] \E[ 1 \land \|{\bm{X}}_{\frac{i_2}{n}} - {\bm{X}}_{\frac{i_2-1}{n}}\|]\nonumber\\
  \le \E[1\wedge \|{\bm{X}}_{\frac{1}{n}}\|]^2.
\end{align}
By Lemma \ref{eq:ineqoneminx}, we thus obtain the bound {
\begin{align*}
|T_{1,2,n}|^2 
\le \mathcal{E}_2+
C \left( n^{1-\frac{2}{\alpha}}+n^{-1}(1+\Indi{\{\alpha=1\}}\log(n)^2+\Indi{\{\alpha=2\}}\log(n)) \right).
\end{align*}
The term $\mathcal{E}_2$ can be easily handled by combining  \eqref{eq:minimumindep} and Lemma \ref{eq:ineqoneminx} as before, yielding the final inequality

\begin{align}\label{eq:T1boundidk}
|T_{1,2,n}|^2 
\le 
C \left( n^{1-\frac{2}{\alpha}}+n^{-1}(1+\Indi{\{\alpha=1\}}\log(n)^2+\Indi{\{\alpha=2\}}\log(n)) \right).
\end{align}
}

\noindent We now deal with $T_{1,1,n}$. We begin writing
\begin{align*}
|T_{1,1,n}|
  &\le \frac{1}{\sqrt{n}} \sum_{i=1}^{\lfloor nt\rfloor}  \E \Big{[} \Big{|}   h''_V ({Z}_{t}^{(n)}) - h''_V (\dot{Z}_{t}^{(n)}[i])\Big{|} \Big{|}f_n( {\bm{X}}_{\frac{i-1}{n}} , \mathcal{I}_{i,n}) \Big{|} \\
& \times \Big| \frac{1}{\sqrt{n}} \sum_{j=1}^{\lfloor nt\rfloor} \Indi{\{j \ge i +m + 1\}} (f_n ( {\bm{X}}_{\frac{j-1}{n}} ,
\mathcal{I}_{j,n}) - f_n ( \dot{\bm{X}}_{\frac{j-1}{n}}[i] ,
\dot{\mathcal{I}}_{j,n}[i])) \Big | \Big{]}.
\end{align*}
Using Cauchy-Schwarz inequality and the boundedness of $g$, we get 
\begin{align}{\label{eq: T11 start}}
|T_{1,1,n}|
& \leq \frac{C}{\sqrt{n}} \sum_{i=1}^{\lfloor nt\rfloor } \E \Big{[} \Big{|}   h''_V ({Z}_{t}^{(n)}) - h''_V (\dot{Z}_{t}^{(n)}[i])\Big{|}^2 \Big{]}^{\frac{1}{2}} \\
& \times \E\Big{[} \Big{|}  \frac{1}{\sqrt{n}} \sum_{j=1}^{\lfloor nt\rfloor } \Indi{\{j \ge i +m +  1\}} \Big(f_n ( {\bm{X}}_{\frac{j-1}{n}} ,
\mathcal{I}_{j,n}) - f_n ( \dot{\bm{X}}_{\frac{j-1}{n}}[i] ,
\dot{\mathcal{I}}_{j,n}[i])  \Big)\Big{|}^2 \Big{]}^{\frac{1}{2}}. \nonumber
\end{align}
Regarding the first term in the product appearing above, Taylor development and the boundedness of the third derivatives of $h_V$ ensures that 
\begin{equation}{\label{eq: 0.5 deriv hV}}
\E \Big{[} \Big{|}    h''_V ({Z}_{t}^{(n)}) - h''_V (\dot{Z}_{t}^{(n)}[i])\Big{|}^2 \Big{]}^{\frac{1}{2}} \le C\E \Big{[} \Big{|}    {Z}_{t}^{(n)} - \dot{Z}_{t}^{(n)}[i]\Big{|}^2 \Big{]}^{\frac{1}{2}}{\rev \le C n^{- \frac{1}{2}}(1+\Indi{\{\alpha=2\}}\log(n))^\frac{1}{2}},
\end{equation}
{\rev thanks to Lemma \ref{lemma: L2 z-zdot}.}
To conclude the study of $T_{1, 1}$ we are left to bound the second term in the product on the right-hand side of \eqref{eq: T11 start}. We proceed in a similar way as in \eqref{eq: square T1}. First we write
\begin{align}{\label{eq: square T11}}
&\E \Big[ \Big|  \frac{1}{\sqrt{n}} \sum_{j=1}^{\lfloor nt\rfloor } \Indi{\{j \ge i +m +  1\}} \Big(f_n ( {\bm{X}}_{\frac{j-1}{n}} ,
\mathcal{I}_{j,n}) - f_n ( \dot{\bm{X}}_{\frac{j-1}{n}}[i] ,
\dot{\mathcal{I}}_{j,n}[i]) \Big ) \Big|^2 \Big] \nonumber \\
 & = \frac{1}{n} \sum_{1 \le j_1, j_2 \leq \lfloor nt\rfloor}
 \Indi{\{j_1, j_2 \ge i + m+ 1 \}} \E \Big[\Big{(}f_n ( {\bm{X}}_{\frac{j_1-1}{n}} ,
\mathcal{I}_{j_1,n}) - f_n ( \dot{\bm{X}}_{\frac{j_1-1}{n}}[i] ,
\dot{\mathcal{I}}_{j_1,n}[i])  \Big{)} \\
& \times \Big{(}f_n ( {\bm{X}}_{\frac{j_2-1}{n}} ,
\mathcal{I}_{j_2,n}) - f_n ( \dot{\bm{X}}_{\frac{j_2-1}{n}}[i] ,
\dot{\mathcal{I}}_{j_2,n}[i])  \Big{)} \Big]. \nonumber
\end{align}
As remarked below \eqref{eq: square T1}, the variables {\revch $f_n ( {\bm{X}}_{\frac{j-1}{n}} ,
\mathcal{I}_{j,n})$ and $f_n ( \dot{\bm{X}}_{\frac{j-1}{n}}[i] ,
\dot{\mathcal{I}}_{j,n}[i])$} are conditionally centered given $\mathcal{F}_{\frac{j-1}{n}}$. Hence, as both $j_1$ and $j_2$ are larger than $i + 1 + m$, if $j_1 + m < j_2$ or $j_2 + m < j_1$, the quantity above is equal to $0$.
%\begin{align*}
%& \frac{1}{n} \sum_{1 \le j_1, j_2 \leq \lfloor nt\rfloor}
 %\Indi{\{j_1, j_2 \ge i + 1 \}} \E\Big{[}\big{(}f_n ( {\bm{X}}_{\frac{j_1-1}{n}} ,
%\mathcal{I}_{j_1,n}) - f_n ( \dot{\bm{X}}_{\frac{j_1-1}{n}}[i] ,
%\dot{\mathcal{I}}_{j_1,n}[i])  \big{)} \\
%& \times \E [ \big{(}f_n ( {\bm{X}}_{\frac{j_2-1}{n}} ,
%\mathcal{I}_{j_2,n}) - f_n ( \dot{\bm{X}}_{\frac{j_2-1}{n}}[i] ,
%\dot{\mathcal{I}}_{j_2,n}[i])  \big{)} | \mathcal{F}_{\frac{j_2 - 1}{n}}] \Big{]} = 0.
%\end{align*}
%A similar proceeding in the case $j_2 < j_1$ shows that 
Thus, the only configuration for which it is not equal to zero is $j_1 - m \le j_2 \le j_1 + m$. In this case, the mean value theorem together with the boundedness of the partial derivatives of $g$, and so of the partial derivatives of $f$, implies that \eqref{eq: square T11} is upper bounded by
\begin{align*}
&\frac{1}{n} \sum_{1 \le j_1 \leq \lfloor nt\rfloor}
\sum_{j_2 = j_1 - m}^{j_1 + m} \Indi{\{j_1, j_2 \ge i +m + 1 \}} \E\left[(1 \land \|{\bm{X}}_{\frac{i}{n}} - {\bm{X}}_{\frac{i-1}{n}}\| )^2\right],
\end{align*}
where we have also used that ${\bm{X}}_{\frac{j-1}{n}} - \dot{\bm{X}}_{\frac{j-1}{n}}[i]$ {\revch is as in \eqref{eq: 6.2.25}} {\rev for $j \ge i + m + 1$}. {\rev By Lemma \ref{eq:ineqoneminx}, this term is upper bounded by  $Cn^{-1}(1+\Indi{\{\alpha=2\}}\log(n))$ and by $Cn^{-1}$ in the presence of $\textbf{H}_3$}. Applying the controls \eqref{eq: bound increment Z} and \eqref{eq: square T11} in \eqref{eq: T11 start}, we obtain that 
{
\begin{align}\label{eq:T1boundidkprime}
|T_{1,1,n}|
  &\leq Cn^{-1/2}(1+\Indi{\{\alpha=2\}}\log(n))
\end{align}
and under the presence of {$\textbf{H}_3$}, 
\begin{align*}
{|T_{1,1,n}|}
  &\leq Cn^{-1/2}.
\end{align*}
The inequalities \eqref{eq:T1boundidk} and \eqref{eq:T1boundidkprime} give the following bound
\begin{align}\label{Tqfinalboundfirstregime}
|T_{1,n}| 
\le 
C (n^{\frac{1}{2}-\frac{1}{\alpha}}+n^{-1/2}(1+\Indi{\{{\alpha\in\{1,2\}}\}}\log(n))).
\end{align}

}

\subsubsection{Analysis of {$T_{3,n}$}} 
{\rev Recall that} {$T_{3,n}$} is given by
\begin{align}{\label{eq: def T3}}
T_{3,n}& = {\rev \frac{1}{\sqrt{n}} \sum_{i=1}^{\lfloor nt \rfloor} \E \Big{[} f_n ( {\bm{X}}_{\frac{i-1}{n}} ,
\mathcal{I}_{i,n})  h'''_V (\dot{Z}_{t}^{(n)}[i] + \tau  (Z_{t}^{(n)} - \dot{Z}_{t}^{(n)}[i])) (Z_{t}^{(n)} - \dot{Z}_{t}^{(n)}[i])^2  \Big{]}.}
\end{align}
{In order to handle the right-hand side, we proceed as follows. First we observe that}  $\|h'''_V\|_{\infty}<\infty$ ensures that 
\begin{align*}
{|T_{3,n}|} 
  &\le C\sqrt{n} \max_{i} \E\left[(Z_{t}^{(n)} - \dot{Z}_{t}^{(n)}[i])^2\right].
\end{align*}
 From \eqref{eq: bound increment Z} it directly follows
\begin{equation}{\label{eq: end T3}}
{|T_{3,n}|}  \le C n^{-\frac{1}{2}}\left(1+\Indi{\{\alpha=2\}}\log(n)\right).
\end{equation}

\subsubsection{Analysis of ${T_{4,n}} $} 
Regarding the last term, we notice that
\begin{align}{\label{eq: new 1}}
| {T_{4,n}}| \le \frac{1}{n} \sum_{i=1}^{\lfloor n t \rfloor} \Big|\E \Big{[} \Big( h''_V (Z_{t}^{(n)})- h''_V (\dot{Z}_{t}^{(n)}[i])\Big) f_n ( {\bm{X}}_{\frac{i-1}{n}} ,\mathcal{I}_{i,n}) \sum_{j = i - m}^{i + m} f_n ( {\bm{X}}_{\frac{j-1}{n}} ,
\mathcal{I}_{j,n})  \Big{]} \Big|.   
\end{align}
We observe that from Taylor development and $\|h'''_V\|_{\infty}<\infty$, we clearly have that
\begin{equation}{\label{eq: z}}
\E \Big{[} \Big{|}    h''_V ({Z}_{t}^{(n)}) - h''_V (\dot{Z}_{t}^{(n)}[i])\Big{|}^2 \Big{]}^{\frac{1}{2}} \le C\E \Big{[} \Big{|}    {Z}_{t}^{(n)} - \dot{Z}_{t}^{(n)}[i]\Big{|}^2 \Big{]}^{\frac{1}{2}}.
\end{equation}
Then, the boundedness of $f_n$ together with Lyapunov inequality and equations \eqref{eq: 0.5 deriv hV} and \eqref{eq: z} above provide
%\frac{1}{n} \sum_{i=1}^{\lfloor n t \rfloor} \Big|\E \Big{[} ( h''_V (Z_{t}^{(n)})- h''_V (\dot{Z}_{t}^{(n)}[i])) f_n ( {\bm{X}}_{\frac{i-1}{n}} ,\mathcal{I}_{i,n}) \sum_{j = i - m}^{i + m} f_n ( {\bm{X}}_{\frac{j-1}{n}} ,\mathcal{I}_{j,n})   \Big{]} \Big| 
\begin{equation}{\label{eq: end T4}}
|{T_{4,n}}| \le C n^{- \frac{1}{2}}\left(1 + \log (n) \Indi{ \{ \alpha = 2 \} }\right)^\frac{1}{2} \le C n^{- \frac{1}{2}}\left(1 + \log (n) \Indi{ \{ \alpha = 2 \} }\right).    
\end{equation}

{\noindent Relations \eqref{Tqfinalboundfirstregime}, \eqref{eq: end T3} and \eqref{eq: end T4}  complete the proof of Proposition \ref{prop: T negl} in regime one.}

\qed

\subsection{Case $\alpha \in(1,2]$ under condition {$\bf{H_2}(\alpha)$}}
We now show that under the assumption that $\alpha\in(1,2]$ and {$\bm{H}_2(\alpha)$} holds, the convergence rate can be improved to {$n^{-1/2}(1+\Indi{\{\alpha=2\}} \log(n)^{3/2})$} (and the logarithm can be removed if {$\bm{H}_3$} holds true).

\subsubsection{Analysis of  { $T_{1,n}$}} 
{ Observe that if $i,j\in\N$ are such that $j\geq i+m+1$, then $\dot{\mathcal{I}}_{j,n}[i]=\mathcal{I}_{j,n}$. In particular, } $T_{1,n}$ equals
\begin{align*}
& \frac{1}{n} \sum_{i=1}^{\lfloor nt\rfloor}\sum_{j=1}^{\lfloor nt \rfloor} \Indi{\{j \ge i +m + 1\}}\E \Big{[} {f_n\Big( {\bm{X}}_{\frac{i-1}{n}} ,
\mathcal{I}_{i,n}\Big)}   h''_V (\dot{Z}_{t}^{(n)}[i])
\Big(f_n ( {\bm{X}}_{\frac{j-1}{n}} ,
{\dot{\mathcal{I}}_{j,n}[i]}) - f_n ( \dot{\bm{X}}_{\frac{j-1}{n}}[i] ,
\dot{\mathcal{I}}_{j,n}[i])  \Big)  \Big{]}.
\end{align*}
{\rev We recall that, according to \eqref{eq:1new}, 
it is ${\bm{X}}_{\frac{j-1}{n}} - \dot{\bm{X}}_{\frac{j-1}{n}}[i] = {\bm{X}}_{\frac{i + m}{n}} - {\bm{X}}_{\frac{i-1}{n}}  - \bm{\tilde{X}}_{\frac{i + m}{n}} + \bm{\tilde{X}}_{\frac{i - 1}{n}}$ for $j \ge i + m + 1$.}
{Then, by a  Taylor development, 
\begin{multline}\label{eq:TaylorexpansionT1}
f_n ( {\bm{X}}_{\frac{j-1}{n}} ,{\dot{\mathcal{I}}_{j,n}[i]}) - f_n ( \dot{\bm{X}}_{\frac{j-1}{n}}[i] ,\dot{\mathcal{I}}_{j,n}[i])\\
\begin{aligned} 
  &= \sum_{r = 1}^d \frac{\partial f_n}{\partial x_r}  ( \dot{\bm{X}}_{\frac{j-1}{n}}[i] ,\dot{\mathcal{I}}_{j,n}[i]){\hat{\mathcal{J}}}_{i,n}^{(r)}\\
  &+ \int_0^1 \sum_{r_1, r_2 = 1}^d \frac{\partial^2f_n}{\partial x_{r_1} \partial  x_{r_2} }   {\Big(\dot{\bm{X}}_{\frac{j-1}{n}}[i] {+}  \lambda (\mathcal{I}_{i,n} -\tilde{\mathcal{I}}_{i,n})} ,\dot{\mathcal{I}}_{j,n}[i] \Big){\hat{\mathcal{J}}_{i,n}^{(r_1)} \hat{\mathcal{J}}_{i,n}^{(r_2)}} d\lambda,
\end{aligned}
\end{multline}
{where 
\begin{align*}
\hat{\mathcal{J}}_{i,n}^{(r)}
  &:={\mathcal{J}}_{i,n}^{(r)}-\tilde{\mathcal{J}}_{i,n}^{(r)},
\end{align*}
for ${\revch \tilde{\mathcal{J}}_{i,n}^{ (r)}}$ defined analogously to ${\revch \mathcal{J}_{i,n}^{(r)}}$, but with the process $\bm{X}$ replaced by its independent copy $\bm{\tilde{X}}$.
}

In particular, {$T_{1,n}=T_{1,3,n}+T_{1,4,n}$}, where 
\begin{align*}
{T_{1,3,n}}
  &:=\frac{1}{n} \sum_{r=1}^d\sum_{1\leq i,j\leq \lfloor nt \rfloor} \Indi{\{j \ge i +m + 1\}}\\
  &\times \E \Big{[} f_n( {\bm{X}}_{\frac{i-1}{n}} ,
\mathcal{I}_{i,n})   h''_V \left(\dot{Z}_{t}^{(n)}[i]\right)
\frac{\partial f_n}{\partial x_r}  \left({\bm{\dot{X}}_{\frac{j-1}{n}}[i] ,\dot{\mathcal{I}}_{j,n}[i]} \right){\hat{\mathcal{J}}_{i,n}^{(r)}}  \Big{]}
\end{align*} 
and 
\begin{align*}
{T_{1,4,n}}
  &:=\frac{1}{n} \sum_{i=1}^{\lfloor nt\rfloor}\sum_{j=1}^{\lfloor nt \rfloor} \Indi{\{j \ge i +m + 1\}}\E \Big{[} f_n( {\bm{X}}_{\frac{i-1}{n}} ,
\mathcal{I}_{i,n})   h''_V (\dot{Z}_{t}^{(n)}[i])\\
&\times 
\int_0^1 \sum_{r_1, r_2 = 1}^d \frac{\partial^2f_n}{\partial x_{r_1} \partial  x_{r_2} }   {\Big(\dot{\bm{X}}_{\frac{j-1}{n}}[i] {+}   \lambda (\mathcal{I}_{i,n} - \tilde{\mathcal{I}}_{i,n} )},\dot{\mathcal{I}}_{j,n}[i] \Big){\hat{\mathcal{J}}_{i,n}^{(r_1)} \hat{\mathcal{J}}_{i,n}^{(r_2)}} d\lambda  \Big{]}.
\end{align*} 

Note that \( T_{1,3,n} = 0 \). This holds because \(\dot{Z}_{t}^{(n)}[i]\), \(\bm{\dot{X}}_{\frac{j-1}{n}}[i]\), and \(\dot{\mathcal{I}}_{j,n}[i]\) are, by construction, all measurable with respect to \(\mathcal{F}^{(i)}\). Moreover, we have
$$\E \Big[ f_n( \bm{X}_{\frac{i-1}{n}} , \mathcal{I}_{i,n}) \hat{\mathcal{J}}_{i,n}^{(r)} | \mathcal{F}^{(i)} \Big] = \E \Big[ f_n( \bm{X}_{\frac{i-1}{n}} , \mathcal{I}_{i,n}) \mathcal{J}_{i,n}^{(r)} | \mathcal{F}^{(i)} \Big] - \E \Big[ f_n( \bm{X}_{\frac{i-1}{n}} , \mathcal{I}_{i,n}) \tilde{\mathcal{J}}_{i,n}^{(r)} | \mathcal{F}^{(i)} \Big].$$
The first term is zero due to \(\bm{H}_2(\alpha)\). Since \(\bm{\tilde{X}}\) is an independent copy of \(\bm{X}\), the independence of \(f_n( \bm{X}_{\frac{i-1}{n}}, \mathcal{I}_{i,n})\) and \(\tilde{\mathcal{J}}_{i,n}^{(r)}\) implies
$$\E \Big[ f_n( \bm{X}_{\frac{i-1}{n}} , \mathcal{I}_{i,n}) \tilde{\mathcal{J}}_{i,n}^{(r)} | \mathcal{F}^{(i)} \Big] = \E \Big[ f_n( \bm{X}_{\frac{i-1}{n}} , \mathcal{I}_{i,n}) | \mathcal{F}^{(i)} \Big] \E \Big[ \tilde{\mathcal{J}}_{i,n}^{(r)} | \mathcal{F}^{(i)} \Big] = 0,$$
the last equality follows from \eqref{eq: 5.8.5}.
Therefore, it suffices to handle the term $T_{1,4,n}$. Define 
\begin{align}\label{eq:Uijndef}
U_{i,j}^{(n)}
  &:=\int_0^1 \sum_{r_1, r_2 = 1}^d \frac{\partial^2f_n}{\partial x_{r_1} \partial  x_{r_2} }   {\Big(\dot{\bm{X}}_{\frac{j-1}{n}}[i] {+}  \lambda (\mathcal{I}_{i,n}-\tilde{\mathcal{I}}_{i,n})} ,\dot{\mathcal{I}}_{j,n}[i] \Big){\hat{\mathcal{J}}_{i,n}^{(r_1)} \hat{\mathcal{J}}_{i,n}^{(r_2)}} d\lambda,
\end{align}
so we can write 
\begin{align*}
{T_{1,4,n}}
  &=\frac{1}{n} \sum_{i=1}^{\lfloor nt\rfloor}\sum_{j=1}^{\lfloor nt \rfloor} \Indi{\{j \ge i +m + 1\}}\E \Big{[} f_n( {\bm{X}}_{\frac{i-1}{n}} ,
\mathcal{I}_{i,n})   h''_V (\dot{Z}_{t}^{(n)}[i])U_{i,j}^{(n)}  \Big{]}.
\end{align*}
{In analogy to the definition of $\dot{\bm{X}}[i]=(\dot{\bm{X}}_{t}[i]\ ;\ t\geq0)$ in \eqref{eq:Xdotdef} we now introduce the process
$\dot{\bm{X}}[i, j]=(\dot{\bm{X}}_{t}[i,j]\ ;\ t\geq0)$ with $\dot{\bm{X}}_{t}[i,j]=(\dot{{X}}_{t}^{(1)}[i, j],\dots, \dot{{X}}_{t}^{(d)}[i,j])$, where
\begin{align}
\dot{{X}}_{t}^{(\ell)}[i, j]
&:=\int_0^{t}\Indi{\R\backslash([\frac{i-1}{n},\frac{i + m}{n}]\cup [\frac{j-1}{n},\frac{j + m}{n}])}(s)dX_{s}^{(\ell)}\nonumber\\
&{+\int_0^{t}\Indi{[\frac{i-1}{n},\frac{i + m}{n}]\cup [\frac{j-1}{n},\frac{j + m}{n}])}(s)d\tilde{X}_{s}^{(\ell)},}
\end{align}
as well as
\begin{align*}
\dot{Z}_{t}^{(n)}[i,j]
  &:=\frac{1}{\sqrt{n}}\sum_{k=1}^{\lfloor nt\rfloor} f_n\left( \dot{\bm{X}}_{\frac{k-1}{n}}[i,j],
\dot{\mathcal{I}}_{k,n}[i,j]\right).
\end{align*}}
We obtain {$T_{1,n}=T_{1,4,n}=T_{1,5,n}+T_{1,6,n}$}, where 
\begin{align*}
{T_{1,5,n}}
  &=\frac{1}{n} \sum_{i=1}^{\lfloor nt\rfloor}\sum_{j=1}^{\lfloor nt \rfloor} \Indi{\{j \ge i +m + 1\}}\E \Big{[} f_n( {\bm{X}}_{\frac{i-1}{n}} ,
\mathcal{I}_{i,n})   h''_V (\dot{Z}_{t}^{(n)}[i,j])U_{i,j}^{(n)}  \Big{]},\\
{T_{1,6,n}}
  &=\frac{1}{n} \sum_{i=1}^{\lfloor nt\rfloor}\sum_{j=1}^{\lfloor nt \rfloor} \Indi{\{j \ge i +m + 1\}}\E \Big{[} f_n( {\bm{X}}_{\frac{i-1}{n}} ,
\mathcal{I}_{i,n})   \Big(h''_V (\dot{Z}_{t}^{(n)}[i])-h''_V (\dot{Z}_{t}^{(n)}[i,j])\Big)U_{i,j}^{(n)}  \Big{]}.
\end{align*}
From \eqref{eq:TaylorexpansionT1} one can write 
\begin{equation}{\label{eq: Uij 6.20.5}}
   U_{i,j}^{(n)} = f_n ( {\bm{X}}_{\frac{j-1}{n}} ,\mathcal{I}_{j,n}) - f_n ( \dot{\bm{X}}_{\frac{j-1}{n}}[i] ,\dot{\mathcal{I}}_{j,n}[i]) - \sum_{r = 1}^d \frac{\partial f_n}{\partial x_r}  ( \dot{\bm{X}}_{\frac{j-1}{n}}[i] ,\dot{\mathcal{I}}_{j,n}[i]) \hat{\mathcal{J}}_{i,n}^{(r)}. 
\end{equation}
Then, using the fact that $g$ and its derivatives are bounded, {\rev the identity \eqref{eq: Uij 6.20.5} implies  that $|U_{i,j}^{(n)}|\leq C(1+ \left \| \hat{\mathcal{J}}_{i,n} \right \|)$.} In addition, using the triangle inequality and the boundedness of the derivatives of order two of $g$, we can easily deduce {\rev from the definition of $U_{i,j}^{(n)}$ as in \eqref{eq:Uijndef} that $|U_{i,j}^{(n)}|\leq C \left \| \hat{\mathcal{J}}_{i,n} \right \|^2$}, thus yielding the inequality
\begin{align}\label{eq:Uijnbound}
|U_{i,j}^{(n)}|
  &\leq C(1+\left \| \hat{\mathcal{J}}_{i,n} \right \|)\wedge \left \| \hat{\mathcal{J}}_{i,n} \right \|^2.
\end{align}
Since $\alpha>1$, we conclude that the term $U_{i,j}^{(n)}$ is integrable. %, so by the boundedness of $h_V,g$ and the dominated convergence theorem, 
%\begin{align*}
%\E \Big{[} f_n( {\bm{X}}_{\frac{i-1}{n}} ,
%\mathcal{I}_{i,n})   h''_V (\dot{Z}_{t}^{(n)}[i])U_{i,j}^{(n)}  \Big{]}
 % &=\lim_{M\rightarrow\infty}\E \Big{[} \Indi{\{|\mathcal{J}_{i,n}^{(r)}|\leq M\}}f_n( {\bm{X}}_{\frac{i-1}{n}} ,
%\mathcal{I}_{i,n})   h''_V (\dot{Z}_{t}^{(n)}[i])U_{i,j}^{(n)}  \Big{]}.
%\end{align*}
Let us introduce 
\begin{align*}
\mathcal{F}^{(i,j)}
  &:= \sigma \big({\bm{X}}_{\frac{k}{n}} - {\bm{X}}_{\frac{k-1}{n}},{\bm{\tilde{X}}}_{s} - {\bm{\tilde{X}}}_{\frac{h-1}{n}}; \, k \neq i, i+ 1, ... , i + m, j, j+ 1, ... , j + m\\ 
  &\ \ \ \ \ \ \ \ \ \text{ and } h=i, i+ 1, ... , i + m, j, j+ 1, ... , j + m,\text{  } s\in[(h-1)/n,h/n]\big)	
\end{align*}
and observe that the partial derivatives of $f_n$ are conditionally centered given $\mathcal{F}^{(i,j)}$. Indeed, due to the condition $j\geq i+m+1$, we have
$$\E\left[\frac{\partial^2f_n}{\partial x_{r_1} \partial x_{r_2}} \Big(\dot{\bm{X}}_{\frac{j-1}{n}}[i] {+} { \lambda (\mathcal{I}_{i,n}-\tilde{\mathcal{I}}_{i,n})},\dot{\mathcal{I}}_{j,n}[i] \Big)| {\mathcal{F}^{(i,j)}}\right] = 0.$$
As a consequence,  as $\hat{\mathcal{J}}_{i,n}^{(r_1)}$ and $\hat{\mathcal{J}}_{i,n}^{(r_2)}$ are measurable with respect to $\mathcal{F}^{(i,j)}$ for $j\geq i+m+1$, we obtain $\E[U_{i,j}^{(n)}| { \mathcal{F}^{(i,j)}}] = 0$. {In addition,  again by the condition $j\geq i+m+1$, it holds that ${\bm{X}}_{\frac{i-1}{n}},  \mathcal{I}_{i,n}$   and $ \dot{Z}_{t}^{(n)}{[i,j]}$ are $\mathcal{F}^{(i,j)}$-measurable, and consequently}
\begin{align*}
\E \Big{[} f_n( {\bm{X}}_{\frac{i-1}{n}} ,
\mathcal{I}_{i,n})   h''_V (\dot{Z}_{t}^{(n)}{[i,j]})U_{i,j}^{(n)}  \Big{]}
  &{=\E \Big{[} f_n( {\bm{X}}_{\frac{i-1}{n}} ,
\mathcal{I}_{i,n})   h''_V (\dot{Z}_{t}^{(n)}{[i,j]})\E[U_{i,j}^{(n)}\ |\ \mathcal{F}^{(i,j)}]  \Big{]}=0},
\end{align*}
which yields {$T_{1,5,n}=0$}. It thus suffices to handle the term {$T_{1,6,n}$}. To this end, we use the boundedness of $g$ and $h'''_V$, as well as inequality \eqref{eq:Uijnbound}, to deduce that 
\begin{align*}
{|T_{1,6,n}|}
  &\leq  \frac{C}{n} \sum_{i=1}^{\lfloor nt\rfloor}\sum_{j=1}^{\lfloor nt \rfloor} \Indi{\{j \ge i +m + 1\}}\E \Big{[} \Big|(\dot{Z}_{t}^{(n)}[i]- \dot{Z}_{t}^{(n)}[i,j]){\rev \Big(}(1+\left \| \hat{\mathcal{J}}_{i,n} \right \|)\wedge \left \| \hat{\mathcal{J}}_{i,n} \right \|^2 {\rev \Big) }\Big|\Big{]}.
\end{align*}
Since the variables $\dot{Z}_{t}^{(n)}[i],\dot{Z}_{t}^{(n)}[i,j]$ are independent of $\mathcal{J}_{i,n}$, we conclude that 
\begin{align*}
{|T_{1,6,n}|}
  &\leq  \frac{C}{n} \sum_{i=1}^{\lfloor nt\rfloor}\sum_{j=1}^{\lfloor nt \rfloor} \Indi{\{j \ge i +m + 1\}}\E \Big{[} \Big| \dot{Z}_{t}^{(n)}[i]- \dot{Z}_{t}^{(n)}[i,j] \Big| \Big{]}\E\Big{[}(1+\left \| \hat{\mathcal{J}}_{i,n} \right \|)\wedge \left \| \hat{\mathcal{J}}_{i,n} \right \|^2\Big{]}.
\end{align*}
Proceeding as {\rev in the proof of Lemma \ref{lemma: L2 z-zdot} one can show that}
\begin{equation}{\label{eq: bound increment Z2}}
{\rev \E \Big{[} \Big| \dot{Z}_{t}^{(n)}[i]- \dot{Z}_{t}^{(n)}[i,j] \Big|^2 \Big{]}\le C n^{-1}(1+\Indi{\{\alpha=2\}}\log(n)) }
\end{equation}
in general and {\rev without the logarithm}
%\begin{equation}{\label{eq: bound increment Zprime2}}
%\E \Big{[} \Big{|}    {Z}_{t}^{(n)} - \dot{Z}_{t}^{(n)}[i]\Big{|}^2 \Big{]}\le C n^{-1}
%\end{equation}
when $\bm{H}_3$ holds. {\rev From Lemma \ref{eq:ineqoneminx} it follows 
$$\E\Big{[}(1+\left \| \hat{\mathcal{J}}_{i,n} \right \|)\wedge \left \| \hat{\mathcal{J}}_{i,n} \right \|^2\Big{]} \le Cn^{-1}(1 + \Indi{\{\alpha=2\}}\log(n)).$$
The combination of these bounds with Lyapunov inequality gives the final bound 
\begin{align*}
{|T_{1,6,n}|}
  &\leq  Cn^{-1/2} (1+\Indi{\{\alpha=2\}}\log(n))^\frac{1}{2} (1+\Indi{\{\alpha=2\}}\log(n)) \le Cn^{-1/2} (1+\Indi{\{\alpha=2\}}\log(n)^\frac{3}{2})
\end{align*}}
in general and 
\begin{align*}
{|T_{1,6,n}|}
  &\leq  Cn^{-1/2} 
\end{align*}
under the presence of $\bm{H}_{3}$}.

\subsubsection{Analysis of $T_{3,n}$ and $T_{4,n}$} 
It is evident that Equations \eqref{eq: end T3} and \eqref{eq: end T4} remain valid within the scope of our hypotheses and that these equations suffice to derive the (enhanced) convergence rate of $n^{-1/2}(1+\Indi{\{\alpha=2\}} \log(n)^{\rev \frac{3}{2}})$ (and the logarithm can be omitted if $\bm{H}_3$ is satisfied). The proof of Proposition \ref{prop: T negl} is then concluded.
\qed

\subsection{{\rev Proof of Lemma \ref{hanalysis}}} \label{sec6.3}
Recall that  $h_{\gamma}$ is the {\rev unique bounded} solution of the Stein equation
\begin{align*}
x h_{\gamma}^{\prime}(x)-\gamma  h_{\gamma}^{\prime\prime}(x)
  &=\psi(x) - \E[\psi(\sqrt{\gamma}N)],
\end{align*}
for a positive real $\gamma\in\R_{+}$. By \cite[Proposition 4.3.2]{NP12}, 
\begin{align*}
h_{\gamma}(x)
  &=\int_0^{\infty} \Big(\E [\psi(\sqrt{\gamma}N)]-{\E}[\psi(e^{-\theta}x+\sqrt{1-e^{-2\theta}}\sqrt{\gamma}N)] \Big)d\theta.
\end{align*}
From here it follows that 
\begin{align*}
h_{\gamma}^{\prime} (x)
  &=-\int_0^{\infty}e^{-\theta}{\E}\left[ \psi^{\prime}(e^{-\theta}x+\sqrt{1-e^{-2\theta}}\sqrt{\gamma}N)\right]d\theta,
\end{align*}
and consequently, if $\tilde{\gamma}>0$ is another positive real,
\begin{align}\label{eq:hprimedifferenceitoin}
 h_{\gamma}^{\prime}(x)- h_{\tilde{\gamma}}^{\prime}(x)
  &=-\int_0^{\infty}e^{-\theta}{\E}\left[\Psi_{x,\theta}(\sqrt{\gamma}N)-\Psi_{x,\theta}(\sqrt{\tilde{\gamma}}N)\right]d\theta,
\end{align}
where $\Psi_{x,\theta}:\R \rightarrow\R$ is the function
\begin{align*}
\Psi_{x,\theta}(y)
  &:= \psi^{\prime}\left(e^{-\theta}x+\sqrt{1-e^{-2\theta}}y\right).
\end{align*}
One can easily check that $\Psi_{x,\theta}\in C^4(\R;\R)$  and 
$$\|\Psi_{x,\theta}^{(4)}\|_{\infty}\leq \|\psi^{(5)}\|_{\infty}<\infty.$$
Thus, using Equation \eqref{eq:hprimedifferenceitoin} and Lemma \ref{lem:distancegaussians}, as well as the fact that{
\begin{align*}
\frac{\partial^{2}\Psi_{x,\theta}}{\partial y^2}(y)
  &= (1-e^{-2\theta})\psi^{(3)}\left(e^{-\theta}x+\sqrt{1-e^{-2\theta}}y\right)\\
\frac{\partial^{4}\Psi_{x,\theta}}{\partial y^4}(y)
  &= (1-e^{-2\theta})^2\psi^{(5)}\left(e^{-\theta}x+\sqrt{1-e^{-2\theta}}y\right),
\end{align*}
we obtain 
\begin{align*}
\left|h_{\gamma}^{\prime}(x)- h_{\tilde{\gamma}}^{\prime}(x)-(\gamma-\tilde{\gamma})\mathcal{V}[\psi](x)\right|
  &\leq C\int_0^{\infty}e^{-\theta}\|\psi^{(5)}\|_{\infty}|\gamma-\tilde{\gamma}|^{2}d\theta,
\end{align*}
which proves the statement of Lemma \ref{hanalysis}. }

\subsection{Proof of Lemma \ref{lemma: L2 z-zdot}} \label{sec6.4}
{Recall the definition of $\mathfrak{g}_n$ and $\mathfrak{g}$ given in \eqref{eq:gnvariancedef} and \eqref{eq:gvariancedef} and that {\revch $\bm{X}_{s} - \dot{\bm{X}}_{s}[i]$ is as in \eqref{eq:1new}. }
%\begin{equation*}
 %\bm{X}_{s} - \dot{\bm{X}}_{s}[i] = 
 %\begin{cases}
  %  0 \qquad \mbox{ for } s \le \frac{i-1}{n}, \\
   % \bm{X}_{s} - \bm{X}_{\frac{i-1}{n}} \qquad \mbox{ for } \frac{i-1}{n} \le s \le \frac{i + m}{n} \\
   %\bm{X}_{\frac{i + m}{n}} - \bm{X}_{\frac{i-1}{n}} \qquad \mbox{ for } s \ge \frac{i + m}{n}.
 %\end{cases}
%\end{equation*}
Notice that}
\begin{align}\label{eq:VdoV1812}
V-\dot{V}[i]
  &=\int_{\frac{i-1}{n}}^{t}\left(\mathfrak{g}(\bm{X}_{s})-\mathfrak{g}(\dot{\bm{X}}_{s}[i])\right)ds.
\end{align}
{
By Taylor's theorem 
\begin{align*}
|\mathfrak{g}(\bm{X}_{s})-\mathfrak{g}(\dot{\bm{X}}_{s}[i])-\mathfrak{g}^{\prime}(\dot{\bm{X}}_{s}[i])(\bm{X}_{s}-\dot{\bm{X}}_{s}[i])|
  &\leq\frac{1}{2}\|\mathfrak{g}^{\prime\prime}\|_{\infty}\|\bm{X}_{s}-\dot{\bm{X}}_{s}[i]\|^2.
\end{align*}
{\rev Combining the above relation with    \eqref{eq:VdoV1812} and the fact that $\mathfrak{g}^{\prime \prime}$ is bounded, we obtain the statements \eqref{eq:VminusVdotTaylorone} and \eqref{eq:VminusVdotTaylor}.} 

Next, remark that
\begin{align*}
&\E \Big{[} \Big{|}    {Z}_{t}^{(n)} - \dot{Z}_{t}^{(n)}[i]\Big{|}^2 \Big{]} \\
& =  \frac{1}{n} \sum_{j_1, j_2 =1}^{\lfloor nt\rfloor} \E \Big{[} \Big{(}f_n ( {\bm{X}}_{\frac{j_1-1}{n}} ,
\mathcal{I}_{j_1,n}) - f_n ( \dot{\bm{X}}_{\frac{j_1-1}{n}}[i] ,
\dot{\mathcal{I}}_{j_1,n}[i])  \Big{)}\\
&\times \Big{(}f_n ( {\bm{X}}_{\frac{j_2-1}{n}} ,
\mathcal{I}_{j_2,n}) - f_n ( \dot{\bm{X}}_{\frac{j_2-1}{n}}[i] ,
\dot{\mathcal{I}}_{j_2,n}[i])  \Big{)} \Big{]}.
\end{align*}
As before, we examine the different values of $\dot{\bm{X}}_{\frac{j-1}{n}}[i]$ and $\dot{\mathcal{I}}_{j,n}[i]$ as $i$ and $j$ vary.  We split the sum into the instances where $i - m \le j \le i + m$ and $j \ge i+ m+ 1$ to deduce the bound 
\begin{align*}
\E \Big{[} \Big{|}    {Z}_{t}^{(n)} - \dot{Z}_{t}^{(n)}[i]\Big{|}^2 \Big{]} 
  &\leq I_1 + I_2 + I_3,
\end{align*}
where 
\begin{align*}
I_{1}
  &:=\frac{1}{n} \sum_{j_1, j_2= i +m +  1}^{\lfloor nt\rfloor} \Big| \E \Big{[} \Big{(}f_n ( {\bm{X}}_{\frac{j_1-1}{n}} ,
\mathcal{I}_{j_1,n}) - f_n ( \dot{\bm{X}}_{\frac{j_1-1}{n}}[i] ,
\dot{\mathcal{I}}_{j_1,n}[i])  \Big{)} \\
& \times \Big{(}f_n ( {\bm{X}}_{\frac{j_2-1}{n}} ,
\mathcal{I}_{j_2,n}) - f_n ( \dot{\bm{X}}_{\frac{j_2-1}{n}}[i] ,
\dot{\mathcal{I}}_{j_2,n}[i])  \Big{)} \Big{]} \Big|\\
I_{2}
  &:=\frac{2}{n} \sum_{j_1=i +m + 1}^{\lfloor nt\rfloor }\sum_{j_2 = i-m}^{i + m} \Big|\E \Big{[} \Big{(}f_n ( {\bm{X}}_{\frac{j_1-1}{n}} ,
\mathcal{I}_{j_1,n}) - f_n ( \dot{\bm{X}}_{\frac{j_1-1}{n}}[i] ,
\dot{\mathcal{I}}_{j_1,n}[i])  \Big{)}\\
&\times \Big{(}f_n ( {\bm{X}}_{\frac{j_2-1}{n}} ,
\mathcal{I}_{j_2,n}) - f_n ( \dot{\bm{X}}_{\frac{j_2-1}{n}}[i] ,
\dot{\mathcal{I}}_{j_2,n}[i])  \Big{)} \Big{]} \Big|\\
I_{3}
  &:=\frac{1}{n} \sum_{j_1, j_2 = i-m}^{i + m} \Big| \E \Big{[} \Big{(}f_n ( {\bm{X}}_{\frac{j_1-1}{n}} ,
\mathcal{I}_{j_1,n}) - f_n ( \dot{\bm{X}}_{\frac{j_1-1}{n}}[i] ,
\dot{\mathcal{I}}_{j_1,n}[i])  \Big{)} \\
& \times \Big{(}f_n ( {\bm{X}}_{\frac{j_2-1}{n}} ,
\mathcal{I}_{j_2,n}) - f_n ( \dot{\bm{X}}_{\frac{j_2-1}{n}}[i] ,
\dot{\mathcal{I}}_{j_2,n}[i])  \Big{)} \Big{]} \Big|.
\end{align*}
We start studying $I_1$. As the function $f_n$ is centered, if $j_1 + m< j_2$ or $j_2  + m < j_1$ this quantity is equal to zero. Then, we have $j_1 - m \le j_2 \le j_1 + m$. Moreover we recall that, as $j \ge i +m + 1$, we have that $\dot{\mathcal{I}}_{j,n}[i] = {\mathcal{I}}_{j,n}$. Using the mean value theorem and the boundedness of the derivatives with respect to the first component of $f_n$, we thus get
\begin{align*}
|I_1| &\le \frac{C}{n} \sum_{j_1=1}^{\lfloor nt\rfloor} \sum_{j_2 = j_1 - m}^{j_1 + m} \Indi{\{j_1 \ge i+ 1\}} \Big| \E \Big{[} \Big{(}f_n ( {\bm{X}}_{\frac{j_1-1}{n}} ,
\mathcal{I}_{j_1,n}) - f_n ( \dot{\bm{X}}_{\frac{j_1-1}{n}}[i] ,
\dot{\mathcal{I}}_{j_1,n}[i])  \Big{)} \\
& \times \Big{(}f_n ( {\bm{X}}_{\frac{j_2-1}{n}} ,
\mathcal{I}_{j_2,n}) - f_n ( \dot{\bm{X}}_{\frac{j_2-1}{n}}[i] ,
\dot{\mathcal{I}}_{j_2,n}[i])  \Big{)} \Big{]} \Big| \\
& \le \frac{C}{n} \sum_{j=1}^{\lfloor nt\rfloor} \E\Big[(1 \land \|{\bm{X}}_{\frac{i + m}{n}} - {\bm{X}}_{\frac{i-1}{n}}\| )^2\Big]
\le C\E\left[(1 \land \|{\bm{X}}_{\frac{m +1}{n}}\| )^2\right].
\end{align*}
The term on the right {\rev is bounded by $Cn^{-1}(1 + \Indi{_{\{\alpha = 2\}}} \log(n))$ thanks to Equation \eqref{eq:exponewedgex1 lemma} and the logarithm can be removed when {$\bm{H}_{3}$} is satisfied because of \eqref{eq:exponewedgex2} }.\\

\noindent We now deal with $I_2$. We act differently for $j_1 \ge i+2m + 1$ and $i + m + 1 \le j_1 \le i + 2m$ {\rev and we therefore introduce $I_{2,1}$ and $I_{2,2}$ which are such that $I_2 = I_{2,1} + I_{2,2}$}. In the first case, as $j_2 + m \le i + 2m < j_1$, it is easy to check that {\rev $I_{2,1}=0$}. For $i+ m + 1 \le j_1 \le i + 2m$ the boundedness of $f_n$ is enough to obtain that {\rev $I_{2,2}$} is upper bounded by $ Cmn^{-1} \| f_n  \|^2_\infty$. Similarly, the boundedness of $f_n$ ensures
$$|I_3|   \le C n^{-1}.$$
Putting things together we obtain \eqref{eq: bound increment Z}
in general and {\rev without the logarithm}
when {$\bm{H}_{3}$} holds. The proof of the lemma is therefore concluded.}
\qed

\section*{Proof of Theorem \ref{th2}} \label{s: proof conv stable}

{Let $t_{1},\dots, t_{r}\geq0$ be a given collection of fixed times such that $t_{1}<\dots<t_r$. %\setcounter{equation}{0}
The convergence in law of $(Z_{t_1}^{(n)},\dots,Z_{t_r}^{(n)})$ towards $(\int_{0}^{t_{1}}\sqrt{A_{s}}W(ds),\dots, \int_{0}^{t_{r}}\sqrt{A_{s}}W(ds))$ is equivalent the convergence of $\mathcal{Z}_{n}:=(Z_{t_1}^{(n)},Z_{t_2}^{(n)}-Z_{t_1}^{(n)},\dots,Z_{t_r}^{(n)}-Z_{t_{r-1}}^{(n)})$ towards $(\int_{0}^{t_{1}}\sqrt{A_{s}}W(ds),\int_{t_1}^{t_{2}}\sqrt{A_{s}}W(ds), \dots, \int_{t_{r-1}}^{t_{r}}\sqrt{A_{s}}W(ds))$. The analysis of this former convergence relies} heavily on the controls in the proof of Theorem \ref{th1}. Due to Theorem \ref{stein}, the problem is reduced to proving that condition \eqref{condi} is satisfied, i.e. that the term
$$\E\left[F\,  \sum_{q=1}^r(Z_{t_q}^{(n)}-Z_{t_{q-1}}^{(n)})\frac{\partial h_{V_q}}{\partial x_{q}}(\mathcal{Z}_n)
  - F \sum_{q=1}^{r}V_q\frac{\partial^2 h_{V_q}}{\partial x_{q}^2}(\mathcal{Z}_n)\right] \rightarrow 0,$$
with 
\begin{align*}
V_q
  &:=\int_{t_{q-1}}^{t_{q}}A_{s}^2ds,
\end{align*} 	 	
converges to zero for all bounded {\rev $\mathcal{G}$-measurable} random variables $F$ and all functions {$h_{V_q} \in C^2(\mathbb{R}^r; \R)$} with bounded first and second derivatives. {As in the proof of Theorem \ref{th1}, the fundamental ingredient of the proof consists of a detailed understanding of the term 
\begin{align*}
\E\left[F\,  \sum_{q=1}^r(Z_{t_q}^{(n)}-Z_{t_{q-1}}^{(n)})\frac{\partial h_{V_q}}{\partial x_{q}}(\mathcal{Z}_n)\right],
\end{align*}
which in turn is reduced to the study of each of the individual terms $\E\left[F\, (Z_{t_q}^{(n)}-Z_{t_{q-1}}^{(n)}) \frac{\partial h_{V_q}}{\partial x_{q}}(\mathcal{Z}_n)\right]$. 
}
We observe that{
\begin{align}\label{uniformboundednessocnd}
\sup_{n\geq 1}\left \|Z_{t}^{(n)} \right \|_{L^2} \leq  \sup_{n\geq 1}\frac{1}{{\rev n}}\sum_{i=1}^{\lfloor nt\rfloor } \E[f_n^2(\bm{X}_{\frac{i-1}{n}} ,\mathcal{I}_{i,n})]^{1/2}<\infty,
\end{align}
due to the boundedness of $g$. Due to  \eqref{uniformboundednessocnd},} we can use an approximation argument to reduce the problem of showing \eqref{condi} to the case where $F$ is of the form
\begin{equation}{\label{eq: approx F}}
F := \exp{\{ \textbf{i} \textbf{v}_1 \bm{X}_{\epsilon_1}+\cdots+\textbf{i} \textbf{v}_p \bm{X}_{\epsilon_p}\}}  
\end{equation}
for some $\textbf{v}_1,\dots,\textbf{v}_{p}\in\R^d$, $\epsilon_1<\cdots< \epsilon_{p}$ and $p\in\N$. In a similar way as in the proof of Theorem \ref{th1}, let us introduce 
$$\dot{F} [i] := \exp{\{ \textbf{i} \textbf{v}_1 \dot{\bm{X}}_{\epsilon_1}[i]+\cdots+\textbf{i} \textbf{v}_p \dot{\bm{X}}_{\epsilon_p}[i]\}}.$$
We now observe {\rev that, for all $q=1,\dots, r$, }
\begin{align*}
\E\left[F\, {(Z_{t_q}^{(n)}-Z_{t_{q-1}}^{(n)})} {\frac{\partial h_{V_q}}{\partial x_{q}}}({\mathcal{Z}_n}) \right]
  & = \frac{1}{\sqrt{n}} \sum_{i={\lfloor nt_{q-1}\rfloor+1}}^{{\lfloor nt_q\rfloor }} \E \left[f_n(\bm{X}_{\frac{i-1}{n}} ,\mathcal{I}_{i,n}) F {\frac{\partial h_{V_q}}{\partial x_{q}}}({\mathcal{Z}_n})\right]. 
\end{align*}
{Consider perturbed vector
\begin{align*}
\dot{\mathcal{Z}}_{n}[i]
  &:=(Z_{t_1}^{(n)}[i],Z_{t_2}^{(n)}[i]-Z_{t_1}^{(n)}[i],\dots,Z_{t_r}^{(n)}[i]-Z_{t_{r-1}}^{(n)}[i]).
\end{align*}
Then, we can write 
\begin{multline*}
\E\left[F\, {(Z_{t_q}^{(n)}-Z_{t_{q-1}}^{(n)})} {\frac{\partial h_{V_q}}{\partial x_{q}}}({\mathcal{Z}_n}) \right]\\
\begin{aligned}
& = \frac{1}{\sqrt{n}} \sum_{i={\lfloor nt_{q-1}\rfloor+1}}^{{\lfloor nt_q\rfloor }} \E \left[f_n(\bm{X}_{\frac{i-1}{n}} ,\mathcal{I}_{i,n}) (F - \dot{F}[i]) {\frac{\partial h_{V_q}}{\partial x_{q}}}({\mathcal{Z}_n})\right]  \\
& + \frac{1}{\sqrt{n}} \sum_{i={\lfloor nt_{q-1}\rfloor+1}}^{{\lfloor nt_q\rfloor }} \E \left[f_n(\bm{X}_{\frac{i-1}{n}} ,\mathcal{I}_{i,n}) \dot{F}[i] ({\frac{\partial h_{V_q}}{\partial x_{q}}}({\mathcal{Z}_n})- {\frac{\partial h_{V_q}}{\partial x_{q}}}({\dot{\mathcal{Z}}_n[i]}) \right] \\
& + \frac{1}{\sqrt{n}} \sum_{i={\lfloor nt_{q-1}\rfloor+1}}^{{\lfloor nt_q\rfloor }} \E \left[f_n(\bm{X}_{\frac{i-1}{n}} ,\mathcal{I}_{i,n}) \dot{F}[i] {\frac{\partial h_{V_q}}{\partial x_{q}}}({\dot{\mathcal{Z}}_n[i]})\right]\\
& = : I_1 + I_2 + I_3.
\end{aligned}
\end{multline*}
}

We start dealing with $I_3$. We observe that $\dot{F}[i] {\frac{\partial h_{V_q}}{\partial x_{q}}}({\dot{\mathcal{Z}}_n[i]})$ is measurable with respect to $\mathcal{F}^{(i)}$. Then, 
\begin{align*}
I_3 = \frac{1}{\sqrt{n}} \sum_{i={\lfloor nt_{q-1}\rfloor+1}}^{{\lfloor nt_q\rfloor }} \E \left[ \E[f_n(\bm{X}_{\frac{i-1}{n}} ,\mathcal{I}_{i,n})| \mathcal{F}^{(i)}] \dot{F}[i] {\frac{\partial h_{V_q}}{\partial x_{q}}}({\dot{\mathcal{Z}}_n[i]})\right] = 0.
\end{align*}
Regarding $I_1$, from the boundedness of $f_n$ and ${\nabla h_{V_q}}$ we conclude that
$$|I_1| \le \frac{C}{\sqrt{n}} \sum_{i=1}^{\lfloor nt\rfloor } \E \left[|F - \dot{F}[i]|\right],$$
{where $t$ is defined by $t:=t_1\vee\cdots\vee t_{r}$.}  Thus, from the definition of $F$ and $\dot{F}[i]$ it is straightforward to see that, {\rev thanks to the fact that $\bm{X}_{s} - \dot{\bm{X}}_{s}[i]$ is as in \eqref{eq:1new},}
\begin{align}{\label{eq: bound F F(i)}}
\E \left[|F - \dot{F}[i]|\right] %\le C \E \left[1 \land \|\bm{X}_{s} - \dot{\bm{X}}_{s}[i]\|\right] \\
& \le C n^{- \frac{1}{\alpha}} + Cn^{-1}\left(1 + \Indi{ \{ \alpha = 1 \}} \log n\right), 
\end{align}
where we have also used Lemma \ref{eq:ineqoneminx}. It follows that
$$|I_1| \le C n^{\frac{1}{2}- \frac{1}{\alpha}} + Cn^{-\frac 12}\left(1 + \Indi{ \{ \alpha = 1 \}} \log n\right),$$
which clearly converges to $0$ for $n \rightarrow \infty$. \\

\noindent To conclude the proof we deal with $I_2$. {\revch We underline that it is the same as the main term appearing in \eqref{eq:SnIBPintro} in the previous proof, with an extra $\dot{F}[i]$, before the application of Taylor approximation. It is easy to check that, as $\dot{F}[i]$ is independent from $\mathcal{F}^{(i)}$, the proof of Theorem \ref{th1} starting from Taylor development as in \eqref{eq: Taylor} still applies.} { \rev Define 
\begin{align*}
I_{2,0}
  &:=\frac{1}{\sqrt{n}} \sum_{p=1}^r\Indi{\{p\neq q\}}\sum_{i=\lfloor nt_{q-1}+1\rfloor}^{\lfloor nt_q\rfloor } \E \left[f_n(\bm{X}_{\frac{i-1}{n}} ,\mathcal{I}_{i,n}) \dot{F}[i] \frac{\partial^2 h_{V_{q}}}{\partial x_{q}x_{p}}(\mathcal{Z}_n)(Z_{t_p}^{(n)} - \dot{Z}_{t_p}^{(n)}[i]) \right]\\
I_{2,1}
  &:=\frac{1}{\sqrt{n}} \sum_{i=\lfloor nt_{q-1}+1\rfloor}^{\lfloor nt_q\rfloor } \E \left[f_n(\bm{X}_{\frac{i-1}{n}} ,\mathcal{I}_{i,n}) \dot{F}[i] \frac{\partial^2 h_{V_{q}}}{\partial x_{q}^2}({\mathcal{Z}_n})(Z_{t_q}^{(n)} - \dot{Z}_{t_q}^{(n)}[i]) \right],
\end{align*}
and $I_{2,2}:=I_2 -(I_{2,0}+I_{2,1})$. {\revch Observe that $I_{2,2}$ plays a similar role to $T_{3,n}$ in the proof of Theorem \ref{th1}. However, on $I_{2,0} + I_{2,1}$, we have not yet applied the decomposition derived from \eqref{eq: Sn 1.25} and \eqref{eq: 5.13.5}. Once applied, it will become evident that both $I_{2,0}$ and $I_{2,1}$ can be viewed as $T_{1,n} + T_{2,n} + T_{4,n}$, where $h''_V(Z_t^{(n)})$ is replaced by $\frac{\partial^2 h_{V_{q}}}{\partial x_{q}x_{p}}(\mathcal{Z}n)$ in the first case and $\frac{\partial^2 h_{V_{q}}}{\partial x_{q}^2}({\mathcal{Z}_n})$ in the second. Then, as} in the proof of Theorem  \ref{th1}, we have that $I_{2,2}$ goes to $0$ due to the $L^2$ bound on the increments $(Z_{t}^{(n)} - \dot{Z}_{t}^{(n)}[i])$ gathered in Lemma \ref{lemma: L2 z-zdot}, while $I_{2,0}+I_{2,1}$ is the main contribution. {\revch Applying the same decomposition and the same approach as in the proof of Theorem \ref{th1}, and using that} $\beta_n$ defined as in \eqref{eq: def beta} converges to $0$ because of \eqref{eq: var to zero}, one obtains that $|I_{2,1} - \E[\dot{F}[i] V_q \frac{\partial^2 h_{V_q}}{\partial x_q^2}(\mathcal{Z}_n) ]| \rightarrow 0$ and $I_{2,0} \rightarrow 0$ for $n \rightarrow \infty$. \\
The proof is complete once one notices that, according with the arguing above, 
$$I_2 - \E[F V_q \frac{\partial^2 h_{V_q}}{\partial x_q^2}(\mathcal{Z}_{n}) ] = \E[( F -\dot{F}[i]) V_q \frac{\partial^2 h_{V_q}}{\partial x_q^2}(\mathcal{Z}_{n}) ] + o(1)$$
and that the first term in the right hand side above converges towards $0$ for $n \rightarrow \infty$ thanks to \eqref{eq: bound F F(i)}. }
\qed

\section{Technical lemmas} \label{s: technical lemmas}
\begin{lemma}\label{lem:distancegaussians}
Let $X\sim \mathcal N(0,\sigma^2)$ and $Y\sim \mathcal N(0,s^2)$. Then we have that for every function $\psi\in C^4(\R;\R)$  with $\|\psi^{(4)}\|_{\infty}<\infty$, it holds that 
\begin{align*}
|\E[\psi(X)-\psi(Y)-(\sigma^2-s^2)\psi^{\prime\prime}(Y)]|
  &\leq 4\|\psi^{(4)}\|_{\infty}|s^{2}-\sigma^2|^{2}.
\end{align*}
\end{lemma}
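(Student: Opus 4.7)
The natural route is Gaussian interpolation combined with Stein's integration by parts. The plan is to reduce everything to a single-variable analysis of the function
\[
f(t) := \E[\psi(\sqrt{t}Z)], \qquad t\geq 0,
\]
where $Z\sim \mathcal{N}(0,1)$, so that $f(\sigma^2)=\E[\psi(X)]$ and $f(s^2)=\E[\psi(Y)]$. Writing the difference $\E[\psi(X)]-\E[\psi(Y)]$ as $f(\sigma^2)-f(s^2)$ reduces the proof to a Taylor expansion of $f$ around $t=s^2$, with the $\psi''(Y)$ term arising as the first-order derivative $f'(s^2)$ and the $\|\psi^{(4)}\|_\infty (s^2-\sigma^2)^2$ control arising from a second-order Lagrange/integral remainder involving $f''$.

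First I would differentiate $f$ under the expectation: $f'(t) = \frac{1}{2\sqrt{t}}\E[Z\psi'(\sqrt{t}Z)]$. Then I would apply the Gaussian integration by parts formula $\E[Zg(Z)]=\E[g'(Z)]$ with $g(z)=\psi'(\sqrt{t}z)$ to obtain the clean identity
\[
f'(t)=\tfrac{1}{2}\E[\psi''(\sqrt{t}Z)].
\]
Iterating the same Stein trick on $f'$ (differentiating in $t$, then applying Gaussian integration by parts twice) yields
\[
f''(t)=\tfrac{1}{4}\E\bigl[\psi^{(4)}(\sqrt{t}Z)\bigr],
\]
which is the only place where the assumption $\|\psi^{(4)}\|_\infty<\infty$ enters. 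In particular $|f''(t)|\leq \tfrac{1}{4}\|\psi^{(4)}\|_\infty$ uniformly in $t$.

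Next I would write the Taylor expansion of $f$ at $s^2$ with integral remainder,
\[
f(\sigma^2)=f(s^2)+(\sigma^2-s^2)f'(s^2)+\int_{s^2}^{\sigma^2}(\sigma^2-u)f''(u)\,du,
\]
translate $f'(s^2)=\tfrac{1}{2}\E[\psi''(Y)]$, and bound the remainder in absolute value by $\tfrac{1}{4}\|\psi^{(4)}\|_\infty \cdot \tfrac{1}{2}(\sigma^2-s^2)^2$. This yields the stated inequality (with the constant $4$ being a comfortable upper bound for $\tfrac{1}{8}$).

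The only subtle point is justifying the differentiation under the integral sign and the Gaussian integration by parts in the computation of $f'$ and $f''$, which requires the derivatives of $\psi$ up to order four to grow at most polynomially so that dominated convergence applies; this is automatic from $\|\psi^{(4)}\|_{\infty}<\infty$ and a standard interpolation argument between $\psi^{(4)}$ and the lower-order derivatives against the Gaussian density. Once these regularity issues are settled, the argument is essentially a two-line Taylor estimate, so I do not anticipate a substantive obstacle.
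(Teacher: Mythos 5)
Your interpolation route is sound and, in fact, cleaner than the paper's argument in one respect: the paper proves the bound by realizing the coupling $X\stackrel{\mathrm{Law}}{=}Y+\varepsilon Z$ with $\varepsilon=\sqrt{\sigma^2-s^2}$ and Taylor-expanding pointwise, which forces a case split between $\sigma^2>s^2$ and $\sigma^2<s^2$ (the coupling only goes one way) and an extra triangle-inequality step to move $\psi''$ from $X$ to $Y$; your function $f(t)=\E[\psi(\sqrt{t}Z)]$ with $f'(t)=\tfrac12\E[\psi''(\sqrt{t}Z)]$ and $f''(t)=\tfrac14\E[\psi^{(4)}(\sqrt{t}Z)]$ handles both signs at once and gives the sharper constant $\tfrac18$.

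However, there is a genuine mismatch you gloss over at the final step. Your Taylor expansion produces the first-order term $(\sigma^2-s^2)f'(s^2)=\tfrac12(\sigma^2-s^2)\E[\psi''(Y)]$, whereas the lemma as stated has $(\sigma^2-s^2)\E[\psi''(Y)]$ without the $\tfrac12$. These differ by $\tfrac12(\sigma^2-s^2)\E[\psi''(Y)]$, a quantity of \emph{first} order in $|\sigma^2-s^2|$, so it cannot be absorbed into the $O(|\sigma^2-s^2|^2)$ remainder no matter how generous the constant; your remark that ``the constant $4$ is a comfortable upper bound for $\tfrac18$'' does not bridge this gap. What you have actually proved is the inequality with $\tfrac12(\sigma^2-s^2)\psi''(Y)$ in place of $(\sigma^2-s^2)\psi''(Y)$ — and that is the \emph{correct} statement: the printed lemma is false as written ($\psi(x)=x^2$ has $\psi^{(4)}\equiv 0$ but left-hand side $|\sigma^2-s^2|$), and the paper's own proof commits the same slip by writing the expansion of $\psi(Y+\varepsilon Z)$ with the terms $\psi''(Y)\varepsilon^2Z^2$ and $\psi'''(Y)\varepsilon^3Z^3$ in place of $\tfrac{1}{2!}\psi''(Y)\varepsilon^2Z^2$ and $\tfrac{1}{3!}\psi'''(Y)\varepsilon^3Z^3$. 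So your computation should be kept, but you must flag that it establishes the corrected statement (the factor $\tfrac12$ then propagates into the definition of $\mathcal{V}[\psi]$ in Lemma \ref{hanalysis}, harmlessly for the rates), rather than asserting that it yields the inequality exactly as printed.
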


\begin{proof}
Suppose first that $\sigma^2>s^2$. {\rev Remark that in the left hand side above we can replace $X$ and $Y$ with any coupling of $X$ and $Y$. In particular, we can choose $X$ and $Y$ such that}, $X\stackrel{Law}{=}Y+\varepsilon Z$, where $\varepsilon:=\sqrt{\sigma^2-s^2}$ and $Z$ is a standard Gaussian random variable independent of $Y$. By the independence of $Y$ and $Z$, $\E[\psi^{\prime}(Y)Z]=\E[\psi^{\prime}(Y)Z^3]=0$ and $\E[\psi^{\prime}(Y)Z^2]=\E[\psi^{\prime}(Y)]$. Consequently, 
\begin{multline*}
|\E[\psi(X)-\psi(Y)-(\sigma^{2}-s^2)\psi^{\prime\prime}(Y)]|\\
\begin{aligned}
  &=|\E[\psi(Y+\varepsilon Z)-\psi(Y)-\psi^{\prime}(Y)(\varepsilon Z)-\psi^{\prime\prime}(Y)(\varepsilon^2Z^2)-\psi^{\prime\prime\prime}(Y)(\varepsilon^3Z^3)]]|\\
	&\leq \|\psi^{(4)}\|_{\infty}\E[|\varepsilon Z|^4]|=3\|\psi^{(4)}\|_{\infty}\varepsilon^4.
\end{aligned}
\end{multline*}
In the case $\sigma^2<s^2$, we have that $Y\stackrel{Law}{=}X+\delta Z$, where $\delta:=\sqrt{s^2-\sigma^2}$. Proceeding as before, we get
\begin{align*}
|\E[\psi(Y)-\psi(X)-(s^{2}-\sigma^2)\psi^{\prime\prime}(X)]| 
	&\leq  3\|\psi^{(4)}\|_{\infty}\delta^4.
\end{align*}
Combining this inequality with the relation
\begin{align*}
|\E[(s^{2}-\sigma^2)\psi^{\prime\prime}(X)]-\E[(s^{2}-\sigma^2)\psi^{\prime\prime}(Y)]|
  &\leq \delta^4 \|\psi^{(4)}\|_{\infty},
\end{align*} 
we get 
\begin{align*}
|\E[\psi(Y)-\psi(X)-(s^{2}-\sigma^2)\psi^{\prime\prime}(Y)]| 
	&\leq  4\|\psi^{(4)}\|_{\infty}\delta^4.
\end{align*}
The result easily follows from here.
\end{proof}

\begin{lemma}\label{eq:ineqoneminx}
Let $\bm{X}$ be a process satisfying \eqref{eq:Xtails} {for some $\alpha\in(0,2]$}. Then 
\begin{align}\label{eq:exponewedgex1 lemma}
\E\left[1\wedge\|\bm{X}_{1/n}\|^2\right]
  &\leq Cn^{-1}\left(1+\Indi{\{\alpha=2\}}\log(n)\right).
\end{align}
and 
\begin{align}\label{eq:exponewedgexv2}
\E\left[1\wedge\|\bm{X}_{1/n}\|\right]
  &\leq C \left(n^{-\frac{1}{\alpha}}+n^{-1}\left(1+\Indi{\{\alpha=1\}}\log(n)\right)\right).
\end{align}
{\rev In the case $\alpha>1$, the above inequalities generalize to 
\begin{align}\label{eq:exponewedgex1wemma}
\E\left[\|\bm{X}_{1/n}\|\wedge\|\bm{X}_{1/n}\|^2\right]
  &\leq Cn^{-1}\left(1+\Indi{\{\alpha=2\}}\log(n)\right).
\end{align}
If $\alpha=2$ and $\textbf{H}_3$ holds true for all $t\leq 1$, then 
\begin{align}\label{eq:exponewedgex2}
\E\left[\|\bm{X}_{1/n}\|\wedge\|\bm{X}_{1/n}\|^2\right]
  &\leq Cn^{-1}.
\end{align}}

\end{lemma}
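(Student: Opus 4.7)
The plan is to exploit the layer-cake formula together with the tail estimate provided by $\bm{H}_{1}(\alpha)$. As a preliminary step I would transfer the coordinate-wise bound \eqref{eq:Xtails} to $\|\bm{X}_{1/n}\|$: using $\|\bm{X}_{1/n}\|\leq \sqrt{d}\max_{\ell}|X_{1/n}^{(\ell)}|$, a union bound and the i.i.d.\ components yield $\Pb[\|\bm{X}_{1/n}\|>s]\leq \min(1,\,Cn^{-1}s^{-\alpha})$ for every $s>0$. The natural threshold in what follows is $s_0\asymp n^{-1/\alpha}$, the value at which the two competing bounds agree.

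For \eqref{eq:exponewedgex1 lemma} I would use the identity $\E[1\wedge Y^2]=2\int_0^1 s\,\Pb[Y>s]\,ds$, obtained from $\E[W]=\int_0^\infty \Pb[W>u]\,du$ via the change of variable $u=s^2$, and split the integral at $s_0$. The contribution from $[0,s_0]$ is bounded by $s_0^2\asymp n^{-2/\alpha}\leq n^{-1}$ (since $\alpha\leq 2$), while the contribution from $[s_0,1]$ equals $2Cn^{-1}\int_{s_0}^1 s^{1-\alpha}ds$. The primitive is proportional to $s^{2-\alpha}$ for $\alpha<2$ and produces $O(n^{-1})$, whereas at the critical point $\alpha=2$ it is logarithmic and produces $Cn^{-1}\log n$. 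Inequality \eqref{eq:exponewedgexv2} follows analogously from $\E[1\wedge Y]=\int_0^1 \Pb[Y>s]\,ds$: the contribution from $[0,s_0]$ is now $s_0\asymp n^{-1/\alpha}$, and $Cn^{-1}\int_{s_0}^1 s^{-\alpha}ds$ evaluates to $O(n^{-1})$ for $\alpha<1$, to $Cn^{-1}\log n$ at the critical value $\alpha=1$, and to $O(n^{-1/\alpha})$ for $\alpha>1$ (absorbed into the first piece).

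For \eqref{eq:exponewedgex1wemma} I would decompose $Y\wedge Y^2=Y^2\Indi{\{Y<1\}}+Y\Indi{\{Y\geq 1\}}$. The first summand is controlled exactly by the argument just used for \eqref{eq:exponewedgex1 lemma}. For the second I would write $\E[Y\Indi{\{Y\geq 1\}}]=\Pb[Y\geq 1]+\int_1^\infty \Pb[Y>u]\,du$; since $\alpha>1$ the tail integral $\int_1^\infty u^{-\alpha}du$ converges and both terms are $O(n^{-1})$. Finally, for \eqref{eq:exponewedgex2} the stronger tail from $\bm{H}_3$ gives $\Pb[\|\bm{X}_{1/n}\|>s]\leq Cn^{-\gamma}s^{-2\gamma}$ with $\gamma>1$; repeating the layer-cake argument with the new threshold $s_0\asymp n^{-1/2}$, the boundary term equals $s_0^2\asymp n^{-1}$, and the integral $n^{-\gamma}\int_{s_0}^1 s^{1-2\gamma}ds$ evaluates to a constant times $n^{-\gamma}\cdot s_0^{2-2\gamma}=n^{-\gamma}\cdot n^{\gamma-1}=n^{-1}$; no logarithm is generated because $1-2\gamma\neq-1$. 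No step is technically deep, but it is essential to track the three regimes around the critical exponents $\alpha=1$ and $\alpha=2$ carefully to obtain the correct logarithmic corrections.
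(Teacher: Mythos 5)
Your proposal is correct and follows essentially the same route as the paper: both rest on the layer-cake representation of $\E[1\wedge Y]$, $\E[1\wedge Y^2]$ and $\E[Y\wedge Y^2]$ combined with the tail bound $\Pb[\|\bm{X}_{1/n}\|>s]\leq 1\wedge(C n^{-1}s^{-\alpha})$, with the integral split at the crossover threshold $s_0\asymp n^{-1/\alpha}$ (the paper writes the split as $1\wedge(\kappa n^{-1}s^{-\alpha})$ inside the integrand, which is the same computation), and the logarithms arising exactly at the critical exponents $\alpha=1$ and $\alpha=2$. Your explicit union-bound step passing from the componentwise tail \eqref{eq:Xtails} to the norm is a welcome clarification that the paper leaves implicit, and the only detail left unsaid in your $\bm{H}_3$ case is the one-line bound $\E[\|\bm{X}_{1/n}\|\Indi{\{\|\bm{X}_{1/n}\|\geq 1\}}]\leq Cn^{-\gamma}\leq Cn^{-1}$, which follows exactly as in your $\alpha>1$ argument.
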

\begin{proof}
{We first consider the case $\alpha\in(0,2)$. Observe that 
\begin{align}\label{eq:EonewedgeXbound}
\E[1\wedge\|\bm{X}_{1/n}\|^2]
  &=\Pb[\|\bm{X}_{1/n}\|\geq 1]+\E[\Indi{\{\|\bm{X}_{1/n}\|\leq 1\}}\|\bm{X}_{1/n}\|^2]
\end{align}
so that 
\begin{align*}
\E[1\wedge\|\bm{X}_{1/n}\|^2]
  &\leq \kappa n^{-1}+\E[\Indi{\{\|\bm{X}_{1/n}\|\leq 1\}}\|\bm{X}_{1/n}\|^2]\\
	&= \kappa n^{-1}+2\E\left[\int_{\R_{+}}s\Indi{\{s\leq \|\bm{X}_{1/n}\|\leq 1\}}ds\right]
	= \kappa n^{-1}+2\int_{0}^1s\Pb[s\leq \|\bm{X}_{1/n}\|]ds\\
	&\leq \kappa n^{-1}+2\int_{0}^1s(1\wedge(\kappa n^{-1}s^{-\alpha}))ds.
\end{align*}
}We get
\begin{align}\label{ineq:integralappendixbound}
\int_{0}^1s(1\wedge(\kappa n^{-1}s^{-\alpha}))ds
	&\leq \kappa n^{-1}\int_{\kappa^{\frac{1}{\alpha}} n^{-\frac{1}{\alpha}}}^1s^{1-\alpha}ds
	+\int_{0}^{\kappa^{\frac{1}{\alpha}} n^{-\frac{1}{\alpha}}}sds\nonumber\\
	&
	{\rev \leq C\left( n^{-1} +  n^{- \frac{2}{\alpha}}\right) \le C n^{-1},} 
\end{align}
yielding the desired result. For handling the case $\alpha=2$, the above analysis gives
\begin{align}{\label{eq: 6.75 bound int}}
\int_{0}^1s(1\wedge(\kappa n^{-1}s^{-2}))ds
	&\leq n^{-1} \log(\kappa^{-\frac{1}{2}} n^{\frac{1}{2}})
	+\frac{1}{2} \kappa n^{-1}, 
\end{align}
giving the desired bound. The proof of \eqref{eq:exponewedgexv2} follows by a similar reasoning. Indeed, 
\begin{align*}
\E[1\wedge\|\bm{X}_{1/n}\|]
  &=\Pb[\|\bm{X}_{1/n}\|\geq 1]+\E[\Indi{\{\|\bm{X}_{1/n}\|\leq 1\}}\|\bm{X}_{1/n}\|] \\
	& \le \kappa n^{-1}+\E\left[\int_{\R_{+}} \Indi{\{s\leq \|\bm{X}_{1/n}\|\leq 1\}}ds\right] \leq \kappa n^{-1}+\int_{0}^1(1\wedge(\kappa n^{-1}s^{-\alpha}))ds.
\end{align*}
As before we obtain for $\alpha \neq 1$:
$$\int_{0}^1\left(1\wedge(\kappa n^{-1}s^{-\alpha})\right)ds  \le \kappa n^{-1} + \kappa^{\frac{1}{\alpha}} n^{-\frac{1}{\alpha}},$$
while for $\alpha = 1$ we have 
$$\int_{0}^1\left(1\wedge(\kappa n^{-1}s^{-1})\right)ds  \le \kappa n^{-1} \log(n) + \kappa^{\frac{1}{\alpha}} n^{-1}.$$
Hence, \eqref{eq:exponewedgexv2} holds true.

{\noindent For proving inequality \eqref{eq:exponewedgex1wemma}, we observe that 
\begin{align*}
\E[\|\bm{X}_{1/n}\|\wedge\|\bm{X}_{1/n}\|^2]
  &=\E[\Indi{\{\|\bm{X}_{1/n}\|\geq 1\}}\|\bm{X}_{1/n}\|]+\E[\Indi{\{\|\bm{X}_{1/n}\|\leq 1\}}\|\bm{X}_{1/n}\|^2]
\end{align*}
so that 
{
\begin{align*}
\E[\|\bm{X}_{1/n}\|\wedge\|\bm{X}_{1/n}\|^2]
	&= \E\left[\int_{\R_{+}}\Indi{\{1\vee s\leq \|\bm{X}_{1/n}\|\}}ds\right]+2\E\left[\int_{\R_{+}}s\Indi{\{s\leq \|\bm{X}_{1/n}\|\leq 1\}}ds\right]\\
	&\leq \int_{0}^{\infty} \kappa n^{-1}(1\vee s)^{-\alpha} ds+2\int_{0}^1s(1\wedge(\kappa n^{-1}s^{-\alpha}))ds.
\end{align*}}
The first integral is bounded by $C/n$ due to the condition $\alpha>1$, while the second one can be bounded by means of inequality \eqref{ineq:integralappendixbound}, {\rev while the case $\alpha = 2$ is studied in \eqref{eq: 6.75 bound int}}. Inequality \eqref{eq:exponewedgex1wemma} follows from here.
}

\noindent For the case where $\alpha=2$ and in addition {\rev $\textbf{H}_3$ holds for some $\gamma>1$}, then 
{\rev
\begin{align*}
 \E[\|\bm{X}_{1/n}\|\wedge\|\bm{X}_{1/n}\|^2]
  &=%\Pb[\|\bm{X}_{1/n}\|\geq 1]+
  \E[\Indi{\{\|\bm{X}_{1/n}\|\leq 1\}}\|\bm{X}_{1/n}\|^2] + \E[\Indi{\{\|\bm{X}_{1/n}\|\geq 1\}}\|\bm{X}_{1/n}\|].
  \end{align*}
Now, using that 
$$\int_0^{\|\bm{X}_{1/n}\|} s ds = \frac{1}{2} \|\bm{X}_{1/n}\|^2, \qquad \int_0^{\|\bm{X}_{1/n}\|} ds = \|\bm{X}_{1/n}\| $$
we have

  \begin{align*}
\E[\|\bm{X}_{1/n}\|\wedge\|\bm{X}_{1/n}\|^2]
	&\leq %\kappa n^{-\gamma}+
 2\E\left[\int_{\R_{+}}s\Indi{\{s\leq \|\bm{X}_{1/n}\|\leq 1\}}ds\right] + \E\left[\int_{\R_{+}}\Indi{\{s \lor 1 \leq \|\bm{X}_{1/n}\| \}}ds\right]  \\
	&= %\kappa n^{-\gamma}+
 2 \int_{0}^1s\Pb[s\leq \|\bm{X}_{1/n}\|]ds + \int_{0}^\infty \Pb[\|\bm{X}_{1/n}\| \geq 1 \lor s]ds\\
	&\leq n^{-1}+
 2\int_{0}^1s(1\wedge(\kappa (n^{-1}s^{-2})^{\gamma}))ds + {\kappa} \int_0^\infty \Big(n^{-1}(1 \lor s)^{-2} \Big)^\gamma ds.
\end{align*}
The first integral on the right can be bounded as follows 
\begin{align*}
\int_{0}^1s(1\wedge(\kappa (n^{-1}s^{-2})^{\gamma}))ds
  &\leq n^{-1}\int_{0}^{\infty}u(1\wedge(\kappa u^{-2\gamma}))du,
\end{align*}
while the second is bounded by $n^{- \gamma}(\int_0^1 1 ds + \int_1^\infty s^{-2 \gamma} ds)$.
The result follows from the fact that $\gamma>1$. }

\end{proof}

\bibliographystyle{plain}
\bibliography{bibliography}
\end{document}